\newtheorem{theorem}{Theorem}[section]
\newtheorem{lemma}[theorem]{Lemma}
\newtheorem{proposition}[theorem]{Proposition}
\newtheorem{corollary}[theorem]{Corollary}
\newtheorem{claim}[theorem]{Claim}
\theoremstyle{definition}
\newtheorem{definition}[theorem]{Definition}
\newtheorem{remark}[theorem]{Remark} 
\def\C{\mathbb{C}} 
\def\R{\mathbb{R}}
\def\P{\mathbb{P}}
\def\Z{\mathbb{Z}}
\def\i{{\tt{i}}}
\def\del{\partial}
\def\id{\mathrm{id}}
\newcommand{\End}{\mathrm{End}}
\newcommand{\Hom}{\mathrm{Hom}}
\newcommand{\homscr}{{\mathscr{H}\! \! om}}
\def\({\left(}
\def\){\right)}
\def\<{\langle}
\def\>{\rangle}
\newcommand{\simeqto}{\xrightarrow{\sim}}
\newcommand{\pole}[1]{\ensuremath{(\!(#1)\!)} }
\newcommand{\conv}[1]{\ensuremath{(\!\{#1\}\!)}}
\newcommand{\cal}[1]{\mathcal{#1}}
\renewcommand{\scr}[1]{\mathscr{#1}}
\renewcommand{\frak}[1]{\mathfrak{#1}}
\def\GL{{\mathrm{GL}}}
\def\per{{\mathrm{per}}}
\def\mild{{\mathrm{mild}}}
\def\asy{{\mathrm{asy}}}
\def\gr{{\mathrm{gr}}}
\def\RH{{\mathrm{RH}}}
\def\DR{{\mathrm{DR}}}
\def\Per{{\mathrm{Per}}}
\def\Diffc{{\mathsf{Diffc}}}
\def\Stokes{{\mathsf{St(\scr{A}_\per)}}}
\def\preSt{{\mathsf{St}^{\rm pre}(\scr{A}_\per)}}
\def\aut{{\scr{A}ut}}
\title[Stokes structures of mild difference modules]
{Stokes structure of mild difference modules}
\author{Yota Shamoto }
\begin{document}
\begin{abstract}
    We introduce a category of filtered sheaves 
    on a circle to
    describe the Stokes phenomenon 
    of linear difference equations with mild singularity.
    The main result is a mild difference analog of 
    the Riemann-Hilbert correspondence 
    for germs of meromorphic connections 
    in one complex variable 
    by Deligne-Malgrange. 
\end{abstract}
\maketitle

\section{Introduction}
\subsection{Mild difference modules}
Let $\scr{O}_t\coloneqq\C\{t\}$ be the ring of 
convergent power series in a variable $t$.
Let $\mathscr{O}_t(*0)=\C\conv{t}$ be the 
quotient field. 
Let 
$\phi_t
$
be an automorphism on $\scr{O}_t(*0)$ defined as
$\phi_t(f)(t)\coloneqq f(\tfrac{t}{1+t})$.
If we set $s=t^{-1}$, 
we have $\phi_t(f)(s)=f(s+1)$. 
By \textit{a difference module} 
(over 
the difference field $(\scr{O}_t(*0),\phi_t)$),
we mean a
pair $(\scr{M},\psi)$
of a finite-dimensional 
$\scr{O}_t(*0)$-vector space $\scr{M}$ and 
an automorphism $\psi\colon \scr{M}\to \scr{M}$
of $\C$-vector spaces satisfying the relation
$\psi(fv)=\phi_t(f)\psi(v)$
for any $f\in \scr{O}_t(*0)$ and $v\in \scr{M}$.
% The automorphism $\psi$ is called the difference operator 
% of $\scr{M}$. 

There is a class of difference modules 
called \textit{mild} \cite{Galois}*{\S 9}.
A difference module is called \textit{mild} if 
it is isomorphic to a module of the form
$(\mathscr{O}_t(*0)^{\oplus r},
A(t)\phi_t^{\oplus r })$
where $A(t)$ has entries in $\mathscr{O}_t$
and the constant term $A(0)$ is invertible.

The purpose of this paper 
is to establish the Riemann-Hilbert correspondence for 
mild difference modules as an analog of that for 
germs of meromorphic connections 
in one complex variable 
by Deligne-Malgrange
\cites{Deligne,Malgrange}. 
See also \cite{Sabbah}. 

\subsection
{Stokes filtered
locally free sheaves
for mild difference modules}
To formulate the Riemann-Hilbert correspondence for
mild difference modules, 
we introduce the notion of 
Stokes filtered locally free sheaves 
for 
difference modules in \S \ref{SSt}. 
We explain the notion briefly, 
comparing with the 
case of meromorphic connections. 

In the case of germs of meromorphic connections, 
we consider the notion of 
a Stokes filtered local system on 
$S^1=\{z\in\C\mid |z|=1\}$.
It is a pair 
$(\cal{L},\cal{L}_\bullet)$
of a local system $\cal{L}$ of 
finite-dimensional $\C$-vector spaces on $S^1$
and a filtration $\cal{L}_{\bullet}$ 
on $\cal{L}$ indexed by a 
sheaf 
$\cal{I}=\bigcup_{m\geq 0}
z^{-\frac{1}{m}}\C[z^{-\frac{1}{m}}]$ 
of ordered abelian groups. 

The Deligne-Malgrange theorem claims that 
there is an equivalence 
(called the Riemann-Hilbert functor)  between 
the category of germs of meromorphic 
connections 
and the category of
Stokes filtered local systems
on $S^1$. 

When a Stokes filtered local system 
$(\cal{L},\cal{L}_\bullet)$
corresponds to germs of a meromorphic connection 
$\cal{M}$ by the Riemann-Hilbert functor,  
the sheaf $\cal{L}$ is regarded as 
the sheaf of flat sections of $\cal{M}$ on sectors
and 
the filtration $\cal{L}_{\bullet}$ 
describes the growth rate of 
the sections. The filtration $\cal{L}_\bullet$ 
is called 
the Stokes filtration on $\cal{L}$
since the relation between the splittings 
of $\cal{L}_\bullet$ on different domains 
describes the classical Stokes phenomenon
of the solutions of the differential equation 
associated to $\cal{M}$. 

In the case of difference modules, 
we consider a locally free sheaf $\scr{L}$
over a sheaf $\scr{A}_\per$
of rings over $S^1$. 
Here, $\scr{A}_\per$
is a sheaf of rings over $S^1$
defined as follows: 
\begin{align*}
    \scr{A}_\per(U)=
    \begin{cases}
    \C\conv{u^{-1}}&
    (U\subset (0,\pi),U\neq \emptyset)\\
    \C\conv{u}&(U\subset (-\pi,0),U\neq \emptyset)\\
    \C[u^{\pm 1}]
    &(U\cap\{e^{i\pi},e^0\}\neq\emptyset),
    \end{cases}
\end{align*}
where $U$ is assumed to be connected
and we set $(a,b)\coloneqq 
\{e^{\i\theta}\in S^1\mid a<\theta<b\}$ 
for $a,b\in \R$ with $a<b$.
If we put $u=\exp(2\pi\i t^{-1})$, 
we can regard $\scr{A}_\per$ as
a sheaf of rings of a certain 
class of  $\phi_t$-invariant 
(or, periodic with respect to  
$s\mapsto s+1$) functions 
(see \S \ref{Prel} for more details). 

Then, we define
a filtration $\scr{L}_{\bullet}$
on $\scr{L}$
indexed by a sheaf of ordered 
abelian groups.
We call it a Stokes filtration. 
It will be turned out 
that the filtration describes 
the growth rate of the solutions of 
the difference equation associated 
to the difference module. 
A new feature
of the filtration is the compatibility 
of the action of $u$ with the filtration:
\begin{align*}
    u\scr{L}_{\leqslant \frak{a}}
    =\scr{L}_{\leqslant \frak{a}+2\pi\i t^{-1}},
\end{align*}
where $\frak{a}$ is an arbitrary index
(see \S \ref{SSt} for more details).

It is worth mentioning that the Stokes filtered 
$\scr{A}_\per$-module
can be  non-graded
even if it
is rank one as a free 
$\scr{A}_\per$-module. 
This point will be explained in \S \ref{R1}.

\subsection{Main result}
Let $\Diffc^\mild$ be the category of 
mild difference modules.
Let $\Stokes$ be the category of 
the Stokes filtered locally free 
$\scr{A}_\per$-modules. 
Then, we can state the main result of the
present paper:
\begin{theorem}[Theorem \ref{Main}]\label{IMa}
There is a functor 
$\RH\colon \Diffc^\mild \to \Stokes$,
which is an equivalence of categories.
\end{theorem}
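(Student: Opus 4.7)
The plan is to model the argument on Malgrange's proof of the classical Deligne-Malgrange theorem, adapting each ingredient to the difference setting in which $\scr{A}_\per$ plays the role of the sheaf of asymptotic functions on $S^1$ and the indexing sheaf records the growth rate of sectorial periodic solutions of the difference equation. The whole strategy splits into (i) construction of $\RH$, (ii) full faithfulness, and (iii) essential surjectivity, with the third step carrying most of the weight.

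First, I would construct the functor $\RH$. Given a mild difference module $(\scr{M},\psi)$ with formal decomposition indexed by exponents $\frak{a}$, the mild hypothesis ensures that over each sufficiently small open arc $U\subset S^1$ the equation $\psi(v)=v$ admits a free module of sectorial periodic solutions of rank $r=\dim_{\scr{O}_t(*0)}\scr{M}$ over $\scr{A}_\per(U)$. Sheafifying produces a locally free $\scr{A}_\per$-module $\scr{L}$, and the Stokes filtration $\scr{L}_{\leqslant\frak{a}}$ is defined by imposing the growth bound controlled by $\exp(\frak{a})$ on such solutions. The built-in compatibility $u\scr{L}_{\leqslant\frak{a}}=\scr{L}_{\leqslant\frak{a}+2\pi\i t^{-1}}$ is a direct consequence of the identification $u=\exp(2\pi\i t^{-1})$. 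The analytic input here is an asymptotic existence theorem for mild difference equations (a Borel--Ritt style lifting), which should follow from elementary estimates on the recurrence $\psi(v)=Av$ once $A(0)$ is invertible.

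Second, to prove full faithfulness I would reduce to a global section computation. The internal Hom of two mild difference modules is again mild of rank $r_1r_2$, and on the Stokes side passes to filtered internal Hom; this reduces the statement to $\Hom(\mathbf{1},\scr{N})\simeq H^0(S^1,\scr{L}_{\leqslant 0})$ for the associated object. That identification is a uniqueness statement for sectorial periodic solutions with prescribed growth, which I expect to fall out of the asymptotic existence theorem used in step one together with the explicit description of $\scr{A}_\per$.

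The main obstacle is essential surjectivity. Given an abstract Stokes filtered locally free $\scr{A}_\per$-module $(\scr{L},\scr{L}_\bullet)$, I would first form the graded object $\gr\scr{L}_\bullet$; this is tautologically the image under $\RH$ of a canonical \emph{elementary} mild difference module that can be written down from the indexing data. Refinements of $\gr\scr{L}_\bullet$ to a genuine Stokes filtered object are then classified by a non-abelian cohomology set $H^1(S^1,\aut^{<0})$, where $\aut^{<0}$ is the sheaf of automorphisms of the graded model that respect the filtration and are infinitesimally trivial. On the difference side, the set of mild difference modules with the same formal type as the chosen elementary model is classified by a strictly analogous $H^1$ of automorphisms that are formally trivial. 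The task is to construct a bijection between these two $H^1$'s compatible with the functor from step one; this is the Malgrange--Sibuya type statement underlying the theorem. The principal difficulty is to prove this non-abelian cohomological equivalence in the present framework, where both the domain sheaf $\scr{A}_\per$ and the indexing sheaf are constrained by the periodicity $u\scr{L}_{\leqslant\frak{a}}=\scr{L}_{\leqslant\frak{a}+2\pi\i t^{-1}}$, so that the vanishing arguments familiar from the differential case must be reworked using estimates on periodic sectorial functions rather than on flat sections of connections.
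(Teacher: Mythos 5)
Your three-step plan --- construct $\RH$ from sectorial periodic solutions filtered by growth, reduce full faithfulness to a global-sections computation for the internal Hom, and deduce essential surjectivity by matching the $H^1$-classification of Stokes filtrations over a graded model with the $H^1$-classification of mild modules with a fixed formal type --- is exactly the paper's strategy, and your identification of the graded model via the exponents $\Sigma(\scr{L})$ and the classification by $H^1(S^1,\aut^{<0})$ on both sides mirrors Theorem \ref{ClSt} and Theorem \ref{ClMe}. The one organizational point you get slightly wrong is the last step: you present the bijection between the two non-abelian $H^1$ sets as a new Malgrange--Sibuya type statement to be proved by reworking the periodic vanishing estimates, but in the paper this bijection is free once full faithfulness is in hand, since applying $\RH$ to $\scr{E}nd(\scr{N})$ identifies the sheaves $\aut^{<0}(\scr{N})$ and $\aut^{<0}(\scr{G})$ outright; the genuine analytic input (the de Rham vanishing theorem, proved via Immink-style integral estimates, and the Braaksma--Faber asymptotic existence theorem) is spent on establishing full faithfulness and the difference-side Malgrange--Sibuya classification, not on a direct comparison of the two $H^1$'s.
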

This result is analogous to
that of Deligne-Malgrange 
\cites{Deligne,Malgrange}
(See 
\cite{Sabbah}*{Theorem 5.8}). 
The proof of this theorem is 
similar to that can be found 
in \cite{Sabbah}. 
The main difference is 
the definition of the functor 
$\RH$. 
See Remark \ref{Periodic} 
for more precise.

The author hopes that 
this result contributes 
to the intrinsic understanding of 
linear difference modules. 
In particular, 
it would be interesting to
use the result
to
describe the Stokes structure of
the Mellin transformation 
of a holonomic $\cal{D}$-module
concerning
recent progress 
\cites{Bloch, Local, GS,
garcia2018mellin} 
in the study 
of the Mellin transformations
(see Remark \ref{Mellin}).

\subsection{Outline of the paper}
In \S \ref{Prel},
we prepare some notions used throughout the paper.
In \S \ref{SSt},
we introduce the notion of Stokes filtered 
$\scr{A}_\per$-modules. 
In \S \ref{RH&RS}, 
we formulate and prove the main theorem
assuming a theorem
proved in \S \ref{PRH}. 
% In \S \ref{}, 
% we consider the wild case with a simple example. 
\subsection*{Acknowledgement}
The author would like to express his deep gratitude to 
Claude Sabbah, who gave the author fruitful comments on 
preliminary versions of this paper. 
The author also would like to thank
Tatsuki Kuwagaki,
Takuro Mochizuki, 
Fumihiko Sanda, and Takahiro Saito 
for discussions and encouragement 
in many occasions.
 The author is supported by JSPS KAKENHI Grant Number JP
 20K14280. 
 \section{Preliminaries}\label{Prel}
In this section,
we prepare some notions used throughout the paper. 
\subsection{An automorphism on a projective line }\label{Notations}
Let $\C$ be the set of complex numbers.
Set $\C^*\coloneqq \C\setminus \{0\}$. 
Natural inclusion is denoted by 
$\jmath\colon \C^*\to \C$. 
When we distinguish a variable such as $t$,
we use the symbols $\C_t$ and $\C^*_t$. 
Let 
%\begin{align*}
    $S^1=\{e^{\i\theta}\in\C\mid \theta\in \R\}$
%\end{align*}
be the unit circle, 
where we set $\i=\sqrt{-1}$. 
For two real numbers $a,b$ with $a<b$, 
we set $(a,b)\coloneqq \{e^{\i\theta}\in S^1\mid a<\theta<b\}$.
\subsubsection{Real blowing up}\label{RBU}
We set
\begin{align*}
    \widetilde{\C}=\{(t,e^{\i\theta})\in \C\times S^1\mid 
    t=|t|e^{\i\theta}\},
\end{align*}
which is called \textit{the real blowing up of $\C$ at the origin.}
When we distinguish a variable such as $t$, 
we use the notation $\widetilde{\C}_t$. 
There are maps
%\begin{align*}
    $\varpi\colon \widetilde{\C}\longrightarrow \C$,
    %;\
    %(t,e^{\i\theta})\mapsto t.$
    $\widetilde{\jmath}\colon \C^*\hookrightarrow 
    \widetilde{\C}$,
    and $\widetilde{\imath}\colon S^1 \hookrightarrow \widetilde{\C}$
%\end{align*}
defined by $\varpi(t,e^{\i\theta})=t$, $\widetilde{\jmath}(t)=(t,t/|t|)$,
and $\widetilde{\imath}(e^{\i\theta})=(0,e^{\i\theta})$, respectively. 
We sometimes denote the boundary of $\widetilde{\C}_t$
by $S^1_t$ to distinguish a variable 
such as $t$.

\subsubsection{Unit disc}\label{Unit}
Let $\Delta\coloneqq\{t\in\C\mid |t|<1\}$
be a unit open disc.
We set $\Delta^*\coloneqq \Delta\setminus \{0\}$ and 
%\begin{align*}
    $\widetilde{\Delta}\coloneqq \{(t,e^{\i\theta})\in \widetilde{\C}
    \mid t\in \Delta\}.$
%\end{align*}
Let $\varpi_\Delta\colon \widetilde{\Delta}\to\Delta$
be the projection. 
When we distinguish a variable such as $t$,
we use the notations $\Delta_t$, $\Delta_t^*$ and $\widetilde{\Delta}_t$. 
The natural inclusions 
are denoted by 
$\jmath_\Delta\colon \Delta^*\to \Delta$, 
$\widetilde{\imath}_\Delta\colon S^1\to \widetilde{\Delta}$,
and 
$\widetilde{\jmath}_\Delta\colon 
\Delta^*\to \widetilde{\Delta}$.
%
% 
%\subsubsection{Shift operator}\label{Shift}
Let $\varphi_t\colon \Delta_t\to \C_t$ be a holomorphic function
defined as
\begin{align*}
    \varphi_t(t)\coloneqq \frac{t}{1+t}.  
\end{align*}
%which we call \textit{the shift operator}. 
The map uniquely extends
to a continuous map 
$\widetilde{\varphi}_t\colon \widetilde{\Delta}_t
\longrightarrow \widetilde{\C}_t$.

\subsubsection{Another coordinate}\label{Another}
Let $\C_s$ be a complex plane 
with a coordinate $s$. 
When we use 
the two complex variables $s$ and $t$,
we implicitly assume the relation
\begin{align}\label{s=t}
    s=t^{-1}. 
\end{align}
In other words, 
we consider the complex projective line $\P^1$
covered by two open subsets $\C_s$ and $\C_t$
with the relation \eqref{s=t}. 
%The point $\infty\in \P^1$ corresponds to $\{t=0\}$. 
Let $\varphi_s\colon \C_s\to \C_s$
be the map defined as
\begin{align*}
    \varphi_s(s)=s+1. 
\end{align*}
Under the  relation \eqref{s=t}, 
the map $\varphi_s$ 
coincides with the map $\varphi_t$ 
on the domain $\Delta_t^*=\{s\in \C_s \mid |s|>1\}$. 
Hence the maps $\varphi_s$ and $\varphi_t$ are glued to 
an automorphism on $\P^1$.

\subsection{Sheaves of periodic functions} 
For a point $x$ in a topological space 
$X$ and a sheaf $\scr{F}$
on $X$, 
let $\scr{F}_x$ denote the set of germs of $\scr{F}$ at $x$
(with some structure).
For a continuous map $f\colon X\to Y$ between topological spaces, 
$f_*$ denotes the pushing forward of sheaves 
and $f^{-1}$ denotes the pull back of sheaves. 
If
$X$ is a complex manifold,
let $\scr{O}_X$ denote the sheaf of holomorphic functions on $X$. 
If $X$ is a Riemann surface
and $D$ is a finite set of points,
then let $\scr{O}_X(*D)$ denote the sheaf
of meromorphic functions on $X$
whose poles are contained in $D $.

\subsubsection{Functions with fixed asymptotic behavior}
\label{Sheaves}
Using the notations in \S \ref{RBU}, set
\begin{align*}
    \widetilde{\scr{O}}
    \coloneqq 
    \widetilde{\imath}^{-1}\widetilde{\jmath}_*\scr{O}_{\C^*},
\end{align*}
which is a sheaf on $S^1$. 
There are subsheaves $\scr{A}^{\leqslant 0}$, and $\scr{A}^{<0}$
in $\widetilde{\scr{O}}$ characterized by 
their asymptotic behavior as follows 
(see \cite{Sabbah} for precise definitions):
\begin{itemize}
    %  \item $\scr{A}$ is the sheaf of holomorphic functions 
    %  \textit{which have asymptotic expansions}. 
    \item $\scr{A}^{\leqslant 0}$ is the sheaf of holomorphic functions 
    \textit{which are of moderate growth.} 
    \item $\scr{A}^{<0}$ is the sheaf of holomorphic functions 
    \textit{which are of rapid decay}. 
\end{itemize}
To emphasize the coordinate function such as $t$,
 we use the notation $\widetilde{\scr{O}}_t$, $\scr{A}_t^{\leqslant 0}$, e.t.c.

\subsubsection{Periodic functions}\label{PER}
The map
$\widetilde{\varphi}_t\colon\widetilde{\Delta}_t
\to \widetilde{\C}_t$ 
defined in \S \ref{Unit} naturally
induces a morphism 
%\begin{align*}
   $ \widetilde{\varphi}_t^*\colon \widetilde{\jmath}_*\scr{O}_{\C^*}
    \longrightarrow
    \widetilde{\varphi}_{t*}\widetilde{\jmath}_{\Delta*}\scr{O}_{\Delta^*}$
%\end{align*}
defined as the composition with the map $\varphi$ restricted to
a suitable open subset. 
We then set 
\begin{align*}
    \widetilde{\phi}_t\coloneqq
    \widetilde{\imath}^{-1}(\widetilde{\varphi}_t^*)
    \colon \widetilde{\scr{O}}_t\to \widetilde{\scr{O}}_t,
\end{align*}
where we used the relation
$\widetilde{\varphi}_t\circ\widetilde{\imath}_\Delta=\widetilde{\imath}$. 
Note that 
the subsheaves
$\scr{A}^{\leqslant0}_t$
and $\scr{A}^{<0}_t$
are invariant under the automorphism $\widetilde{\phi}_t$.
We then set
$\nabla_{\widetilde{\phi}_t}\coloneqq \widetilde{\phi}_t-\id_{\widetilde{\scr{O}}}.$
The restrictions 
of $\nabla_{\widetilde{\phi}_t}$
to  $\scr{A}^{\leqslant 0}_t$ and $\scr{A}^{<0}_t$
are also denoted by the same symbol.
Set 
\begin{align}\label{u}
    u\coloneqq \exp(2\pi\i s)=\exp(2\pi\i t^{-1})
\end{align}
and $v=u^{-1}$. 
Then, we have the equality $\nabla_{\widetilde{\phi}_t}(u)=0$. 

\begin{definition}[Sheaves of periodic functions]
Let $\widetilde{\scr{O}}_{\per}$,
$\scr{A}^{< 0}_\per$,
and ${\scr{A}}_\per^{\leqslant 0}$
denote the kernel of 
the operator $\nabla_{\widetilde{\phi}_t}$ on 
$\widetilde{\scr{O}}$,
$\scr{A}^{< 0}$
and ${\scr{A}}^{\leqslant 0}$, respectively. 
\end{definition}

\begin{lemma}[c.f. {\cite{Galois}*{p.117-118}}]
For a non-empty connected open subset 
$U\subset S^1_t$, 
we have the following descriptions of 
$\widetilde{\scr{O}}_\per(U)$, 
$\scr{A}^{\leqslant 0}_\per(U)$, 
and
$\scr{A}^{<0}_\per(U)$$:$ 
\begin{itemize}
    \item If $U\subset (0,\pi)$, then 
    \begin{align*}
        \widetilde{\scr{O}}_\per(U)=(\jmath_*\scr{O}_{\C_{v}^*})_0,
        \quad
        \scr{A}_\per^{\leqslant 0}(U)=\scr{O}_{v},
        \quad 
        \scr{A}_\per^{<0}(U)=v\scr{O}_{v}.
        % \quad 
        % \scr{A}_\per=\C\conv{v}. 
    \end{align*}
    \item If $U\subset (-\pi,0)$, then
    \begin{align*}
        \widetilde{\scr{O}}_\per(U)=(\jmath_*\scr{O}_{\C_{u}^*})_0,
        \quad
        \scr{A}_\per^{\leqslant 0}(U)=\scr{O}_{u},
        \quad 
        \scr{A}_\per^{<0}(U)=u\scr{O}_{u}.
        % \quad 
        % \scr{A}_\per=\C\conv{u}. 
    \end{align*}
    \item If $e^0\in U$ or $e^{\pi\i}\in U$, then 
    \begin{align*}
        \widetilde{\scr{O}}_\per(U)=\scr{O}_{\C_{u}}(\C_u^*),
        \quad
        \scr{A}_\per^{\leqslant 0}(U)=\C,
        \quad 
        \scr{A}_\per^{<0}(U)=0.
        % \quad 
        % \scr{A}_\per=\C\conv{u}. 
    \end{align*}
\end{itemize}
Here, the rings $(\jmath_*\scr{O}_{\C_u})_0$, 
$(\jmath_*\scr{O}_{\C_v})_0$,
$\scr{O}_{u}\coloneqq\scr{O}_{\C_u,0}$, 
and $\scr{O}_{v}\coloneqq\scr{O}_{\C_v,0}$
are naturally embedded into $\widetilde{\scr{O}}(U)$
in each case
via the relations \eqref{u} and $v=u^{-1}$. 
\end{lemma}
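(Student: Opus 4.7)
The plan is to exploit the change of variable $s = t^{-1}$, under which $\widetilde{\phi}_t$ corresponds to the shift $s \mapsto s + 1$, and then to descend along the covering map $s \mapsto u = \exp(2\pi\i s)$ (so $v = u^{-1} = \exp(-2\pi\i s)$). A section of $\widetilde{\scr{O}}$ on $U$ is represented by a holomorphic function $f$ on a set $\Omega_U = \{0 < |t| < \epsilon,\ t/|t|\in U\}$; setting $g(s) := f(1/s)$, the periodicity $\widetilde{\phi}_t f = f$ becomes $g(s+1) = g(s)$ on the inverted sector $\widetilde{\Omega}_U = \{|s| > 1/\epsilon,\ \arg(s) \in -U\}$. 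Since $|u| = \exp(-2\pi\,\mathrm{Im}(s))$, the three cases in the statement correspond respectively to $\mathrm{Im}(s) < 0$ (so $|v| \to 0$), $\mathrm{Im}(s) > 0$ (so $|u| \to 0$), and $\mathrm{Im}(s) \in \R$ (where both occur).

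For $U \subset (0, \pi)$, the horizontal slices of $\widetilde{\Omega}_U$ at fixed $y := \mathrm{Im}(s) < 0$ are intervals in $x := \mathrm{Re}(s)$ whose length grows linearly with $|y|$; for $y < -Y_0$ this length exceeds $1$, so the $\Z$-translates of $\widetilde{\Omega}_U$ saturate the half-strip $\{y < -Y_0\}$. Thus $g$ extends to this half-strip and descends via $s \mapsto v$ to a holomorphic function $h(v)$ on a punctured neighborhood of $v = 0$, identifying $\widetilde{\scr{O}}_\per(U)$ with $(\jmath_*\scr{O}_{\C_v^*})_0$. The moderate growth bound $|f(t)| \leq C|t|^{-N}$ becomes $|g(s)| \leq C|s|^N$; choosing in each $\Z$-orbit a representative with $|x| \leq K|y|$ (which exists once a full period fits inside the sector) yields $|h(v)| \leq C'|\log|v||^N$, and Riemann's removable singularity theorem gives $h \in \scr{O}_v$. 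Rapid decay similarly forces $|h(v)| = O(|\log|v||^{-N})$ for all $N$, so $h(0) = 0$ and $h \in v\scr{O}_v$. The case $U \subset (-\pi, 0)$ is entirely symmetric with $u$ in place of $v$.

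For the third case ($e^0 \in U$ or $e^{\i\pi} \in U$), the arc $-U \subset S^1$ contains the image of $0$ (resp.\ $\pi$) in its interior, so for every $y \in \R$ the horizontal slice of $\widetilde{\Omega}_U$ at height $y$ is a non-empty unbounded interval in $x$; its $\Z$-translates cover $\R$. Hence $g$ extends to all of $\C_s$ (outside a bounded set) and descends to a holomorphic function $h$ on all of $\C_u^*$, giving the first identification. The previous logarithmic bound applies near both $u = 0$ and $u = \infty$, so $h$ extends holomorphically across both, is therefore constant on $\P^1$, and is forced to vanish under rapid decay.

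The main technical point I foresee is the saturation argument: verifying in each case that $\bigcup_{n \in \Z}(\widetilde{\Omega}_U + n)$ covers the preimage under $s \mapsto u$ of the target domain (a punctured neighborhood of $0 \in \C_v$, a punctured neighborhood of $0 \in \C_u$, or all of $\C_u^*$). Once this geometric fact is in hand, the ring identifications are immediate from the covering, and the growth statements reduce to standard removable-singularity arguments via the translation $|s| = O(|\log|u||)$ forced by periodicity.
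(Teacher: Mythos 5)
Your proof is correct and follows essentially the same route as the paper's: change to the variable $s = t^{-1}$ so that $\widetilde{\phi}_t$ becomes the shift $s \mapsto s+1$, observe $|u| = \exp(-2\pi\,\mathrm{Im}(s))$, and use periodicity to descend to the $u$- (or $v$-) plane, with the growth conditions translated into removable-singularity statements. The paper phrases the descent as the assertion that the images $\exp(2\pi\i(\cal{U}\setminus U))$ form a fundamental system of neighborhoods of the relevant point; your explicit saturation-by-$\Z$-translates argument is the unpacked version of that claim, and the logarithmic bound $|h(v)| \lesssim |\log|v||^N$ you derive is exactly what the paper leaves implicit in the remark that $v$ rapidly decays on $\cal{U}$.
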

\begin{proof}
For a section 
$[f]\in \widetilde{\scr{O}}_\per(U)$,
there exists a representative
$f\in\widetilde{\scr{O}}(\cal{U})$ 
on an open subset 
$\cal{U}\subset \widetilde{\C}$ 
with $\del\widetilde{\C}\cap \cal{U}=U$. 
We may assume that we have
$f(s)=f(s+1)$ for 
$s\in \cal{V}\coloneqq 
\cal{U}\cap\widetilde{\varphi}^{-1}
(\cal{U})$.
It then follows 
that 
$f$ can be regarded as a 
holomorphic function on
$\exp(2\pi\i(\cal{V}\setminus U))\subset\C^*$. 
Since we have
\begin{align*}
&|u|=\exp(2\pi|s|\Re(\i e^{\i\arg (s)})),
\\
&\arg(u)=|s|\Im (2\pi\i e^{\i\arg (s)}) \mod 2\pi
\end{align*}
for $u=\exp(2\pi\i s)$,
if $U$ is an open interval in $(0,\pi)$, 
then the family 
$\{\infty\}\cup\exp(2\pi\i(\cal{U}\setminus U))$
of open subsets for 
$\cal{U}\subset \widetilde{\C}$ 
with $\del\widetilde{\C}\cap \cal{U}=U$
defines a fundamental system of neighborhoods around the
infinity of the $u$-plane,
which implies $\widetilde{\scr{O}}_\per(U)
=(\jmath_*\scr{O}_{\C_{v}^*})_0$. 
Since $v=u^{-1}$ rapidly decays on $\cal{U}$,
we also have 
$\scr{A}_\per^{\leqslant 0}(U)=\scr{O}_{v}$, 
and $\scr{A}_\per^{<0}(U)=v\scr{O}_{v}$. 
The other part of the lemma can be proved in 
a similar way. We left them to the reader.   
\end{proof}

\begin{definition}
We set 
%\begin{align*}
   $ \scr{A}_\per\coloneqq \sum_{n\in\Z}u^n\scr{A}^{\leqslant 0}_\per 
   \subset\widetilde{\scr{O}}$.
%\end{align*}
\end{definition}
Let $\C[u^{\pm 1}]$ denote the ring of Laurent polynomials in $u$.
\begin{corollary}
For a connected open subset $\emptyset\neq U\subset S^1_t$, 
we have the following:
\begin{align*}
    \scr{A}_\per(U)=\begin{cases}
    \scr{O}_v(*0) & (U\subset (0,\pi))\\
    \scr{O}_u(*0) & (U\subset (-\pi,0))\\
    \C[u^{\pm 1}] & (U\cap\{e^0,e^{\pi\i}\}\neq \emptyset)
    \end{cases}
\end{align*}
where we set $\scr{O}_{w}(*0)\coloneqq \scr{O}_{\C_w}(*\{0\})_0$
for $w=u,v$.  \qed
\end{corollary}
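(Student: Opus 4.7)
The plan is to unwind the definition $\scr{A}_\per = \sum_{n\in\Z} u^n \scr{A}^{\leqslant 0}_\per$ on sections over $U$ and invoke the preceding lemma, which already supplies $\scr{A}^{\leqslant 0}_\per(U)$ in each of the three cases. Since taking sections over a fixed open set commutes with the sum of subsheaves, it suffices to compute $\sum_{n\in\Z} u^n \scr{A}^{\leqslant 0}_\per(U)$ inside $\widetilde{\scr{O}}(U)$, using the identifications of $\scr{A}^{\leqslant 0}_\per(U)$ with $\scr{O}_v$, $\scr{O}_u$, or $\C$ provided by the lemma, together with the relation $v = u^{-1}$.

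For $U\subset (0,\pi)$, the lemma identifies $\scr{A}^{\leqslant 0}_\per(U)$ with $\scr{O}_v = \scr{O}_{\C_v,0}$. Hence
\[
\scr{A}_\per(U) \;=\; \sum_{n\in\Z} u^n\, \scr{O}_v \;=\; \scr{O}_v[u] \;=\; \scr{O}_v[v^{-1}],
\]
which by definition is $\scr{O}_v(*0)$. The case $U\subset(-\pi,0)$ is completely symmetric: substituting $\scr{A}^{\leqslant 0}_\per(U)=\scr{O}_u$ and localizing at $u$ yields $\scr{O}_u(*0)$. In the third case, where $U$ meets $\{e^0,e^{\pi\i}\}$, the lemma forces $\scr{A}^{\leqslant 0}_\per(U)=\C$, and consequently $\scr{A}_\per(U) = \sum_{n\in\Z} u^n\,\C = \C[u^{\pm 1}]$.

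There is no real obstacle here; the corollary is a purely formal consequence of the preceding lemma. The only mildly delicate point is bookkeeping about which of $u$ and $v$ decays on which arc: on $U\subset(0,\pi)$ it is $v$, not $u$, that is of rapid decay, so $u$ itself is not a section of $\scr{A}^{\leqslant 0}_\per(U)$, but it does lie in $\scr{A}_\per(U)$ via the $n=1$ summand, and this is precisely what produces the pole at $v=0$ that promotes $\scr{O}_v$ to $\scr{O}_v(*0)$. With that remark in place the three case-by-case identifications are immediate.
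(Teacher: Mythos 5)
Your proposal is correct and is precisely the (omitted) argument the paper leaves to the reader — the corollary is stated with a bare \qed, so the intended route is exactly this: substitute the lemma's identification of $\scr{A}^{\leqslant 0}_\per(U)$ into the definition $\scr{A}_\per = \sum_{n\in\Z} u^n\scr{A}^{\leqslant 0}_\per$ and use $u=v^{-1}$. The one place where you are slightly glib is the claim that taking sections over $U$ commutes with the sheaf-theoretic sum $\sum_n u^n\scr{A}^{\leqslant 0}_\per$; this is not automatic for an infinite sum of subsheaves, since $\scr{A}_\per(U)$ is a priori the set of sections locally expressible as finite sums. It does hold here because, on any connected $U$ inside one of the three arcs, the lemma shows $\scr{A}^{\leqslant 0}_\per|_U$ is a constant sheaf ($\scr{O}_v$, $\scr{O}_u$, or $\C$), the family $u^n\scr{A}^{\leqslant 0}_\per$ is directed by inclusion, and the Laurent expansion in $u$ (or $v$) of a section is unique — so a section that is locally a finite Laurent polynomial is globally one over connected $U$. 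With that observation made explicit, the three computations you give are complete and match the paper's intent.
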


\subsection{Finite maps}\label{Ram}
Fix a positive integer $m$. 
Let $\rho\colon \C_\tau\to \C_t$ be the $m$-th power map. 
In other words, we consider another parameter $\tau$ with the relation
\begin{align}
    \tau^m=t. 
\end{align}
There exists a real blow-up 
$\widetilde{\rho}\colon \widetilde{\C}_\tau
    \longrightarrow \widetilde{\C}_t$
of $\rho$ defined as $\widetilde{\rho}(\tau,e^{\i \vartheta})\coloneqq 
(\rho(\tau),e^{\i m\vartheta})$. 
The restriction of $\widetilde{\rho}$
to the boundary is denoted by ${\rho}\colon S^1_\tau \to S^1_t$
or $\rho_m$ if we emphasize the dependence on $m$. 
The multiplicative group of the $m$-th roots of unity
\begin{align*}
    \mu_m\coloneqq \{\zeta\in\C\mid \zeta^m=1\}. 
\end{align*}
is regarded as the group of automorphisms on $S^1_\tau$ over $S^1_t$
in a natural way. 
Let $\sigma_\zeta\colon S^1_\tau\to S^1_\tau$
be the automorphisms corresponding to $\zeta\in\mu_m$.

\section{Stokes filtered locally free sheaves}\label{SSt}
In this section, 
we introduce the notion of a
Stokes filtered locally free sheaves over 
the sheaf $\scr{A}_\per$ of rings, 
which will be called a 
 Stokes filtered 
$\scr{A}_\per$-modules.
Concrete examples of Stokes filtered 
$\scr{A}_\per$-modules will be given 
in the next section.

\subsection{Sheaves of indexes}
In this subsection,
we prepare some sheaves of ordered abelian groups. 
They will be used to 
define filtrations on $\scr{A}_\per$-modules. 

\subsubsection{A sheaf on $S^1_\tau$}
Fix $m\in \mathbb{Z}_{>0}$. 
Recall the notations in \S \ref{Ram}
such as $\tau$ 
 and $S^1_\tau$
 % $\rho\colon S^1_\tau\to S^1_t$ 
corresponding to $m$.
We recall that there is a sheaf 
$\widetilde{\scr{O}}_{\tau}$ on ${S}^1_{\tau}$ defined in \S \ref{Sheaves}.

\begin{definition}\label{qtau}
Let $\scr{I}_{m,\tau}\subset \widetilde{\scr{O}}_\tau$
be a subsheaf of $\C$-vector spaces
locally generated by the sections 
represented by
the functions of the form
\begin{align}\label{q(tau)}
    \frak{a}(\tau)=\sum_{\ell=1}^mc_\ell \tau^{-\ell}
\end{align}
where  
$c_1,\dots,c_m\in \C$. 
\end{definition}
\begin{remark}
    The motivation for introducing the sheaf
    $\scr{I}_{m,\tau}$ 
    comes from 
    the definition
    of mild difference modules (Definition \ref{E&U}). 
\end{remark}

\subsubsection{Sheaves on $S^1_t$}
Let $\rho\colon S^1_\tau\to S^1_t$ be the map defined in 
\S \ref{Ram}. 
Let 
$\rho^*\colon \widetilde{\scr{O}}_t\to \rho_*\widetilde{\scr{O}}_\tau$
be the adjoint morphism. 
We regard $\widetilde{\scr{O}}_t$ as a 
subsheaf of $\rho_*\widetilde{\scr{O}}_\tau$ 
by this $\rho^*$. 
\begin{definition}
For a positive integer $m$, we set
\begin{align*}
    \scr{I}_{m}
    \coloneqq \rho_{*}\scr{I}_{m,\tau}\cap \widetilde{\scr{O}}_t
    \subset\rho_*\widetilde{\scr{O}}_\tau.
\end{align*}
We then set 
$\scr{I}\coloneqq \bigcup_{m=1}^\infty \scr{I}_m. $
\end{definition}

\subsubsection{Orders on local sections}
We shall define partial orders 
on the space of sections of sheaves introduced above. 
\begin{definition}
For an open subset
$U\subset S^1_t$ 
and sections $\frak{a},\frak{b}\in \scr{I}(U)$, 
we define 
\begin{align*}
    \frak{a}\leqslant_U\frak{b}&\Longleftrightarrow 
    \exp(\frak{a}-\frak{b})\in \scr{A}^{\leqslant 0}(U), \text{ and }\\
    \frak{a}<_U\frak{b}&\Longleftrightarrow 
    \exp(\frak{a}-\frak{b})\in \scr{A}^{< 0}(U).
\end{align*} 
\end{definition}

\subsection{Stokes filtered $\scr{A}_\per$-modules}
\subsubsection{Pre-Stokes filtrations}\label{DefSt}
We shall define the notion of a pre-Stokes filtration on
a $\scr{A}_{\per}$-module as follows. 
\begin{definition}\label{Pre-St}
Let $\scr{L}$ be a $\scr{A}_\per$-module.  
\textit{A pre-Stokes filtration} on 
$\scr{L}$ 
is a family 
\begin{align*}
    \scr{L}_\bullet\coloneqq 
    \{\scr{L}_{\leqslant \frak{a}}\subset \scr{L}_{|U}\mid
    U\subset S^1\colon\text{open subset},\ \frak{a}\in \scr{I}(U)\}
\end{align*}
of $\scr{A}_{\per|U}^{\leqslant 0}$-submodules in  
$\scr{L}_{|U}$ for all open subsets $U$ with the following properties:
\begin{enumerate}
    \item If $\frak{a}_{|V}=\frak{b}$ for $V\subset U$, $\frak{a}\in \scr{I}(U)$, and 
    $\frak{b}\in \scr{I}(V)$,
    then 
    $\scr{L}_{\leqslant \frak{a}|V}=\scr{L}_{\leqslant \frak{b}}.$
    \item If $\frak{a}\leqslant_U\frak{b}$ for 
          $\frak{a},\frak{b}\in \scr{I}(U)$,
          then 
          $\scr{L}_{\leqslant \frak{a}}\subset 
          \scr{L}_{\leqslant \frak{b}}.$
    \item For any $n\in \Z$ and $\frak{a}\in \scr{I}(U)$,
          we have
         % \begin{align*}
              $u^n\scr{L}_{\leqslant \frak{a}}
              =\scr{L}_{\leqslant \frak{a}+2n\pi\i s}.$ 
         % \end{align*}
\end{enumerate}
\end{definition}
A pair $(\scr{L},\scr{L}_\bullet)$ of a $\scr{A}_{\per}$-module
and a pre-Stokes filtration on it is called
\textit{a pre-Stokes filtered $\scr{A}_{\per}$-module}. 
A \textit{morphism} between two pre-Stokes filtered 
$\scr{A}_{\per}$-modules 
$(\scr{L},\scr{L}_\bullet)$, 
and $(\scr{L}',\scr{L}_\bullet')$
is a morphism $\lambda\colon \scr{L}\to\scr{L}'$
of $\scr{A}_{\per}$-modules
such that 
$\lambda_{|U}(\scr{L}_{\leqslant\frak{a}})\subset 
\scr{L}'_{\leqslant \frak{a}}$
for any $\frak{a}\in \scr{I}{(U)}$. 

\subsubsection{Grading}
For each $\frak{a}\in\scr{I}(U)$
and a 
pre-Stokes filtered $\scr{A}_{\per}$-module
$\scr{L}=(\scr{L},\scr{L}_\bullet)$,
we set
\begin{align*}
    \scr{L}_{<\frak{a}}
    \coloneqq 
    \sum_{\frak{b}<_U\frak{a}}
    \scr{L}_{\leqslant \frak{b}}
\end{align*}
and 
$\gr_\frak{a}\scr{L}\coloneqq 
\scr{L}_{\leqslant \frak{a}}/\scr{L}_{<\frak{a}}$.
By condition (3) in the Definition,  
we have the inclusion 
$\scr{A}^{<0}_{|U}\cdot 
\scr{L}_{\leqslant \frak{a}}\subset 
\scr{L}_{<\frak{a}}.$ 
Hence it is natural to regard 
$\gr_\frak{a}\scr{L}$ as 
a module over   
$\scr{A}^{\leqslant 0}_{\per|U}/\scr{A}_{\per|U}^{<0} 
=\C_U$.
Again by condition (3),
we have an isomorphism 
\begin{align}\label{UN}
    u^n\colon \gr_{\frak{a}}\scr{L}\simeqto \gr_{\frak{a}+2\pi\i s n}\scr{L}
\end{align}
for any $n\in \Z$. 
Hence $\bigoplus_{\mathfrak{a}\in \scr{I}(U)}
    \gr_\frak{a}\scr{L}$
is naturally equipped with the structure of a sheaf
of $\C[u^{\pm 1}]$-modules over $U$.  
Then the sheaf $\Sigma(\scr{L})$ of sets
defined as 
\begin{align*}
    \Sigma(\scr{L})(U)\coloneqq 
    \{\frak{a}\in \scr{I}(U)\mid \gr_{\frak{a}}\scr{L}\neq 0\},
\end{align*}
is equipped with the 
the action of the additive group 
$2\pi\i s\Z_{S^1}$ 
in a natural way. 

\begin{lemma}\label{grLemma}
Let $(\scr{L},\scr{L}_\bullet)$ be 
a pre-Stokes filtered 
$\scr{A}_{\per}$-module.
There exists a
unique sheaf $\gr\scr{L}$ of
$\C[u^{\pm 1}]$-modules 
such that for any open subset 
$U\subset S^1$,
 we have
\begin{align*}
    \gr\scr{L}_{|U}=
    \bigoplus_{\mathfrak{a}\in \scr{I}(U)}
    \gr_\frak{a}\scr{L}.
\end{align*}
Moreover, 
$\gr\scr{L}$ is a local system of finitely generated 
free $\C[u^{\pm 1}]$-modules 
if and only if the following conditions %$(1)$ and $(2)$
are satisfied:% for any open subset $U\subset S^1_t$$:$ 
\begin{itemize}
    \item For each $\frak{a}\in \scr{I}(U)$, 
          the sheaf $\gr_\frak{a}\scr{L}$
          is a local system of  
          $\C$-vector spaces. 
    \item The sheaf $\Sigma(\scr{L})$
          has locally finitely many 
          $2\pi\i s \Z_{S^1}$-orbits. 
\end{itemize}
\end{lemma}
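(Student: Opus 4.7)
The plan is to first construct the sheaf $\gr\scr{L}$ together with its $\C[u^{\pm 1}]$-structure, and then treat each direction of the characterization separately. For the construction, I would note that for each $\frak{a}\in\scr{I}(U)$ the quotient $\gr_\frak{a}\scr{L}$ is a sheaf of $\C_U$-vector spaces by the remarks after Definition \ref{Pre-St}, and that condition (1) of Definition \ref{Pre-St} yields the crucial compatibility $\gr_\frak{a}\scr{L}|_V=\gr_{\frak{a}|_V}\scr{L}$ whenever $V\subset U$. This lets me define $\gr\scr{L}$ either as the sheaf with stalks $\bigoplus_{\frak{a}\in\scr{I}_x}(\gr_\frak{a}\scr{L})_x$ or, equivalently, as the sheafification of $U\mapsto\bigoplus_{\frak{a}\in\scr{I}(U)}(\gr_\frak{a}\scr{L})(U)$, with uniqueness immediate from the stalk description. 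The $\C[u^{\pm 1}]$-module structure is then induced by assembling the isomorphisms (\ref{UN}) into an action of $u^{\pm 1}$ that permutes the summands along each $2\pi\i s\Z_{S^1}$-orbit of $\scr{I}$.

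For the ``if'' direction of the characterization, I would fix a point $x\in S^1$ and pick a connected neighborhood $U$ on which $\Sigma(\scr{L})(U)$ decomposes as a finite disjoint union of $2\pi\i s\Z$-orbits with representatives $\frak{a}_1,\dots,\frak{a}_k\in\scr{I}(U)$. Regrouping the summands of $\gr\scr{L}|_U$ by orbits and using (\ref{UN}) to factor out the $u^n$-action yields an isomorphism
\begin{align*}
    \gr\scr{L}|_U\simeq\bigoplus_{i=1}^{k}\bigoplus_{n\in\Z}\gr_{\frak{a}_i+2\pi\i s n}\scr{L}\simeq\bigoplus_{i=1}^{k}\gr_{\frak{a}_i}\scr{L}\otimes_{\C_U}\C[u^{\pm 1}],
\end{align*}
where the second isomorphism sends $v\in\gr_{\frak{a}_i+2\pi\i s n}\scr{L}$ to $(u^{-n}v)\otimes u^n$. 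After further shrinking $U$ so that each $\gr_{\frak{a}_i}\scr{L}|_U$ becomes a constant sheaf with a finite-dimensional $\C$-stalk, the right-hand side is visibly a constant sheaf of finitely generated free $\C[u^{\pm 1}]$-modules.

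For the ``only if'' direction, suppose $\gr\scr{L}$ is locally constant with finitely generated free $\C[u^{\pm 1}]$-stalks. Grouping the summands $(\gr_\frak{a}\scr{L})_x$ by $2\pi\i s\Z$-orbit of indices produces a $\C[u^{\pm 1}]$-module decomposition of each stalk; finite generation forces only finitely many orbits to contribute locally, giving the second condition. Each $\gr_\frak{a}\scr{L}$ then arises as a canonical $\C$-subsheaf of $\gr\scr{L}$ cut out by the filtration and inherits a local system structure. The step I expect to be the main obstacle is the careful verification that the formula $\gr\scr{L}|_U=\bigoplus_{\frak{a}\in\scr{I}(U)}\gr_\frak{a}\scr{L}$ holds on \emph{every} open $U$ rather than only on sufficiently small ones, which requires controlling how sections of $\scr{I}$ extend and restrict; a secondary difficulty is to ensure, in the ``only if'' direction, that the canonical $\scr{I}$-grading is respected by the local system structure so that parallel transport on $\gr\scr{L}$ restricts to parallel transport on each $\gr_\frak{a}\scr{L}$.
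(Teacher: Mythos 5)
Your proposal is correct and follows essentially the same route as the paper: both build $\gr\scr{L}$ from the restriction compatibility supplied by condition (1) of Definition \ref{Pre-St} and then read off the characterization from the orbit isomorphism $\gr_\frak{a}\scr{L}\otimes\C[u^{\pm 1}]\simeq\bigoplus_{n\in\Z}\gr_{\frak{a}+2\pi\i s n}\scr{L}$ of \eqref{UN}. The ``main obstacle'' you flag (verifying the formula on arbitrary opens) is dispatched in the paper by the single observation that the restriction map $\scr{I}(U)\to\scr{I}(V)$ is an isomorphism for any two connected open subsets $V\subset U\subsetneq S^1$, so the presheaf $U\mapsto\bigoplus_{\frak{a}\in\scr{I}(U)}\gr_\frak{a}\scr{L}(U)$ is already compatible with restrictions on a basis of the topology.
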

\begin{proof}
For two connected 
open subsets $V\subset U \subsetneq S^1_t$,  
the restriction map 
$\scr{I}(U)\to \scr{I}(V)$ is an isomorphism.
Then, by condition (1) in Definition \ref{Pre-St}, 
we have
\begin{align*}
    \bigoplus_{\frak{a}\in \scr{I}(U)}\gr_\frak{a}\scr{L}_{|V}
    =\bigoplus_{\frak{b}\in \scr{I}(V)}\gr_{\frak{b}}\scr{L}. 
\end{align*}
The first assertion follows from this equality. 
By \eqref{UN}, 
we have a natural isomorphism 
\begin{align*}
    \gr_\frak{a}\scr{L}\otimes\C[u^{\pm 1}]
    \simeq \bigoplus_{n\in \Z} \gr_{\frak{a}+2\pi\i n }\scr{L}
\end{align*}
for each $\frak{a}\in \scr{I}{(U)}$.
The second assertion follows from this isomorphism. 
\end{proof}
\subsubsection{Stokes filtration}
\begin{definition}
Let $\scr{L}$ be a $\scr{A}_\per$-module. 
A pre-Stokes filtration $\scr{L}_\bullet$
on $\scr{L}$ is called 
\textit{a Stokes filtration}
%(\textit{with ramified order $\leq m$})
if the following conditions are satisfied: 
\begin{enumerate}
    \item The graded sheaf $\gr\scr{L}$ is a local system of 
    finitely generated
    $\C[u^{\pm 1}]$-modules.
    \item 
    For each point $x\in S^1$, there exist a
    neighborhood $U$ of $x$ and 
    an isomorphism
    \begin{align*}
        \eta\colon \scr{A}_{\per|U}\otimes_{\C[u^{\pm 1}]}
        \gr\scr{L}_{|U}\overset{\sim}{\longrightarrow}
        \scr{L}_{|U}
    \end{align*}
    of filtered $\scr{A}_{\per|U}$-modules
    such that $\gr(\eta)=\id_{\gr\scr{L}_{|U}}$. 
\end{enumerate}
A pair $(\scr{L},\scr{L}_\bullet)$ of a $\scr{A}_{\per}$-module
and a Stokes filtration on it is called
\textit{a Stokes filtered $\scr{A}_{\per}$-module}. 
The category of Stokes filtered $\scr{A}_{\per}$-module
is denoted by $\Stokes$,
which is defined to be a full subcategory of
the category of pre-Stokes filtered $\scr{A}_{\per}$-modules. 
\end{definition}
\begin{remark}
    By Lemma \ref{grLemma}, 
    the sheaf $\scr{L}$ should be finitely generated and
    locally free over $\scr{A}_\per$ if it admits a Stokes filtration. 
\end{remark}
\subsubsection{Graded Stokes filtered 
$\scr{A}_{\per}$-modules 
and some operations}
\begin{definition}
A Stokes filtered $\scr{A}_\per$-module $(\scr{G},\scr{G}_\bullet)$ 
is called \textit{graded}
if we have
an isomorphism 
\begin{align*}
    \eta\colon \scr{G}\overset{\sim}{\longrightarrow}
    \scr{A}_\per\otimes\gr(\scr{G})
\end{align*}
of Stokes filtered $\scr{A}_\per$-modules such that 
$\gr(\eta)=\id$. 
\end{definition}

For two (pre-)Stokes filtered $\scr{A}_\per$-modules
$(\scr{L}_1,\scr{L}_{1,\bullet})$
and 
$(\scr{L}_2, \scr{L}_{2,\bullet})$
we can define the (pre-)Stokes filtrations
on the tensor product
$\scr{L}_1\otimes \scr{L}_2$
and the sheaf of internal-homs
$\homscr(\scr{L}_1,\scr{L}_2)$
as follows:
\begin{align*}
    (\scr{L}_1\otimes \scr{L}_2)_{\leqslant \frak{a}}
    &\coloneqq\sum_{\frak{b}\in\scr{I}(U)} 
    \scr{L}_{1,\leqslant\frak{b}}
    \otimes_{\scr{A}_\per^{\leqslant 0}}
    \scr{L}_{2,\leqslant\frak{a}-\frak{b}},\\
    \homscr(\scr{L}_1,\scr{L}_2)_{\leqslant\frak{a}}
    &\coloneqq 
    \sum_{\frak{b}\in\scr{I}(U)}
    \homscr_{\scr{A}_\per^{\leqslant 0}}(\scr{L}_{1,\leqslant \frak{b}},
    \scr{L}_{2,\leqslant \frak{a}+\frak{b}}).
\end{align*}
We also use the notation
$\scr{E}nd(\scr{L})\coloneqq \homscr(\scr{L},\scr{L})$.

\subsection{Classification}
 For a Stokes $\scr{A}_{\per}$-module
 $\scr{L}=(\scr{L},\scr{L}_\bullet)$,
we set  
\[\aut^{<0}(\scr{L})\coloneqq 
\id_{\scr{L}}+\scr{E}nd({\scr{L}})
_{<0}.\]
\begin{theorem}\label{ClSt}
Let $\scr{G}$ be a graded Stokes filtered 
$\scr{A}_\per$-module.
Then, there is a natural 
one-to-one correspondence 
between the cohomology set 
\begin{align*}
    H^1(S^1, 
    \aut^{<0}
    (\scr{G}))
\end{align*}
% $\mu_m$-invariant 
and the set of isomorphism classes of 
pairs 
$((\scr{L},\scr{L}_{\bullet}), \Xi_{\gr} )$ of
\begin{itemize}
\item
 a Stokes filtered 
    $\scr{A}_{\per}$-module
    $\scr{L}=(\scr{L},\scr{L}_{\bullet})$,
    and 
\item an isomorphism 
    $\Xi_\gr\colon \scr{A}_\per\otimes_{\C[u^{\pm 1}]} \gr\scr{L}
    \simeqto\scr{G}$ of Stokes filtered $\scr{A}_\per$-modules. 
\end{itemize}
\end{theorem}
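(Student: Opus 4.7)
The plan is to realize the bijection via \v{C}ech cocycles for the sheaf of groups $\aut^{<0}(\scr{G})$, in direct analogy with the Deligne--Malgrange classification of Stokes filtered local systems for meromorphic connections.

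First I would construct the map from isomorphism classes of pairs $((\scr{L},\scr{L}_\bullet),\Xi_\gr)$ into $H^1(S^1,\aut^{<0}(\scr{G}))$. Given such a pair, the second condition in the definition of a Stokes filtration provides, around each point of $S^1$, an open neighborhood $U$ and a filtered isomorphism $\scr{A}_{\per|U}\otimes_{\C[u^{\pm1}]}\gr\scr{L}_{|U}\simeqto\scr{L}_{|U}$ whose associated graded is the identity. Composing with $\Xi_\gr^{-1}$ yields, for a suitable open cover $\{U_i\}$ of $S^1$, local filtered isomorphisms $\eta_i\colon\scr{G}_{|U_i}\simeqto\scr{L}_{|U_i}$, each inducing the identity on $\gr$. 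On an overlap $U_{ij}:=U_i\cap U_j$ I set $\gamma_{ij}:=\eta_i^{-1}\circ\eta_j\in\Aut(\scr{G}_{|U_{ij}})$; by construction $\gamma_{ij}$ preserves the filtration and satisfies $\gr(\gamma_{ij})=\id$, so $\gamma_{ij}-\id$ maps $\scr{G}_{\leqslant\frak{a}}$ into $\scr{G}_{<\frak{a}}$, i.e.\ $\gamma_{ij}\in\aut^{<0}(\scr{G})(U_{ij})$. The tuple $\{\gamma_{ij}\}$ is a \v{C}ech $1$-cocycle, and modifying the $\eta_i$ by elements of $\aut^{<0}(\scr{G})(U_i)$ alters it by a coboundary, so its class in $H^1(S^1,\aut^{<0}(\scr{G}))$ depends only on $((\scr{L},\scr{L}_\bullet),\Xi_\gr)$ up to isomorphism.

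Next I would construct the inverse. Given a cocycle $\{\gamma_{ij}\}$ with values in $\aut^{<0}(\scr{G})$, I glue the sheaves $\scr{G}_{|U_i}$ along the $\gamma_{ij}$ to obtain an $\scr{A}_\per$-module $\scr{L}$. Because every $\gamma_{ij}$ preserves the local filtrations, the submodules $\scr{G}_{\leqslant\frak{a}|U_i}$ glue consistently into a pre-Stokes filtration $\scr{L}_\bullet$ on $\scr{L}$. The condition $\gr(\gamma_{ij})=\id$ shows that the graded sheaves $\gr\scr{G}_{|U_i}$ glue trivially, yielding a canonical isomorphism $\Xi_\gr\colon\scr{A}_\per\otimes_{\C[u^{\pm1}]}\gr\scr{L}\simeqto\scr{G}$. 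Conditions (1) and (2) in the definition of a Stokes filtration on $\scr{L}$ then follow from the corresponding properties of $\scr{G}$, since both gluing data are identities on $\gr$ and strict morphisms of filtrations. A direct diagram chase shows that the two constructions are mutually inverse once one passes to isomorphism classes on one side and cohomology classes on the other.

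The main obstacle is the verification, in the second step, that the transition automorphisms $\gamma_{ij}$ actually lie in the small subsheaf $\aut^{<0}(\scr{G})$ and not merely in $\Aut(\scr{G})$. This requires using both pieces of information in the definition of a Stokes filtration simultaneously---the filtration preservation \emph{and} the identity on graded---and translating the latter into the concrete filtration on $\scr{E}nd(\scr{G})$ defined by $\homscr(\scr{L}_1,\scr{L}_2)_{\leqslant\frak{a}}=\sum_\frak{b}\homscr_{\scr{A}_\per^{\leqslant0}}(\scr{L}_{1,\leqslant\frak{b}},\scr{L}_{2,\leqslant\frak{a}+\frak{b}})$. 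Once this translation is in place, the remaining bookkeeping is the standard formalism of non-abelian \v{C}ech cohomology for a sheaf of groups on the circle $S^1$.
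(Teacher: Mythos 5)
Your proposal is correct and follows essentially the same approach as the paper: the paper likewise builds the \v{C}ech cocycle $(\Xi_\gr\circ\eta_{\alpha+1}^{-1}\circ\eta_\alpha\circ\Xi_\gr^{-1})$ from local filtered splittings on a finite cover of $S^1$ and declares the remaining verification (well-definedness and the inverse gluing construction) standard, which is exactly the part you fill in.
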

\begin{proof}
Since this is standard,
we only give the construction of the cohomology class. 
Let $((\scr{L},\scr{L}_{\bullet}), \Xi_{\gr} )$
be the pair in the statement. 
We can take a finite open covering 
$S^1=\bigcup_{\alpha\in \Lambda} U_\alpha$ 
such that 
\begin{itemize}
\item 
$\Lambda=\Z/N\Z$ for a positive integer $N$, 
\item
$U_\alpha$ is an open interval for each $\alpha\in \Lambda$, and
\item
$U_\alpha\cap U_\beta=\emptyset$ for 
$\beta\notin \{\alpha-1,\alpha,\alpha+1\}$
\end{itemize}
and isomorphisms
\begin{align*}
    \eta_\alpha \colon \rho^{-1}
        \scr{A}_{\per|U_\alpha}
        \otimes_{\C[u^{\pm 1}]}
        \gr\scr{L}_{|U_\alpha}
        \overset{\sim}{\longrightarrow}
        \scr{L}_{|U_\alpha}
\end{align*}
of filtered $\rho^{-1}\scr{A}_{\per|U_\alpha}$-modules
    such that $\gr(\eta_\alpha)=
    \id_{\gr^{}\scr{L}_{|U_\alpha}}$. 
Then, the tuple 
\[\left(\Xi_{\gr|U_{\alpha,\alpha+1}}
\circ 
\eta_{\alpha+1|U_{\alpha,\alpha+1}}^{-1}
\circ
\eta_{\alpha|U_{\alpha,\alpha+1}}
\circ 
\Xi^{-1}_{\gr|U_{\alpha,\alpha+1}}
\right)_{\alpha\in \Lambda}\]
defines a class in 
$H^1(S^1, 
    \aut^{<0}
    (\scr{G}))$,
    where we put $U_{\alpha,\alpha+1}=U_\alpha\cap U_{\alpha+1}$.
\end{proof}

\section{Riemann-Hilbert correspondence for 
mild
difference modules}
\label{RH&RS}
 In this section,
 we formulate and prove a part of 
Riemann-Hilbert correspondence for
mild difference modules
assuming a vanishing theorem (Theorem \ref{Vanishing}). 

\subsection
{Definition of mild difference modules}
\label{SReg}
We fix some notations and terminologies of 
difference modules and 
recall the definition of 
the mild difference
modules in the sense of 
van der Put and Singer \cite{Galois}.
\subsubsection{Difference modules}
Recall that \textit{a difference ring}
is a pair $(R,\Phi)$
of a commutative ring $R$ with a unit $1$ 
and 
a ring automorphism $\Phi$ on $R$. 
If $R$ is a field, then $(R,\Phi)$
is called a difference field.  
Let $\scr{O}_{t}(*0)
=\scr{O}_{\C_t}(*0)_0=\C\conv{t}$
be the field of convergent Laurent series in $t$. 
Let
$\phi_t\colon \scr{O}_t(*0)\to \scr{O}_t(*0)$
be the automorphism defined by 
$\phi_t(f)(t)\coloneqq f(t(1+t)^{-1})$. 
Then the pair $(\scr{O}_t(*0),\phi_{t})$ is an
example  of a difference field. 
The pair $(\widehat{\scr{O}}_t(*0),\widehat{\phi}_t)$
of the formal completion
$\widehat{\scr{O}}_t(*0)\coloneqq \C\pole{t}$ of $\scr{O}_{t}(*0)$
and the automorphism 
$\widehat{\phi}_t$ defined in a similar way as $\phi_t$
is also an instance of a difference field.

A(n invertible) \textit{difference module}
over a difference ring $(R,\Phi)$
is a pair $(M,\Psi)$ of a 
finitely generated $R$-module $M$
and an automorphism 
$\Psi\colon M\to M$ of 
abelian groups such that 
$\Psi(rm)=\Phi(r)\Psi(m)$
for any $r\in R$ and $m\in M$.
We abbreviate $(M,\Psi)$
by omitting $\Psi$ if there is no 
fear of confusion. 
The category of difference modules
over $(\scr{O}_t(*0),\phi_t)$
is denoted by $\Diffc$, which is an abelian category.

For a difference module
$(\scr{M},\psi)$
over $(\scr{O}_t(*0),\phi_{t})$, 
the automorphism $\psi$
naturally extends to an
automorphism $\widehat{\psi}$ on
$\widehat{\scr{M}}\coloneqq
\widehat{\scr{O}}_t(*0)\otimes_{\scr{O}_t{(*0)}}\scr{M}$. 
The pair $(\widehat{\scr{M}},\widehat{\psi}{})$ 
is called the formal completion of 
$(\scr{M},\psi{})$, which is a difference module over
$\widehat{\scr{O}}_t(*0)\coloneqq \C\pole{t}$. 

\subsubsection{Regular singular 
difference modules}
For a constant matrix
$G\in\End(\C^r)$, we let 
$\scr{R}_{G}=(\scr{O}_{t}(*0)^{\oplus r},\psi_G)$
be a difference module defined by
$\psi_G=(1+t)^{-G}\phi_t^{\oplus r}$, where 
we set 
$(1+t)^{-G}=\exp(-G\log (1+t))$ 
with $\log(1+ t)=-\sum_{n=1}^\infty n^{-1}(-t)^n$. 
The formal completion of $\scr{R}_{G}$
is denoted by $\widehat{\scr{R}}_{G}$. 

\begin{definition}\label{DReg}
A difference module $(\scr{M},\psi)$ over
$(\scr{O}_t(*0),\phi_{t})$
is called \textit{regular singular}
if its formal completion
$(\widehat{\scr{M}},\widehat{\psi})$  
is isomorphic to $\widehat{\scr{R}}_G$
for a matrix $G\in\End(\C^r)$. 
The trivial difference module $0$ 
is regarded as 
a regular singular difference module. 
\end{definition}
\subsubsection{Difference modules 
over an extended field}
\label{Extended}
Fix a positive integer $m$. 
We use the notations in \S \ref{Ram}. 
There is a natural inclusion
$\rho^*\colon \scr{O}_t(*0)\to \scr{O}_\tau(*0)$. 
Regard $t=\tau^m$ as an element of $\scr{O}_\tau(*0)$. 
The pair of the field $\scr{O}_\tau(*0)$
and an automorphism 
$\phi_\tau\colon \scr{O}_\tau(*0)\to \scr{O}_\tau(*0)$
defined as 
$\phi_\tau(\tau)\coloneqq\tau(1+t)^{-1/m}$
forms a difference field. 
The notion of formal completion is 
defined analogously as in the case of 
$(\scr{O}_t(*0),\phi_t)$. 
For a difference module $\scr{M}$ 
over $(\scr{O}_t{(*0)},\phi_t)$,
the pull back 
$\rho^*\scr{M}\coloneqq \scr{O}_{\tau}(*0)\otimes \scr{M}$
is a difference module over $(\scr{O}_{\tau}(*0),\phi_\tau)$
in a natural way. 

\subsubsection{Mild 
exponential factors}\label{ExF}
Fix a positive integer $m$ as in \S \ref{Extended}. 
Let $\frak{a}=\frak{a}(\tau)$ be as \eqref{q(tau)} 
in Definition \ref{qtau}.
We note that 
$\exp(\phi_\tau(\frak{a})-\frak{a})$
is a well defined element in $\scr{O}_{\tau}$.
We define 
a difference module 
$\scr{E}^{\frak{a}}\coloneqq
(\scr{O}_\tau{(*0)},\psi_\frak{a})$
by 
$\psi_\frak{a}(1)\coloneqq
\exp(\phi_\tau(\frak{a})-\frak{a})$.
The formal completion of 
$\scr{E}^{\frak{a}}$ will be denoted by
$\widehat{\scr{E}}^{\frak{a}}{}$. 
For two such difference modules 
$\scr{E}^{\frak{a}}$ and $\scr{E}^{\frak{b}}$,
we have $\scr{E}^{\frak{a}}\simeq \scr{E}^{\frak{b}}$
if and only if $\frak{a}-\frak{b}\in 2\pi\i s\Z$.

\subsubsection{Formal decomposition and 
mild difference modules} 
We recall the definition 
of mild difference modules:

\begin{definition}[{\cite[\S 7.1, p.71]{Galois}}]\label{E&U}
A difference module over
$(\scr{O}_\tau(*0),\phi_\tau)$
 is called
\begin{enumerate}
    \item \textit{mild elementary} if it is a direct sum 
    of the modules of the form 
    $\scr{E}^{\frak{a}}\otimes \rho^*\scr{R}_{G}$, and 
    \item \textit{mild unramified} 
    if it is formally isomorphic to 
    a mild elementary module. 
\end{enumerate}

A difference module $\scr{M}$
over $(\scr{O}_t(*0), \phi_t)$
is called \textit{mild} (resp. \textit{mild graded})
if there exists a positive integer
$m$ such that $\rho^*\scr{M}$
is mild unramified (resp. mild elementary). 
The category of 
mild difference modules 
is denoted by $\Diffc^\mild$. 
\end{definition}
\begin{remark}
The terms ``mild elementary", ``mild unramified", 
and ``mild graded"
cannot be found in \cite{Galois}.
We introduced them to clarify the analogy to 
the theory of differential modules.
 Mild difference modules form
    a special class of difference modules.
By \cite[Lemma 7.4]{Galois}, 
a difference module is mild if and only if 
it is isomorphic to a module 
$(\mathscr{O}_t(*0)^{\oplus r},
A(t)\phi^{\oplus r}_t)$ where 
$A(t)$ has no pole and the constant term 
$A(0)$ is invertible. 
    The most general case 
    is called \textit{wild} in \cite{Galois}. 
\end{remark}
\begin{remark}\label{Mellin}
    Let 
    $\mathbb{M}$ be a holonomic $\cal{D}$-module 
    over $\mathbb{G}_m
    =\mathrm{Spec}(\C[x,x^{-1}])$. 
    The algebraic Mellin transformation 
    $\mathfrak{M}(\mathbb{M})$ of 
    $\mathbb{M}$ is a difference module 
    over the difference ring $(\C[s],\phi_s)$
    where $\phi_s(P(s))=P(s+1)$ for 
    $P(s)\in \C[s]$.
    Then  
    $\mathfrak{M}
    (\mathbb{M})
    \otimes_{\C[s]}\mathscr{O}_t(*0)$
    is mild if 
    $\mathbb{M}$ is regular singular
    at $0$ and $\infty$
    by \cite[Theorem 1, Lemma 3]{garcia2018mellin}.
\end{remark}

\begin{lemma}\label{Graded}
Let $\scr{M}$ be a mild difference module over 
    $(\scr{O}_t(*0),\phi_t)$. 
Let $\scr{N}'$ be a mild elementary module over 
$(\scr{O}_\tau(*0),\phi_\tau)$
such that 
the formal completion $\widehat{\scr{N}}'$ is
isomorphic to 
$\rho^*\widehat{\scr{M}}$. 
Then, there exists a 
mild graded difference module 
$\scr{N}$ over $(\scr{O}_t(*0),\phi_t)$
such that 
$\rho^*\scr{N}$ is isomorphic to $\scr{N}'$. 
\end{lemma}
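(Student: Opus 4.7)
The plan is to use Galois descent along the ramified cover $\rho_m$. The group $G := \mu_m$ acts on $\scr{O}_\tau(*0)$ by $\sigma_\zeta(\tau) = \zeta\tau$ with fixed ring $\scr{O}_t(*0)$, and a direct check shows that $\sigma_\zeta$ commutes with $\phi_\tau$, since $\phi_\tau(\tau) = \tau(1+\tau^m)^{-1/m}$ has a $G$-invariant factor for the standard branch near $\tau=0$. Consequently the pullback $\scr{N} \mapsto \rho^*\scr{N}$, equipped with its natural $G$-action, gives an equivalence from difference modules over $(\scr{O}_t(*0),\phi_t)$ to $G$-equivariant difference modules over $(\scr{O}_\tau(*0),\phi_\tau)$, with quasi-inverse given by $G$-invariants. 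It therefore suffices to construct a $G$-equivariant structure on $\scr{N}'$ compatible with $\phi_\tau$; then $\scr{N} := (\scr{N}')^G$ will satisfy $\rho^*\scr{N} \simeq \scr{N}'$ and will be mild graded by definition, since $\scr{N}'$ is mild elementary.

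The natural $G$-equivariant structure on $\rho^*\widehat{\scr{M}}$ consists of $\scr{O}_\tau(*0)$-linear isomorphisms $\widehat{\alpha}_\zeta : \sigma_\zeta^*\rho^*\widehat{\scr{M}} \simeqto \rho^*\widehat{\scr{M}}$ satisfying the cocycle relation. Transporting them through the given formal isomorphism $\iota : \widehat{\scr{N}}' \simeqto \rho^*\widehat{\scr{M}}$, I obtain formal isomorphisms $\widehat{\beta}_\zeta : \sigma_\zeta^*\widehat{\scr{N}}' \simeqto \widehat{\scr{N}}'$ that commute with $\phi_\tau$ and satisfy the cocycle relation. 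It remains to show that each $\widehat{\beta}_\zeta$ is the formal completion of a unique convergent isomorphism $\beta_\zeta : \sigma_\zeta^*\scr{N}' \simeqto \scr{N}'$; uniqueness will automatically propagate the cocycle relation to the convergent lifts, yielding the required descent datum.

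The main obstacle is therefore the following rigidity statement: for any two mild elementary modules $\scr{N}_1, \scr{N}_2$ over $(\scr{O}_\tau(*0), \phi_\tau)$, the natural map
\begin{align*}
\Hom(\scr{N}_1, \scr{N}_2) \longrightarrow \Hom(\widehat{\scr{N}}_1, \widehat{\scr{N}}_2)
\end{align*}
is bijective. Using the decomposition into summands $\scr{E}^{\frak{a}} \otimes \rho^*\scr{R}_G$ together with the tensor-Hom structure, this reduces to comparing formal and convergent horizontal sections of modules $\scr{E}^{\frak{c}} \otimes \rho^*\scr{R}_H$. By the classification recalled in \S \ref{ExF}, such horizontal sections vanish unless $\frak{c} \in 2\pi\i s\Z$, in which case $\scr{E}^{\frak{c}}$ is trivialized by a suitable power of $u$ and the horizontal sections of $\rho^*\scr{R}_H$ are given by explicit convergent formulas involving $\log(1+t)$. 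The mildness hypothesis, which bounds the polar order of $\frak{c}$ by $m$, is crucial here, as it excludes wild exponential behaviour that would obstruct convergence.
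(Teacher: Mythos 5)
Your proposal is correct and is essentially the paper's own argument, just spelled out in more detail: the paper also equips $\widehat{\scr{N}}'$ with the $\mu_m$-action inherited from $\rho^*\widehat{\scr{M}}$, lifts this action to $\scr{N}'$ using the rigidity of mild elementary modules (the paper phrases this tersely as ``the action naturally lifts since $\scr{N}'$ is elementary''), and then takes the descent. Your explicit identification of the key rigidity input — that $\Hom(\scr{N}_1,\scr{N}_2)\to\Hom(\widehat{\scr{N}}_1,\widehat{\scr{N}}_2)$ is bijective for mild elementary modules — and the observation that uniqueness automatically propagates the cocycle condition are exactly what the paper leaves implicit.
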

\begin{proof}
Since the formal completion of   
$\scr{N}'$ is isomorphic to $\rho^*\scr{M}$,
it is naturally equipped with the action of $\mu_m$.
Since $\scr{N}'$ is elementary,
the action naturally lifts to the action on $\scr{N}'$. 
The desired module $\scr{N}$
is the decent of
$\scr{N}'$ by this action. 
\end{proof}
The module $\scr{N}$
in this lemma 
will be called \textit{the graded module of $\scr{M}$}.

\subsection{Riemann-Hilbert functor}\label{RH-fun}
We shall define a functor from 
the category 
$\Diffc^\mild$ of mild difference modules 
over $(\scr{O}_{t}(*0),\phi_t)$
to the category $\Stokes$ of 
Stokes filtered $\scr{A}_{\per}$-modules. 

\subsubsection{Sheaves of difference rings}
The pair $(\widetilde{\scr{O}}_t,\widetilde{\phi}_{t})$
of the sheaf $\widetilde{\scr{O}}_t$ of rings on 
$S^1_t$ and the automorphism 
$\widetilde{\phi}_{t}$ on $\widetilde{\scr{O}}_t$
defined in \S \ref{PER}
is a sheaf of difference rings on $S^1_t$. 
The pairs $(\scr{A}^{\leqslant 0},\widetilde{\phi}_t{})$
and $(\scr{A}^{< 0},\widetilde{\phi}_t{})$ 
are difference subrings in $(\widetilde{\scr{O}}_t,\widetilde{\phi}_{t})$. 

Let  
$\scr{O}_{t}(*0)_{S^1}$
be the constant sheaf on $S^1_t$ with fiber $\scr{O}_{t}(*0)$. 
The automorphism induced from $\phi_t$ on 
$\scr{O}_{t}(*0)_{S^1}$
is denoted by the same letter $\phi_t$. 
We note that the sheaves $\widetilde{\scr{O}}_t$,
$\scr{A}^{\leqslant 0}_t$, and $\scr{A}^{<0}_t$ are
sheaves of difference algebras over 
$(\scr{O}_{t}(*0)_{S^1},\phi_t)$ 
in  a natural way. 
For a difference module $(\scr{M},\psi)$ over 
$(\scr{O}_t(*0),\phi_t)$, 
let $(\scr{M}_{S^1},\psi)$
denote the associated constant sheaf of 
difference modules over 
$(\scr{O}_{t}(*0)_{S^1},\phi_t)$.
For an open subset 
$U\subset S^1$, 
we also use the notation
\[\scr{M}_{|U}\coloneqq 
(\scr{A}^{\leqslant 0}\otimes_{\scr{O}_t(*0)_{S^1}}
\scr{M}_{S^1})_{|U}
\]
for simplicity. 

\subsubsection{Sheaves with 
asymptotic behavior}
For 
$\frak{a}\in \scr{I}(U)$ on an open subset $U\subset S^1_t$, 
let $\scr{A}^{\leqslant \frak{a}}$ (resp. 
$\scr{A}^{< \frak{a}}$) denote the subsheaf
$\exp(\frak{a})\scr{A}^{\leqslant 0}_{|U}$ 
(resp. $\exp(\frak{a})\scr{A}^{< 0}_{|U}$) in 
$\widetilde{\scr{O}}_{|U}$. 
For a local section
$\frak{a}\in \scr{I}(U)$, 
the sheaves $\scr{A}^{\leqslant \frak{a}}$ 
and $\scr{A}^{<\frak{a}}$
are modules over $\scr{A}_{|U}^{\leqslant 0}$. 

\subsubsection{The de Rham complexes 
for difference modules} 

For a sheaf of difference module $(\cal{M},\psi)$
over a sheaf of difference rings, 
we set 
\[\nabla_\psi\coloneqq \psi-\id_\cal{M}.\]

\begin{definition} 
    % Let $(\scr{O}_\tau,\phi_\tau)$
    % be as in \S \ref{Extended}.
    For a difference module $\scr{M}$
    over $(\scr{O}_t,\phi_t)$, we set
\begin{align*}
    \widetilde{\DR}(\scr{M})
    &\coloneqq 
    [\widetilde{\scr{O}}\otimes\scr{M}_{S^1}
    \xrightarrow{{\nabla}_{\widetilde{\psi}}}
    \widetilde{\scr{O}}\otimes \scr{M}_{S^1}],\\
    \DR_{\leqslant 0}(\scr{M})
    &\coloneqq
    [{\scr{A}}^{\leqslant 0}\otimes \scr{M}_{S^1}
    \xrightarrow{{\nabla}_{\widetilde{\psi}}}
    {\scr{A}}^{\leqslant 0}\otimes \scr{M}_{S^1}],
    \text{ and }\\
    \DR_{< 0}(\scr{M})
    &\coloneqq
    [{\scr{A}}^{< 0}\otimes \scr{M}_{S^1}
    \xrightarrow{{\nabla}_{\widetilde{\psi}}}
    {\scr{A}}^{< 0}\otimes \scr{M}_{S^1}],
\end{align*}
where the automorphisms on 
$\widetilde{\scr{O}}\otimes
\scr{M}_{S^1}$, 
${\scr{A}}^{\leqslant 0}\otimes \scr{M}_{S^1}$,
and ${\scr{A}}^{< 0}\otimes \scr{M}_{S^1}$
are denoted by $\widetilde{\psi}$,
and the  complexes are concentrated on degrees zero and one.
\end{definition}

The proof of the following vanishing theorem 
will be given in the next section:
\begin{theorem}\label{Vanishing}
    Assume that $\scr{M}$ is a mild difference module 
    over $(\scr{O}_t,\phi_t)$, then 
    the complexes $\DR_{\leqslant 0}(\scr{M})$ 
    and $\DR_{< 0}(\scr{M})$
    have non-zero cohomology in degree zero
    at most.    
    Moreover, 
    the natural morphisms
    $\DR_{< 0}(\scr{M})\to 
    \DR_{\leqslant 0}(\scr{M})\to
    \widetilde{\DR}(\scr{M})$
    induce injections
    %\begin{align*}
    $\scr{H}^0\DR_{< 0}(\scr{M})\to 
    \scr{H}^0\DR_{\leqslant 0}(\scr{M})\to
    \scr{H}^0\widetilde{\DR}(\scr{M})$. 
    %\end{align*}   
\end{theorem}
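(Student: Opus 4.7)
The plan is to reduce, by a sectorial asymptotic lifting in the spirit of Hukuhara--Turrittin--Sibuya, to the case of a rank-one mild-elementary summand $\scr{E}^{\frak{a}}\otimes\rho^*\scr{R}_G$, and then to solve the resulting inhomogeneous shift equation by an explicit Neumann summation whose direction depends on the sign of $\Re\frak{a}$ along the arc. I would dispose of the injections at the $\scr{H}^0$ level first: since $\scr{A}^{<0}\hookrightarrow\scr{A}^{\leqslant 0}\hookrightarrow\widetilde{\scr{O}}$ are successive subsheaves, $\scr{M}$ is a finite-dimensional vector space over the field $\scr{O}_t(*0)$, and $\nabla_{\widetilde{\psi}}$ on all three complexes is the restriction of one operator on $\widetilde{\scr{O}}\otimes\scr{M}_{S^1}$, the injectivity statements are immediate. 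The entire content of the theorem is therefore the stalkwise surjectivity of $\nabla_{\widetilde{\psi}}$ on $\scr{A}^{\leqslant 0}\otimes\scr{M}_{S^1}$ and on $\scr{A}^{<0}\otimes\scr{M}_{S^1}$.

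The reduction step proceeds as follows. The formation of the complexes and the asymptotic subsheaves are compatible with the ramification map $\rho_m$ of \S\ref{Ram}, so one may replace $\scr{M}$ by $\rho_m^*\scr{M}$ and assume $\scr{M}$ is mild unramified over $(\scr{O}_\tau(*0),\phi_\tau)$. A sectorial asymptotic lifting theorem for mild difference equations, available through the multisummability theory in \cite{Galois}, then yields on each sufficiently small arc $V\subset S^1_\tau$ an $\widetilde{\scr{O}}_{\tau|V}$-linear difference isomorphism between $\widetilde{\scr{O}}_\tau\otimes\scr{M}_{S^1}$ and $\widetilde{\scr{O}}_\tau\otimes\scr{N}_{S^1}$, where $\scr{N}$ is a mild elementary model of $\scr{M}$ (cf.\ Lemma \ref{Graded}), with the crucial additional property that this isomorphism preserves the subsheaves $\scr{A}^{\leqslant 0}$ and $\scr{A}^{<0}$. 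By additivity under direct sums, we are reduced to a single summand $\scr{E}^{\frak{a}}\otimes\rho^*\scr{R}_G$. In the standard trivialization $\widetilde{\psi}$ acts by $h\mapsto e^{\phi_\tau(\frak{a})-\frak{a}}(1+t)^{-G}\widetilde{\phi}_\tau(h)$; the gauge change $h = e^{-\frak{a}}w$ turns $\widetilde{\psi}(h)-h=g$ into $(1+t)^{-G}\widetilde{\phi}_\tau(w)-w = e^{\frak{a}}g$, and multiplication by $e^{\pm\frak{a}}$ carries $\scr{A}^{\leqslant 0},\scr{A}^{<0}$ onto $\scr{A}^{\leqslant\frak{a}},\scr{A}^{<\frak{a}}$, so it suffices to solve this twisted shift equation locally. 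Shrinking $V$ to avoid the Stokes rays of $\frak{a}$, the quantity $\Re\frak{a}(s+n)$ tends monotonically to $-\infty$ (resp.\ $+\infty$) as $n\to+\infty$; on the decreasing side the forward Neumann sum $w = -\sum_{n\geq 0}\bigl((1+t)^{-G}\widetilde{\phi}_\tau\bigr)^n(e^{\frak{a}}g)$ converges term-wise in the prescribed subsheaf, the polynomial factors from the iterates of $(1+t)^{-G}$ being absorbed by the exponential decay, while on the increasing side one uses the corresponding backward sum over $n\leq -1$.

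The main obstacle is the sectorial asymptotic lifting in the filtered sense: one must promote the formal isomorphism furnished by mild unramifiedness to a genuine sectorial isomorphism that is simultaneously compatible with both growth sheaves $\scr{A}^{\leqslant 0}$ and $\scr{A}^{<0}$. The sharp estimates here rest on the invertibility of the constant term $A(0)$ of the matrix of $\psi$ (the very definition of mildness), which forces the formal intertwiner to be multisummable of Gevrey order one, and hence to admit a canonical sectorial lift with the required asymptotic decay. Once this filtered lifting is in hand, the rank-one summation in the preceding paragraph is routine, and a minor additional verification shows that the regular-singular twist $\rho^*\scr{R}_G$ contributes only polynomial factors that do not affect convergence.
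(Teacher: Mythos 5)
Your reduction step is sound and parallels the paper's: the injections on $\scr{H}^0$ are immediate from the inclusions of sheaves of rings, the theorem is stalkwise, and the sectorial lifting of Theorem \ref{CReg} (Braaksma--Faber, Immink) reduces matters to a single elementary summand $\scr{E}^{\frak{a}}\otimes\scr{R}_{G|U}$, after which the gauge change by $\exp(-\frak{a})$ is exactly what one should do. The problem is the final solution step.

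The forward/backward Neumann sum $w(s)=-\sum_{n\geq 0}\bigl((1+t)^{-G}\widetilde{\phi}\bigr)^n(e^{\frak{a}}g)(s)$ is not a well-defined section of the sheaf you are working in. You are solving stalkwise at a boundary point $e^{\i\theta}$ with $0<\theta<\pi$; a representative of the germ $g$ is holomorphic only on a sector $S_c(\theta;\varepsilon)$ as in the paper's notation, which for small $\varepsilon$ contains neither the direction $\arg=0$ nor $\arg=\pi$. The shift $s\mapsto s+n$ drives $\arg(s+n-c)\to 0$, so $s+n$ leaves $S_c(\theta;\varepsilon)$ after finitely many steps; the terms of the forward series are therefore not even defined for large $n$, and likewise for the backward series. ``Shrinking $V$ to avoid Stokes rays of $\frak{a}$'' does not help: the issue is the direction of the shift relative to the sector, not the location of Stokes lines, and shrinking $V$ only makes matters worse. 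Moreover, even on sectors where the iterates make sense, the series can diverge: for the regular-singular summand $\frak{a}=0$ and $g$ of merely moderate growth (already for $g$ a nonzero constant) the sum $-\sum_{n\geq 0}g(s+n)$ diverges, though the primitive $h$ exists and has moderate growth. The same failure occurs whenever $\Re\bigl(\frak{a}(s+n)-\frak{a}(s)\bigr)$ stays bounded, e.g.\ $\frak{a}=\i\beta s$ with $\beta\in\R\setminus 2\pi\Z$, a case the paper explicitly must confront (see the proof of Theorem \ref{O}). Your proposal does not address these cases, and the phrase ``the polynomial factors ... being absorbed by the exponential decay'' presupposes a decay that is simply absent there.

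The paper's proof circumvents exactly this by replacing the discrete sum with Immink's integral operator
\begin{align*}
\Lambda_\theta(f)(s)=-f(s)+Y(s)\int_{C(s)}\frac{Y(\zeta)^{-1}f(\zeta)}{1-e^{2\pi\i(s-\zeta)}}\,d\zeta,
\end{align*}
where $C(s)$ is a ray \emph{inside} the sector, in direction $\theta$, and $Y(s)=\exp(-\frak{a}(s))s^{G}$. The kernel $1/(1-e^{2\pi\i(s-\zeta)})$ has poles at $\zeta=s+n$; when the residue sum converges it reproduces a Neumann series, but the integral itself is meaningful on the sector where the Neumann sum is not. Crucially, the required decay of the integrand along $C(s)$ is arranged by normalizing $\frak{a}$ modulo $2\pi\i s\Z$ --- the inequality \eqref{mu} --- which adds $2\pi\i k\zeta$ to the exponent and, along a ray in the upper half-plane, contributes genuine exponential decay. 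A Neumann sum along $n\in\Z$ cannot exploit this normalization, since $e^{2\pi\i k n}=1$. The estimate is then closed by Immink's Lemma \ref{Immink-Estimate}. In short: the reduction is fine, but the summation device must be replaced by the contour-integral operator; as written, your argument has a genuine gap at precisely the regular-singular and imaginary-leading-coefficient cases, and more basically the series is undefined on the sectors where the theorem needs to be proved.
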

The following 
corollary will also be proved 
in the next section:
\begin{corollary}\label{Cor}
     Let $\scr{M}$ be a 
        mild difference module 
        and $\frak{a}\in \scr{I}(U)$. 
        If we set
        \begin{align*}
            \DR_{\leqslant \frak{a}}(\scr{M})
            &=
            [\scr{A}^{\leqslant \frak{a}}\otimes
            \scr{M}_{|U}
            \xrightarrow{\nabla_{\widetilde{\psi}}}
            \scr{A}^{\leqslant \frak{a}}\otimes
            \scr{M}_{|U}
            ], \text{ and} \\
            \DR_{< \frak{a}}(\scr{M})
            &=
            [\scr{A}^{< \frak{a}}\otimes
            \scr{M}_{|U}
            \xrightarrow{\nabla_{\widetilde{\psi}}}
            \scr{A}^{< \frak{a}}\otimes
            \scr{M}_{|U}
            ],
            \end{align*}
            then these complexes have non-zero cohomology in degree zero at most. 
       Moreover, 
    the natural morphisms
    $\DR_{< \frak{a}}(\scr{M})\to 
    \DR_{\leqslant \frak{a}}(\scr{M})\to
    \widetilde{\DR}(\scr{M})$
    induce injections
    %\begin{align*}
    $\scr{H}^0\DR_{< \frak{a}}(\scr{M})\to 
    \scr{H}^0\DR_{\leqslant \frak{a}}(\scr{M})\to
    \scr{H}^0\widetilde{\DR}(\scr{M})$. 
    The quotient
    \begin{align*}
    \gr_{\frak{a}}\scr{H}^0\DR(\scr{M})\coloneqq
     \scr{H}^0\DR_{\leqslant \frak{a}}(\scr{M})
     /\scr{H}^0\DR_{< \frak{a}}(\scr{M})
    \end{align*}
    is a local system of $\C$-vector space
    over $U$. 
    \end{corollary}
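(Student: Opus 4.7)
The plan is to reduce Corollary \ref{Cor} to Theorem \ref{Vanishing} by a twist argument using multiplication by $\exp(\frak{a})$.

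\textbf{Step 1 (Twist isomorphism).} The element $\exp(\frak{a})$ is invertible in $\widetilde{\scr{O}}_t(U)$, so multiplication by it induces $\scr{A}^{\leqslant 0}_{|U}$-linear sheaf isomorphisms
\begin{align*}
m_{\exp(\frak{a})}\colon \scr{A}^{\leqslant 0}_{|U}\otimes \scr{M}_{|U}&\simeqto \scr{A}^{\leqslant \frak{a}}\otimes \scr{M}_{|U},\\
m_{\exp(\frak{a})}\colon \scr{A}^{< 0}_{|U}\otimes \scr{M}_{|U}&\simeqto \scr{A}^{< \frak{a}}\otimes \scr{M}_{|U}.
\end{align*}
A direct computation using $\widetilde{\phi}_t(\exp(\frak{a}))=\exp(\widetilde{\phi}_t(\frak{a}))$ shows that, under these isomorphisms, the operator $\nabla_{\widetilde{\psi}}$ pulls back to $\nabla_{\widetilde{\psi}^\frak{a}}$, where $\widetilde{\psi}^\frak{a} \coloneqq \exp(\widetilde{\phi}_t(\frak{a})-\frak{a})\cdot\widetilde{\psi}$. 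Consequently, $\DR_{\leqslant \frak{a}}(\scr{M})$ and $\DR_{<\frak{a}}(\scr{M})$ are isomorphic as complexes to the ``$\leqslant 0$'' and ``$<0$'' de Rham complexes of the locally defined twisted module $(\scr{M}_{|U},\widetilde{\psi}^\frak{a})$.

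\textbf{Step 2 (Identification with a mild module).} Working locally on $U$ and passing via $\rho=\rho_m$ to $S^1_\tau$ so that $\frak{a}$ lifts to a section of $\scr{I}_{m,\tau}$, the factor $\exp(\widetilde{\phi}_t(\frak{a})-\frak{a})$ is precisely the defining automorphism of $\scr{E}^{\frak{a}}$ (cf.\ \S \ref{ExF}). Hence the twisted local module corresponds to $\rho^*\scr{M}\otimes \scr{E}^{\frak{a}}$ over $(\scr{O}_\tau(*0),\phi_\tau)$. Since $\rho^*\scr{M}$ is mild unramified with a formal decomposition $\bigoplus_i \scr{E}^{\frak{b}_i}\otimes\rho^*\scr{R}_{G_i}$, tensoring with $\scr{E}^{\frak{a}}$ simply shifts each exponential factor to $\frak{b}_i+\frak{a}$, producing a module of the same type. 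Thus $\rho^*\scr{M}\otimes \scr{E}^{\frak{a}}$ remains mild.

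\textbf{Step 3 (Apply the vanishing theorem).} Invoking Theorem \ref{Vanishing} (in the version for mild difference modules over $(\scr{O}_\tau(*0),\phi_\tau)$, whose proof is verbatim the one for $\scr{O}_t$) applied to $\rho^*\scr{M}\otimes \scr{E}^{\frak{a}}$, and transporting the conclusion back to $U$ via $m_{\exp(\frak{a})}$, yields that $\DR_{\leqslant \frak{a}}(\scr{M})$ and $\DR_{<\frak{a}}(\scr{M})$ have cohomology concentrated in degree zero and that the natural maps $\scr{H}^0\DR_{< \frak{a}}(\scr{M})\to \scr{H}^0\DR_{\leqslant \frak{a}}(\scr{M})\to \scr{H}^0\widetilde{\DR}(\scr{M})$ are injective.

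\textbf{Step 4 (Local system property).} The same twist identifies $\gr_{\frak{a}}\scr{H}^0\DR(\scr{M})$ with $\gr_0 \scr{H}^0 \DR(\rho^*\scr{M}\otimes\scr{E}^{\frak{a}})$. After shrinking $U$ so that $\rho^*\scr{M}\otimes \scr{E}^{\frak{a}}$ is mild elementary, only the direct summands with $\frak{b}_i+\frak{a}\in 2\pi\i s\Z$ contribute to $\gr_0$, and each such summand has the form $u^n\otimes\rho^*\scr{R}_{G_i}$ for some integer $n$. For such regular singular difference modules, the quotient of moderate-growth flat sections by rapid-decay flat sections is a local system of $\C$-vector spaces by the standard asymptotic analysis of $\scr{R}_G$. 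The main obstacle will be the compatibility book-keeping between $S^1_t$ and $S^1_\tau$: one has to verify carefully that the twist isomorphism, which is intrinsic on $U\subset S^1_t$, interchanges the two de Rham complexes in a way compatible with the $\mu_m$-equivariant descent inherent in the definition of mildness.
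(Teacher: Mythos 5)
Your proposal is correct and takes essentially the same route as the paper: the paper's proof is exactly your Step 1, namely the observation that multiplication by $\exp(-\frak{a})$ gives an isomorphism $\DR_{\leqslant \frak{a}}(\scr{M}) \simeqto \DR_{\leqslant 0}(\scr{E}^{\frak{a}}\otimes \scr{M}_{|U})$, after which Theorem \ref{Vanishing} (whose proof, via Claim \ref{Claim} and Theorem \ref{CReg}, is already local and applies to modules of the form $\scr{E}^{\frak{a}}\otimes\scr{R}_{G|U}$) does the work. One small remark: the detour through $S^1_\tau$ in your Steps 2 and 4, and the closing worry about $\mu_m$-equivariant bookkeeping, is avoidable since the paper's $\scr{E}^{\frak{a}}$ of \S\ref{EE} is defined intrinsically on $U\subset S^1_t$ for any $\frak{a}\in\scr{I}(U)$, so the twist never leaves $S^1_t$ and there is no descent issue to check.
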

\begin{remark}
    Let $U\subset S^1$ be an open subset
    and $\frak{a}\in \scr{I}(U)$. 
    For a sheaf of difference module $\scr{N}$
    over 
    $(\scr{A}^{\leqslant 0 }_{|U}, 
    \widetilde{\phi}_t)$, 
    we can define the
    complexes  
    $\DR_{\leqslant \frak{a}}(\scr{N})$
    and $\DR_{< \frak{a}}(\scr{N})$
    in a similar way.
    These complexes are natural in the sense
    that a morphism
    $\xi\colon \scr{N}\to \scr{N}'$
    of 
    $(\scr{A}^{\leqslant 0 }_{|U}, 
    \widetilde{\phi}_t)$-modules induces 
    morphisms of the complexes
    $\DR_{< \frak{a}}(\scr{N})\to
    \DR_{< \frak{a}}(\scr{N}')$ and  
    $\DR_{\leqslant  \frak{a}}(\scr{N})\to
    \DR_{\leqslant \frak{a}}(\scr{N}')$
    in a natural way.
\end{remark}

Note that we have 
$u^n\colon \DR_{\leqslant\frak{a}}(\scr{M})
\simeqto \DR_{\leqslant\frak{a}+2\pi\i n s}(\scr{M})$
for $n\in \Z$. 

\begin{lemma}
There exists a unique 
sub sheaf $\Per(\scr{M})\subset \widetilde{\DR}{(\scr{M})}$
of $\scr{A}_\per$-modules
such that 
%\begin{align*}
    $\Per(\scr{M})_{|U}=
    \sum_{\frak{a}\in \scr{I}(U)}
    \scr{H}^0\DR_{\leqslant \frak{a}}(\scr{M})$
%\end{align*}
for any open subset $U\subsetneq S^1$. 
\end{lemma}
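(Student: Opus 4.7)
The plan is to define $\Per(\scr{M})$ on every proper open subset $U \subsetneq S^1$ by the prescribed formula inside the ambient sheaf $\widetilde{\DR}(\scr{M})$, check that the formula is functorial in $U$ and defines a sub-$\scr{A}_\per$-module of $\scr{H}^0\widetilde{\DR}(\scr{M})$, and then observe that proper open subsets form a basis of the topology of $S^1$, so the collection extends uniquely to a subsheaf of $\widetilde{\DR}(\scr{M})$.

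For a fixed $U \subsetneq S^1$ I would use Corollary \ref{Cor} to view each $\scr{H}^0\DR_{\leqslant \frak{a}}(\scr{M})$ as a sub-$\scr{A}_{\per|U}^{\leqslant 0}$-module of $\scr{H}^0\widetilde{\DR}(\scr{M})|_U$, so that $\sum_{\frak{a}\in\scr{I}(U)} \scr{H}^0\DR_{\leqslant \frak{a}}(\scr{M})$ is a well-defined sub-$\scr{A}_{\per|U}^{\leqslant 0}$-module of $\scr{H}^0\widetilde{\DR}(\scr{M})|_U$. Closure under $\scr{A}_\per^{\leqslant 0}$ is automatic: its sections are the $\widetilde{\phi}_t$-invariant sections of $\scr{A}^{\leqslant 0}$, and hence commute with $\nabla_{\widetilde{\psi}}$ and preserve its kernel. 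To upgrade the sum to an $\scr{A}_\per$-submodule, I would use the equality $\widetilde{\phi}_t(u)=u$, which makes multiplication by $u^n$ commute with $\nabla_{\widetilde{\psi}}$ and yields an isomorphism
\[
u^n:\ \scr{H}^0\DR_{\leqslant \frak{a}}(\scr{M})\ \simeqto\ \scr{H}^0\DR_{\leqslant \frak{a}+2\pi\i n s}(\scr{M}).
\]
Since $\frak{a}+2\pi\i n s \in \scr{I}(U)$ whenever $\frak{a}\in \scr{I}(U)$, multiplication by $u^{\pm 1}$ permutes the summands, and combined with the decomposition $\scr{A}_\per = \sum_{n\in\Z} u^n \scr{A}_\per^{\leqslant 0}$ this gives the desired $\scr{A}_\per$-module structure on the sum.

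For restriction compatibility, if $V\subset U$ and $\frak{a}\in \scr{I}(U)$, then naturality of the complexes $\DR_{\leqslant\bullet}$ under restriction yields $\scr{H}^0\DR_{\leqslant \frak{a}}(\scr{M})|_V \subset \scr{H}^0\DR_{\leqslant \frak{a}|_V}(\scr{M})$, so the sub-$\scr{A}_\per$-module on $U$ restricts into the one on $V$ inside the ambient sheaf $\scr{H}^0\widetilde{\DR}(\scr{M})$. This exhibits $\Per(\scr{M})$ as a well-defined subsheaf of $\widetilde{\DR}(\scr{M})$ whose sections on every proper open subset coincide with the given formula; uniqueness is automatic because proper open subsets form a basis of the topology on $S^1$. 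The only subtle bookkeeping is around the indexing sheaf $\scr{I}$, namely that $\scr{I}$ really is a subsheaf of $\widetilde{\scr{O}}_t$ (so that restriction of local sections is available) and that $2\pi\i n s$ is always a local section of $\scr{I}$; both facts are built into the definitions of \S \ref{SSt} and so no genuine obstacle arises.
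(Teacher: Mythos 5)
Your argument establishes only one direction of the restriction compatibility, and that is not enough to conclude. For nonempty connected opens $V\subset U\subsetneq S^1$ and $\frak{a}\in\scr{I}(U)$, you observe $\scr{H}^0\DR_{\leqslant\frak{a}}(\scr{M})|_V\subset\scr{H}^0\DR_{\leqslant\frak{a}|_V}(\scr{M})$ (in fact one has equality), so the sheaf on $U$ defined by the formula restricts \emph{into} the sheaf on $V$ defined by the formula. But to build a global subsheaf $\Per(\scr{M})$ whose restriction to \emph{every} proper open $U$ recovers the formula, you must also know that the containment
\begin{align*}
\Big(\sum_{\frak{a}\in\scr{I}(U)}\scr{H}^0\DR_{\leqslant\frak{a}}(\scr{M})\Big)\Big|_V
\ \subset\
\sum_{\frak{b}\in\scr{I}(V)}\scr{H}^0\DR_{\leqslant\frak{b}}(\scr{M})
\end{align*}
is an equality. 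If $\scr{I}(U)\to\scr{I}(V)$ were merely injective, the sum indexed by $\scr{I}(V)$ could have strictly more terms than the restricted sum indexed by $\scr{I}(U)$; gluing the proposed local definitions would then produce a sheaf that fails to agree with the stated formula over the larger $U$, and the lemma would be false as stated.

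The paper's one-line proof supplies precisely the missing ingredient: for nonempty connected opens $V\subset U\subsetneq S^1_t$, the restriction map $\scr{I}(U)\to\scr{I}(V)$ is an \emph{isomorphism}, in particular surjective. This is a special feature of the index sheaf $\scr{I}$ (each $\scr{I}_m$ is a finite-rank local system on $S^1_t$, hence constant over any proper interval), and it is the entire content of the lemma. With this fact in hand, the restriction map between the two sums displayed above is an isomorphism of subsheaves of $\widetilde{\DR}(\scr{M})|_V$, the local definitions glue to a unique subsheaf, and the $\scr{A}_\per$-structure is inherited. The verification of the $u^n$-action and the $\scr{A}_\per$-module structure, on which your proposal spends most of its effort, is correct but peripheral; your draft relegates the compatibility of the index sheaf with restriction to ``bookkeeping'' without actually checking it, and that is exactly where the proof lives.
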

\begin{proof}
If two open subsets $U$ and $V$ with  
$\emptyset \neq V\subset  U\subsetneq S^1_t $
are connected,
the restriction map 
$\scr{I}(U)\to \scr{I}{(V)}$
is an isomorphism.  
Then the restriction map 
$\sum_{\frak{a}\in \scr{I}(U)}
    \DR_{\leqslant \frak{a}}(\scr{M})
    \to \sum_{\frak{b}\in \scr{I}(V)}
    \DR_{\leqslant \frak{b}}(\scr{M})
$
is isomorphism. 
The lemma follows from this fact. 
\end{proof}
\begin{remark}\label{Periodic}
    We do not have 
    $\Per(\scr{M})= \scr{H}^0\widetilde{\DR}(\scr{M})$
    in general.
    In other words, the filtration 
    $\scr{H}^0\DR_{\leqslant \bullet}(\scr{M})$
    on $\scr{H}^0\widetilde{\DR}(\scr{M})$
    is not exhaustive in general. 
    This is one of the differences between
    our setting and the case 
    of meromorphic connections. 
    We also use the notation 
    $\Per$ for difference modules 
    over 
    $(\scr{A}_{|U}^{\leqslant 0},
    \widetilde{\phi}_t )$ in a similar way. 
\end{remark}

We set
\begin{align*}
  \scr{H}^0\DR_{\bullet }(\scr{M})
  \coloneqq 
  \{\scr{H}^0\DR_{\leqslant\frak{a}}(\scr{M})
  \mid U \subset S^1_t\colon \text{open subset, }
  \frak{a}\in \scr{I}(U)\}. 
\end{align*}
It is easy to see that 
$\scr{H}^0\DR_{\bullet}(\scr{M})$
is a pre-Stokes filtration on $\DR(\scr{M})$. 
For a morphism $\xi\colon\scr{M}\to \scr{M}'$
of analytic difference modules, 
we naturally obtain a morphism 
$\RH(\xi)\colon (\Per(\scr{M}),\scr{H}^0\DR_\bullet(\scr{M}))
\to (\Per(\scr{M}'),\scr{H}^0\DR_\bullet (\scr{M}'))$
of pre-Stokes filtered $\scr{A}_{\per}$-modules.
\begin{definition}
We define a functor 
$\RH\colon \Diffc^\mild \to\preSt$ 
by
\begin{align*}
    \RH(\scr{M})\coloneqq
(\Per(\scr{M}),\scr{H}^0\DR_\bullet(\scr{M}))
\end{align*}
for an object $\scr{M}\in {\Diffc}^\mild$
and $\RH(\xi)$ for a morphism $\xi$ in 
$\Diffc^\mild$. 
\end{definition}
\subsubsection{An example}
Let $\scr{O}_{t}(*0)$ denote 
the difference module
$(\scr{O}_{t}(*0),\phi_t)$
over itself. 
%The following lemma is a key observation:
\begin{theorem}\label{O}
$\Per(\scr{O}_{t}(*0))=\scr{A}_{\per}$. 
\end{theorem}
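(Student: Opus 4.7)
My plan is to unravel the definitions and reduce the theorem to a local growth comparison. For $\scr{M} = \scr{O}_t(*0)$ the tensor products in the definition collapse, and $\DR_{\leqslant \frak{a}}(\scr{O}_t(*0))$ becomes the complex $[\scr{A}^{\leqslant \frak{a}} \xrightarrow{\widetilde{\phi}_t - \id} \scr{A}^{\leqslant \frak{a}}]$; its zeroth cohomology is the $\widetilde{\phi}_t$-invariant part, namely $\widetilde{\scr{O}}_\per \cap \scr{A}^{\leqslant \frak{a}}$. That $\widetilde{\phi}_t$ preserves $\scr{A}^{\leqslant \frak{a}}$ is an easy check from $\widetilde{\phi}_t(\frak{a})-\frak{a} = \sum_\ell c_\ell((1+t)^{\ell/m}-1)\tau^{-\ell}$ being bounded (the factor $(1+t)^{\ell/m}-1$ vanishes to order $1$ at $t=0$ while $\tau^{-\ell}=t^{-\ell/m}$ has $\ell/m\leq 1$). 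The theorem thus reduces to showing
\[
\scr{A}_\per(U) \;=\; \sum_{\frak{a}\in \scr{I}(U)}\bigl(\widetilde{\scr{O}}_\per(U)\cap \scr{A}^{\leqslant \frak{a}}(U)\bigr)
\]
for every open $U \subsetneq S^1$.

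For the inclusion $\subseteq$, the corollary gives $\scr{A}_\per = \sum_{n\in \Z} u^n \scr{A}_\per^{\leqslant 0}$. Since $u^n = \exp(2\pi\i n s)$ lies in $\scr{A}^{\leqslant 2\pi\i n s}$ and $\scr{A}_\per^{\leqslant 0} = \widetilde{\scr{O}}_\per \cap \scr{A}^{\leqslant 0}$, each summand $u^n\scr{A}_\per^{\leqslant 0}$ lies in $\widetilde{\scr{O}}_\per \cap \scr{A}^{\leqslant 2\pi\i n s}$, which is of the required form.

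The reverse inclusion $\supseteq$ is the analytic core. Take $U \subset (0,\pi)$ first: by the lemma describing $\widetilde{\scr{O}}_\per$, a section of $\widetilde{\scr{O}}_\per(U)$ is a Laurent series in $v$ convergent on a punctured neighborhood of $v=0$, whereas $\scr{A}_\per(U) = \scr{O}_v(*0)$ consists of those meromorphic at $v=0$. Any $\frak{a}=\sum_{\ell=1}^m c_\ell \tau^{-\ell} \in \scr{I}(U)$ satisfies $|\exp\frak{a}|\leq \exp(C/|t|)$ since $\ell/m \leq 1$, so any $f \in \scr{A}^{\leqslant \frak{a}}(U)$ is bounded on sub-sectors by $|t|^{-N}\exp(C/|t|)$. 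The identity $\log(1/|v|)=(2\pi/|t|)\sin\theta$ on rays of argument $\theta\in(0,\pi)$ converts this to a polynomial bound $|f(v)| \leq C'|v|^{-N'}$ as $v \to 0$, and the removable singularity theorem then forces $f \in \scr{O}_v(*0)$. The case $U\subset(-\pi,0)$ is symmetric with $u$ in place of $v$, and for $U$ meeting $\{e^0,e^{\i\pi}\}$ the two previous cases applied to $U\cap(0,\pi)$ and $U\cap(-\pi,0)$ combine to force $f$ to be meromorphic at both $v=0$ and $u=0$, so $f \in \C[u^{\pm 1}] = \scr{A}_\per(U)$.

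The main obstacle is this growth-translation step: the control $|\exp\frak{a}| \leq \exp(C/|t|)$, which holds precisely because $\frak{a}$ has at most $\tau^{-m}$-type poles, is what allows moderate-growth bounds in $t$ to be transferred to polynomial bounds in $v$. Once that translation is in place, the rest is a direct unpacking of the definitions together with one application of the removable singularity theorem.
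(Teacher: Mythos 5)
Your proof is correct, and it takes a genuinely different route from the paper's. After the (correct) reduction to showing $\widetilde{\scr{O}}_\per(U)\cap\scr{A}^{\leqslant\frak{a}}(U)\subset\scr{A}_\per(U)$, the paper proceeds by a three-way case analysis on $\ell_{\max}=\max\{\ell : c_\ell\neq 0\}$ in the expansion $\frak{a}=\sum c_\ell\tau^{-\ell}$, and in each case it explicitly identifies the intersection sheaf as $\scr{A}_\per^{\leqslant 0}$, $\scr{A}_\per^{<0}$, some $u^n\scr{A}_\per^{\leqslant 0}$, or $0$ (for the last case, working stalkwise and using $c_m\notin 2\pi\i\Z$ to kill the stalks at $\pm 1$). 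You instead observe the single uniform estimate $|\exp\frak{a}|\leq\exp(C/|t|)$ (which holds precisely because $\ell/m\leq 1$), convert it via $\log(1/|v|)=(2\pi\sin\theta)/|t|$ into a polynomial bound $|f|\leq C'|v|^{-N'}$, and invoke Riemann's theorem on removable singularities to get meromorphy at $v=0$ (resp.\ $u=0$), then glue the two sides for $U$ meeting $e^0$ or $e^{\pi\i}$. Your route avoids the case split entirely at the cost of losing the precise description of the intersection — but the theorem only needs containment, so nothing is lost. The one step you leave tacit is that the moderate-growth bound, which you derive on a compact subsector of the $t$-plane, really does hold on a full punctured disc in $v$ before the removable-singularity theorem can fire; this is true because the $v$-image of any sector $\{|t|<\epsilon,\ \arg t\in[a',b']\}$ with $[a',b']\subset(0,\pi)$ covers a full punctured neighborhood of $v=0$, but it should be said.
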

\begin{proof}
By definition,
we have 
$\DR_{\leqslant 2\pi\i n s}(\scr{O}_{t}(*0))
=u^n\scr{A}_{\per}^{\leqslant 0}$
for every $n\in \Z$. 
It follows that 
$\scr{A}_{\per}\subset \Per(\scr{O}_{t}(*0))$.
It then remains to prove the relation
\begin{align}\label{Key}
    \widetilde{\scr{O}}_{\per|U}\cap 
    \scr{A}^{\leqslant \frak{a}}
    \subset \scr{A}_{\per|U}
\end{align}
for any $\frak{a}\in \scr{I}(U)$,
where $U\subset  S^1$ 
is an open interval. 
To see the relation \eqref{Key},
we use the presentation
%\begin{align*}
    $\frak{a}(t)=
    \sum_{\ell=1}^m c_\ell\tau^{-\ell}$
%\end{align*}
as in \eqref{q(tau)}. 
Here, we fix a branch of 
$\log t$ on  $U$ 
and put $\tau\coloneqq \exp(m^{-1}\log t)$
for a positive integer $m$.

The relation \eqref{Key} is shown as follows.
Take an integer $\ell_{\max}\coloneqq \max
\{\ell\mid c_\ell\neq 0\}$.
If $\ell_{\max}\neq m$, 
then using the open subset
\begin{align*}
    U(\frak{a})\coloneqq 
    \{e^{\i\theta}\in U\mid
    \Re( \exp(-\i\theta \ell_{\max}/m)c_{\ell_{\max}})>0\},
\end{align*}
where $\theta=\Im (\log t)$,
we have
\begin{align*}
    (\widetilde{\scr{O}}_{\per}\cap 
    \scr{A}^{\leqslant \frak{a}})
    (V)=\begin{cases}
    \scr{A}_{\per}^{\leqslant 0}(V) 
    &(V\subset U(\frak{a}))\\
    \scr{A}_{\per}^{< 0}(V)
    &(V\not\subset U(\frak{a})).
    \end{cases}
\end{align*}
This implies \eqref{Key} in this case. 
The case 
$\ell_{\max}=m$ and $c_m\in 2\pi\i\Z$
is reduced to the case above. 
Assume that 
$\ell_{\max}=m$ and 
$c_m\notin 2\pi\i\Z $. 
In this case, 
for each point 
$e^{\i\theta}\in U\setminus
\{e^0,e^{\pi\i}\}$,
there exists
an integer $n$
such that 
\begin{align*}
    (\widetilde{\scr{O}}_{\per|U}\cap 
    \scr{A}^{\leqslant \frak{a}})_{e^{\i\theta}}
    \subset 
    u^n\scr{A}_{\per,e^{\i\theta}}
    ^{\leqslant 0}
    \subset \scr{A}_{\per, e^{\i\theta}}, 
\end{align*}
which implies \eqref{Key} at $e^{\i\theta}$. 
Since $c_m\notin 2\pi\i \Z$,
we have 
$ (\widetilde{\scr{O}}_{\per|U}\cap 
    \scr{A}^{\leqslant
    \frak{a}})_{\pm 1}=0$.
 \end{proof}

\subsubsection{Elementary examples}\label{EE}
% Take 
% $m\in\Z_{>0}$ and use notations 
% in \S \ref{Ram} and \S \ref{Extended}. 
Let $U\subset S^1$ be an open subset. 
For each $\frak{a}\in \scr{I}(U)$ and 
$G\in \End(\C^r)$
we consider a 
$(\scr{A}^{\leqslant 0}_{|U},
\widetilde{\phi}_t)$-module
$\scr{E}^\frak{a}\otimes \scr{R}_{G|U}$,
where we have set 
$\scr{E}^{\frak{a}}\coloneqq 
(\scr{A}^{\leqslant 0}_{|U},
\exp(\widetilde{\phi}_t(\frak{a})-\frak{a})\widetilde{\phi}_t)$
similarly as in \S \ref{ExF}. 
 Then we obtain the following:
\begin{corollary}\label{EM}
    $\Per(\scr{E}^{\frak{a}}
\otimes \scr{R}_{G|U}))
= 
\exp(-\frak{a})t^{-G}\mathscr{A}^{\oplus r}_{\per |U}$. 
\end{corollary}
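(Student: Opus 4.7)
The plan is to exploit the fact that $\exp(-\frak{a})\,t^{-G}$, once a branch of $\log t$ is chosen on the interval $U$, provides a flat fundamental matrix for $\scr{E}^{\frak{a}}\otimes\scr{R}_{G|U}$, and thereby reduce the computation of $\Per$ to the trivial case already handled by Theorem \ref{O}.

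First, I would check by direct computation that the matrix $\exp(-\frak{a})\,t^{-G}$ is a flat frame. Using $\widetilde{\phi}_t(\log t)=\log t-\log(1+t)$ one finds $\widetilde{\phi}_t(t^{-G})=t^{-G}(1+t)^G$, and combining with $\widetilde{\phi}_t(\exp(-\frak{a}))=\exp(-\widetilde{\phi}_t(\frak{a}))$ and the formula $\widetilde{\psi}(v)=\exp(\widetilde{\phi}_t(\frak{a})-\frak{a})(1+t)^{-G}\widetilde{\phi}_t(v)$ for the automorphism on $\scr{E}^{\frak{a}}\otimes\scr{R}_{G|U}$, one obtains
\begin{equation*}
\widetilde{\psi}\bigl(\exp(-\frak{a})\,t^{-G}\,w\bigr)=\exp(-\frak{a})\,t^{-G}\,\widetilde{\phi}_t(w)\qquad (w\in\widetilde{\scr{O}}_{|U}^{\oplus r}).
\end{equation*}
In particular, left multiplication by the matrix $\exp(-\frak{a})\,t^{-G}$ defines an isomorphism of sheaves on $U$ intertwining $\nabla_{\widetilde{\phi}_t^{\oplus r}}$ with $\nabla_{\widetilde{\psi}}$, and it identifies the flat sections of $\scr{E}^{\frak{a}}\otimes\scr{R}_{G|U}$ in $\widetilde{\scr{O}}^{\oplus r}_{|U}$ with $\exp(-\frak{a})\,t^{-G}\cdot\widetilde{\scr{O}}_{\per|U}^{\oplus r}$.

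Next, I would transport the filtration through this gauge. The key point is that every entry of $t^{\pm G}=\exp(\pm G\log t)$ is a polynomial in $\log t$ with coefficients involving $t^{\lambda}$ for eigenvalues $\lambda$ of $G$, hence has moderate growth; so $t^{\pm G}\in\mathrm{Mat}_r(\scr{A}^{\leqslant 0}_{|U})$, and multiplication by $t^G$ is an $\scr{A}^{\leqslant 0}_{|U}$-linear bijection of $(\scr{A}^{\leqslant\frak{b}}_{|U})^{\oplus r}$ onto itself. Combined with the scalar identity $\exp(-\frak{a})\cdot\scr{A}^{\leqslant \frak{b}+\frak{a}}=\scr{A}^{\leqslant \frak{b}}$, this yields
\begin{equation*}
\scr{H}^0\DR_{\leqslant\frak{b}}(\scr{E}^{\frak{a}}\otimes\scr{R}_{G|U})=\exp(-\frak{a})\,t^{-G}\cdot\bigl(\scr{H}^0\DR_{\leqslant\frak{b}+\frak{a}}(\scr{O}_t(*0))\bigr)^{\oplus r}.
\end{equation*}
Summing over $\frak{b}\in\scr{I}(U)$ and invoking $\Per(\scr{O}_t(*0))=\scr{A}_\per$ from Theorem \ref{O} yields the claimed equality.

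The main obstacle, modest but worth verifying carefully, is the bookkeeping of the index shift $\frak{b}\mapsto\frak{b}+\frak{a}$ under the scalar gauge $\exp(-\frak{a})$, together with the verification that multiplication by the matrix $t^{\pm G}$ preserves $(\scr{A}^{\leqslant\frak{b}})^{\oplus r}$. The latter reduces to the estimate $|t^\lambda(\log t)^k|=O(|t|^{\Re\lambda}(\log|t|)^k)$ at $t=0$, which is precisely the moderate-growth condition defining $\scr{A}^{\leqslant 0}$.
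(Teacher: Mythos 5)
Your proposal is correct and follows essentially the same route as the paper: the paper's proof simply states the gauge isomorphism $(\scr{A}^{\leqslant \frak{b}+\frak{a}})^{\oplus r}\simeqto \scr{A}^{\leqslant\frak{b}}\otimes\scr{M}$, $\bm{f}\mapsto\exp(-\frak{a})t^{-G}\bm{f}$, and invokes Theorem \ref{O}, which is exactly the reduction you carry out. You just verify in more detail the flatness of $\exp(-\frak{a})t^{-G}$ and the moderate-growth bookkeeping that the paper leaves implicit.
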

\begin{proof}
The claim follows from 
Theorem \ref{O}
and the following isomorphism 
of $(\scr{A}^{\leqslant 0}_{|U},\widetilde{\phi}_t)$
-modules:
\begin{align*}
(\scr{A}
^{\leqslant \frak{b}+\frak{a}}
)^{\oplus r}
\simeqto 
\scr{A}^{\leqslant \frak{b}}
\otimes \scr{M},
\quad 
\bm{f}\mapsto \exp(-\frak{a})t^{-G}\bm{f}
\end{align*}
where we put 
$\bm{f}={}^t(f_1,\dots,f_r)$,
$\scr{M}=\scr{E}^{\frak{a}}
\otimes \scr{R}_{G|U}$, and 
$\frak{b}\in\scr{I}(U)$. 
\end{proof}
\begin{proof}[A proof of Corollary \ref{Cor}]
The fact 
that Theorem \ref{Vanishing}
implies 
Corollary \ref{Cor} easily follows from the 
isomorphism
\begin{align*}
    \DR_{\leqslant \frak{a}}(\scr{M})
    \simeqto 
    \DR_{\leqslant 0} (\scr{E}^{\frak{a}}\otimes \scr{M}_{|U}) 
\end{align*}
given by the multiplication of 
$\exp(-\frak{a})$. 
\end{proof}

\subsection{Existence of local splittings}
\label{RH-reg}
In this subsection,
we 
 prove the following:
\begin{theorem}\label{ThR}
If $\scr{M}\in \Diffc^\mild$, 
then $\RH(\scr{M})$ is a
Stokes filtered $\scr{A}_\per$-module. 
\end{theorem}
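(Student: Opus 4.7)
The plan is to reduce the general case to the graded case via a sectorial lifting of the formal isomorphism $\widehat{\scr{M}}\simeq\widehat{\scr{N}}$, where $\scr{N}$ is the graded module of $\scr{M}$ supplied by Lemma~\ref{Graded}. The two conditions defining a Stokes filtration---that $\gr\RH(\scr{M})$ is a local system of finitely generated $\C[u^{\pm 1}]$-modules and that $\RH(\scr{M})$ is locally split---will both follow from this reduction.

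I first verify the theorem when $\scr{M}=\scr{N}$ is mild graded. On a sufficiently small sector $U$ on which a branch of $\tau=t^{1/m}$ can be chosen, $\scr{N}_{|U}$ decomposes as a direct sum of summands of the form $\scr{E}^{\frak{a}_i}\otimes\scr{R}_{G_i|U}$ with $\frak{a}_i\in\scr{I}(U)$. Corollary~\ref{EM}, together with the obvious compatibility of $\Per$ with direct sums, computes $\Per(\scr{N})_{|U}$ as $\bigoplus_i \exp(-\frak{a}_i)t^{-G_i}\scr{A}_{\per|U}^{\oplus r_i}$, carrying a manifestly graded Stokes filtration. Gluing these local pieces (equivalently, descending the elementary decomposition on the ramified cover by the $\mu_m$-action as in Lemma~\ref{Graded}) shows that $\RH(\scr{N})$ is a graded Stokes filtered $\scr{A}_\per$-module: only finitely many $2\pi\i s\Z$-orbits of exponential factors appear, and by Corollary~\ref{Cor} each $\gr_{\frak{a}}\RH(\scr{N})$ is a local system of $\C$-vector spaces, so $\gr\RH(\scr{N})$ is a local system of finitely generated $\C[u^{\pm 1}]$-modules by Lemma~\ref{grLemma}.

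For arbitrary mild $\scr{M}$, the key ingredient is a sectorial lifting lemma: for every $x\in S^1$ there exist a neighborhood $U$ of $x$ and an isomorphism $\Phi_U\colon\scr{N}_{|U}\simeqto\scr{M}_{|U}$ of $(\scr{A}^{\leqslant 0}_{|U},\widetilde{\phi}_t)$-modules whose formalization recovers the given formal isomorphism. Granting $\Phi_U$, the naturality of the de Rham complex and of $\Per$ (see the remark following Corollary~\ref{Cor}) produces an isomorphism of pre-Stokes filtered $\scr{A}_{\per|U}$-modules $\RH(\scr{N})_{|U}\simeqto\RH(\scr{M})_{|U}$. Pushing the splitting of $\RH(\scr{N})_{|U}$ from the graded case through this isomorphism yields the local splitting required by condition~(2), and the induced identification $\gr\RH(\scr{M})_{|U}\simeq\gr\RH(\scr{N})_{|U}$ reduces condition~(1) to the graded case already handled.

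The main obstacle is establishing the sectorial lifting lemma, which is the mild difference analog of the Hukuhara--Turrittin--Sibuya theorem. My approach is in the Malgrange--Sibuya spirit: the internal hom $\homscr_{\scr{O}_t(*0)}(\scr{N},\scr{M})$ is again a mild difference module and the given formal isomorphism corresponds to a formally $\widetilde{\psi}$-invariant section of it. Applying Theorem~\ref{Vanishing} (or the variant for this internal hom) shows that such a formally flat section admits an asymptotically flat lift on every sufficiently narrow sector, and a successive correction argument of the type used in \cite{Sabbah} then upgrades this lift to a genuine isomorphism $\Phi_U\colon\scr{N}_{|U}\simeqto\scr{M}_{|U}$.
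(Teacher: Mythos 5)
Your overall strategy mirrors the paper's: both reduce to the graded case via a sectorial lifting of the formal isomorphism $\widehat{\scr{M}}\simeq\widehat{\scr{N}}$ and then invoke Corollary~\ref{EM} to exhibit the local splitting. Your treatment of the graded case is correct and essentially the computation from Corollary~\ref{EM}, Corollary~\ref{Cor} and Lemma~\ref{grLemma}.

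The gap is in your proposed proof of the sectorial lifting lemma. You want to derive it from Theorem~\ref{Vanishing} applied to $\homscr_{\scr{O}}(\scr{N},\scr{M})$, following the Malgrange--Sibuya correction scheme: take a Borel--Ritt lift of the formal isomorphism, observe that its $\nabla_{\widetilde{\psi}}$ is rapidly decaying, and solve $\nabla_{\widetilde{\psi}}g=\nabla_{\widetilde{\psi}}\Xi_0$ with $g$ rapidly decaying. The issue is that the paper's proof of Theorem~\ref{Vanishing} itself \emph{begins} by applying Theorem~\ref{CReg} (which is exactly your sectorial lifting lemma) to reduce a general mild module to the elementary sum $\bigoplus\scr{E}^{\frak{a}}\otimes\scr{R}_{G}$; only then are the integral estimates of Immink type carried out. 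For the internal hom $\homscr_{\scr{O}}(\scr{N},\scr{M})$ with $\scr{M}$ the unknown general mild module, the solvability of $\nabla_{\widetilde{\psi}}$ on rapid-decay sections is not available without first knowing that $\scr{M}$ locally splits, which is what you are trying to establish. So as written the argument is circular: lifting $\Leftarrow$ Vanishing $\Leftarrow$ CReg $=$ lifting.

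The paper breaks this cycle by treating the lifting as an imported black box, Theorem~\ref{CReg}, attributed to Braaksma--Faber and Immink (via van der Put--Singer), whose proof is an independent analytic construction of asymptotic solutions and does not pass through Theorem~\ref{Vanishing}. To repair your argument you would either have to cite that result directly as the paper does, or give a proof of the rapid-decay solvability of $\nabla_{\widetilde{\psi}}$ on $\homscr_{\scr{O}}(\scr{N},\scr{M})$ that does not presuppose a sectorial splitting of $\scr{M}$ (for instance a fixed-point or successive-approximation argument starting from the graded operator and treating $\scr{M}$ as a rapid-decay perturbation of $\scr{N}$, the estimates for which are precisely the hard analytic content of Braaksma--Faber/Immink). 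Your proposal gestures at ``a successive correction argument of the type used in Sabbah'' but does not supply those estimates, so the key analytic input is missing.
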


\subsubsection{Asymptotic expansions}
As a preliminary,
we recall the basic theory of asymptotic expansions. 
Let $\cal{A}$ be the subsheaf of 
$\widetilde{\scr{O}}$ whose sections have the 
asymptotic expansion in $\widehat{\scr{O}}(*0)$. 
By the Borel-Ritt theorem,
we have an exact sequence 
\begin{align}\label{BR}
    0\longrightarrow \scr{A}^{<0}
    \longrightarrow \cal{A}
    \xrightarrow{\asy}\varpi^{-1}\widehat{\scr{O}}{(*0)}
    {\longrightarrow} 0
\end{align}
where the symbol `$\asy$' denotes the asymptotic expansion. 
We note that the relation $\cal{A}\subset \scr{A}^{\leqslant 0}$
holds
as subsheaves of $\widetilde{\scr{O}}$.

Take a positive integer $m$
and let $\rho\colon S^1_\tau\to S^1_t$ 
be a finite covering 
as in \S \ref{Ram}. 
Take a local section 
$\frak{s}\colon U\to S^1_\tau$
of $\rho$ (i.e. fix a branch of $\tau=t^{1/m}$). 
Then we have an
exact sequence 
\begin{align}\label{pre-m}
    0\longrightarrow \scr{A}^{<0}_{t|U}
    \longrightarrow \cal{A}_{|U}^{(m)}
    \xrightarrow{\asy_{\frak{s}}}
    \widehat{\scr{O}}_\tau{(*0)}_{S^1|U}
    {\longrightarrow} 0
\end{align}
by the pull back of \eqref{BR},
where we put 
$\cal{A}^{(m)}\coloneqq
\rho_*\cal{A}\cap \scr{A}^{\leqslant 0}\subset 
\rho_*\widetilde{\scr{O}}_\tau $,
and 
$\asy_{\frak{s}}=\frak{s}^{-1}\asy$.

\subsubsection{Key fact}
Let $\scr{M}$
be a mild difference module. 
By Lemma \ref{Graded},
we have a graded module $\scr{N}$
of $\scr{M}$. 
Let $m$
be a positive integer and use the notations
in \S \ref{Ram} and \S \ref{Extended}. 
Assume that there exists an
isomorphism 
$\widehat{\Xi}\colon
\rho^*\widehat{\scr{M}}\simeqto
\rho^*\widehat{\scr{N}}$
of difference modules 
over 
$(\widehat{\scr{O}}_{\tau}(*0),\widehat{\phi}_\tau)$. 

\begin{theorem}
[{\cites{BF,Galois}}]\label{CReg}
Let $\scr{M},\scr{N},m,$ and $\widehat{\Xi}$
be as above. 
Then, for any point $e^{\i\theta}\in S^1_t$, 
there exist an open neighborhood $U$
of $e^{\i\theta}$, 
an isomorphism 
\begin{align*}
    \Xi_{U} &\colon 
(\cal{A}^{(m)}{\otimes}
\scr{M}_{S^1})_{|U}
\simeqto 
(\cal{A}^{(m)}{\otimes}
\scr{N}_{S^1})_{|U},
\end{align*}
and a section $\frak{s}:U\to S^1_\tau$ such that 
the formal completion
\begin{align*}
    \id_{\widehat{\scr{O}}_\tau(*0)}
    \otimes {\Xi}_{U}
    \colon   
    \rho^*\widehat{\scr{M}}
    \longrightarrow 
    \rho^*\widehat{\scr{N}}
\end{align*}
via $\asy_{\frak{s}}$
 coincides 
with $\widehat{\Xi}$,
where we have used the
identifications 
\begin{align*}
\widehat{\scr{O}}_\tau(*0)\otimes
    (\cal{A}^{(m)}{\otimes}
    \scr{M}_{S^1})_{|U}=\rho^*\widehat{\scr{M}},
    \text{ and }
\widehat{\scr{O}}_\tau(*0)\otimes
    (\cal{A}^{(m)}{\otimes}
    \scr{N}_{S^1})_{|U}=\rho^*\widehat{\scr{N}}. 
\end{align*}
\end{theorem}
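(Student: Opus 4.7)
The plan is to adapt the standard Hukuhara--Sibuya--Malgrange strategy to the difference setting, combining the Borel--Ritt surjection in the exact sequence~\eqref{pre-m} with the vanishing Theorem~\ref{Vanishing} applied to the internal-hom difference module $\homscr(\scr{M}, \scr{N})$.

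First, fix $e^{\i\theta}\in S^1_t$ and a small connected open neighborhood $U$ on which a section $\frak{s}\colon U\to S^1_\tau$ of $\rho$ exists; choose bases of $\scr{M}$ and $\scr{N}$ so that $\widehat{\Xi}$ is represented by an invertible matrix with entries in $\widehat{\scr{O}}_\tau(*0)$. Entry-wise application of the surjectivity of $\asy_{\frak{s}}$ in~\eqref{pre-m} yields an $\cal{A}^{(m)}_{|U}$-linear lift
\begin{align*}
\Xi^{(0)}\colon
(\cal{A}^{(m)}\otimes \scr{M}_{S^1})_{|U}
\longrightarrow
(\cal{A}^{(m)}\otimes \scr{N}_{S^1})_{|U}
\end{align*}
whose $\asy_{\frak{s}}$-expansion equals $\widehat{\Xi}$. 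The morphism $\Xi^{(0)}$ need not intertwine the difference structures, but the defect $\Delta\coloneqq \widetilde{\psi}_{\scr{N}}\circ \Xi^{(0)}-\Xi^{(0)}\circ \widetilde{\psi}_{\scr{M}}$ has zero formal expansion, since $\widehat{\Xi}$ is by assumption an intertwiner of $\widehat{\psi}_{\scr{M}}$ and $\widehat{\psi}_{\scr{N}}$. By exactness of~\eqref{pre-m}, $\Delta$ is therefore a section of $\scr{A}^{<0}_{|U}\otimes \homscr(\scr{M}_{S^1},\scr{N}_{S^1})$.

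Next, look for a correction $E$ with entries in $\scr{A}^{<0}_{|U}$ such that $\Xi_U\coloneqq \Xi^{(0)}+E$ intertwines the difference structures; this amounts to the equation $\widetilde{\psi}_{\scr{N}}\circ E - E\circ \widetilde{\psi}_{\scr{M}}=-\Delta$ in $\scr{A}^{<0}_{|U}\otimes \homscr(\scr{M}_{S^1},\scr{N}_{S^1})$. Post-composing the left-hand side with $\widetilde{\psi}_{\scr{M}}^{-1}$ identifies this operator with the differential $\nabla_{\widetilde{\psi}}$ of $\DR_{<0}(\homscr(\scr{M},\scr{N}))$. Since both $\scr{M}$ and $\scr{N}$ are mild, so is $\homscr(\scr{M},\scr{N})$: this is checked on the mild elementary summands $\scr{E}^{\frak{a}}\otimes \rho^*\scr{R}_G$, where the internal hom pairs $(\scr{E}^{\frak{a}},\scr{R}_G)$ with $(\scr{E}^{\frak{b}},\scr{R}_H)$ to produce a module of the same elementary type with exponential factor $\frak{b}-\frak{a}$ and regular-singular part coming from $G$ and $H$. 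Theorem~\ref{Vanishing} then gives $\scr{H}^1\DR_{<0}(\homscr(\scr{M},\scr{N}))=0$, i.e.~surjectivity of $\nabla_{\widetilde{\psi}}$, and the required $E$ exists after possibly shrinking $U$.

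Finally, $\Xi_U=\Xi^{(0)}+E$ is an $\cal{A}^{(m)}_{|U}$-module morphism intertwining $\widetilde{\psi}_{\scr{M}}$ and $\widetilde{\psi}_{\scr{N}}$ whose $\asy_{\frak{s}}$-expansion is $\widehat{\Xi}$ by construction. For the isomorphism property, note that $\det(\widehat{\Xi})$ is a unit in $\widehat{\scr{O}}_\tau(*0)$, so $\det(\Xi_U)\in \cal{A}^{(m)}(U)$ has a unit asymptotic expansion and is therefore non-vanishing near $\tau=0$ on the lifted sector, yielding an inverse with entries in $\cal{A}^{(m)}$ by Cramer's rule. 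The main obstacle is the third step: one must verify rigorously that the internal-hom construction preserves mildness in the sense of Definition~\ref{E&U} and that Theorem~\ref{Vanishing} does apply to $\homscr(\scr{M},\scr{N})$. Once that closure property is secured, the Borel--Ritt lift followed by the vanishing of $\scr{H}^1\DR_{<0}$ produces $\Xi_U$ in an essentially standard fashion.
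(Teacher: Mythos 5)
Your proposal takes a genuinely different route from the paper: the paper's proof of Theorem~\ref{CReg} is essentially a citation of the main asymptotic existence theorem for difference equations (Braaksma--Faber, van der Put--Singer, Immink): one represents $\widehat{\Xi}$ by a formal fundamental-matrix relation $\widehat{\phi}_\tau(\widehat{Y})=B\widehat{Y}A^{-1}$, invokes the quoted theorems to obtain a matrix $Y\in\GL_r(\cal{A}^{(m)}(U))$ with $\asy_{\frak{s}}(Y)=\widehat{Y}$ satisfying the same equation, and sets $\Xi_U\bm e=Y\bm f$. You instead try to \emph{derive} the lift from the Borel--Ritt sequence~\eqref{pre-m} and the vanishing Theorem~\ref{Vanishing} applied to $\homscr(\scr{M},\scr{N})$.

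Unfortunately, within this paper that route is circular. The paper's proof of Theorem~\ref{Vanishing} (in \S\ref{PRH}) opens with ``By Theorem~\ref{CReg}, we may assume \ldots'' and then reduces to Claim~\ref{Claim}, which is the direct Immink-type estimate \emph{only} for modules of the explicit elementary form $\scr{E}^{\frak{a}}\otimes\scr{R}_{G|U}$. Thus the vanishing of $\scr{H}^1\DR_{<0}$ for a general mild module --- and in particular for $\homscr(\scr{M},\scr{N})$ when $\scr{M}$ is mild but not graded --- is obtained in the paper \emph{as a consequence of} Theorem~\ref{CReg}, and cannot be used as an input to prove it. The direct estimate of \S\ref{PRH} is not available for $\homscr(\scr{M},\scr{N})$ because its fundamental matrix $Y(s)$ is not explicit when $\scr{M}$ is not already split.

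Two smaller remarks. First, the reduction of the intertwining condition to $\nabla_{\widetilde\psi}E=-\Delta\circ\widetilde\psi_{\scr M}^{-1}$ is correct, and the rapid decay of $\Delta$ via exactness of~\eqref{pre-m} is correct; the linear structure of the correction problem is right. Second, your observation that internal hom of mild elementary modules is mild elementary (exponential factor $\frak b-\frak a$, regular-singular part governed by $G$, $H$) is accurate and would establish that $\homscr(\scr{M},\scr{N})$ is mild. But none of this removes the logical dependence on Theorem~\ref{Vanishing}. To make your approach self-contained, you would have to prove the vanishing for $\homscr(\scr{M},\scr{N})$ independently of Theorem~\ref{CReg} --- for example by an iteration/fixed-point scheme in which each step involves the explicit integral operator $\Lambda_\theta$ applied to a module with controlled coefficients --- but that is essentially the content of the cited Braaksma--Faber and Immink theorems, which is exactly what the paper chooses to quote rather than reprove.
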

\begin{proof}
Take a basis 
$e_1,\dots,e_r$ of $\scr{M}$ over
$\scr{O}_t(*0)$. 
Take a basis $f_1,\dots,f_r$
of $\scr{N}$ over $\scr{O}_t(*0)$.
Let $\psi_{\scr{M}}$ and
$\psi_{\scr{N}}$ be the automorphisms on 
$\scr{M}$ and $\scr{N}$, respectively. 
Then we have
$\psi_{\scr{M}}
\bm{e}=
A(t) \bm{e}$
and 
$\psi_{\scr{N}}
\bm{f}=
B(t) \bm{f}$
where we put
$\bm{e}={}^t(e_1\ \cdots\ e_r)$
and $\bm{f}={}^t(f_1\ \cdots\ f_r)$. 
By assumption, there exists a invertible matrix 
$\widehat{Y}(\tau)
\in \GL_r(\widehat{\scr{O}}_\tau(*0))$
which
satisfies the difference equation
$\widehat{\phi}_{\tau}(\widehat{Y})({\tau})
=B(\tau^m)\widehat{Y}(\tau )A(\tau^m)^{-1}$. 
By \cite[Theorem 9.1, Theorem 11.1]{Galois}, 
which is essentially due to
Braaksma-Faber \cite[Theorem 4.1]{BF}
(or, more generally, 
\cite[Theorem 11.7, Remarks 11.9, Theorem 11.10,
Remark 11.11]{Galois},
which uses the result of Immink \cites{Immink, Immink2})
for any point 
$e^{\i\theta}\in S^1$
for sufficiently small 
$U$ and a choice
$\frak{s}\colon U\to S^1_\tau$
of the branch of $\tau=t^{1/m}$,
there exists a 
section
$Y(t)\in \GL_r(
\cal{A}^{(m)} (U))$
such that 
$\asy_{\frak{s}}(Y)=\widehat{Y}$ and 
$\widetilde{\phi}_{t}({Y})
(t)=
B(t)
{Y}(t)
A(t)^{-1}.$
Set $\Xi_{U}\bm{e}=Y(t)\bm{f}$. 
Then it follows that $\Xi_{U}$
is a morphism that satisfies the 
conditions.  
\end{proof}
\subsubsection{A proof of Theorem \ref{ThR}}
Let $\scr{M}$ be a 
mild difference module.
Let $\scr{N}$ be the graded module of $\scr{M}$. 
 
Then, by Theorem \ref{CReg}, for each point 
$e\in S^1_t$,
there is an open neighborhood 
$U$ of $e$
and 
an isomorphism
\begin{align*}
    \Xi_U\colon \scr{M}_{|U}\simeqto 
    \scr{N}_{|U}
\end{align*}
of difference modules over 
$(\scr{A}^{\leqslant 0}_{|U}, \widetilde{\phi}_t)$. 
We obtain the theorem
by Corollary \ref{EM}.
\qed

\subsection{Statement of the main theorem} 
We are now in the position 
to state the main result of 
this paper:

\begin{theorem}\label{Main}
The induced functor 
\begin{align*}
    \RH\colon \Diffc^\mild \to \Stokes,
    \quad
    \scr{M}\mapsto \RH(\scr{M})=
    (\Per(\scr{M}),\scr{H}^0 \DR_\bullet (\scr{M}))
\end{align*}
is an equivalence of categories. 
\end{theorem}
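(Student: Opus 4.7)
The plan is to first invoke Theorem \ref{ThR} so that $\RH$ indeed takes values in $\Stokes$, and then establish full faithfulness and essential surjectivity separately. Both reduce to cohomological identifications that build on Theorem \ref{Vanishing}, Corollary \ref{EM}, and the classification Theorem \ref{ClSt}.

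For full faithfulness, I would use that $\Hom_{\Diffc}(\scr{M},\scr{M}')$ equals the $\phi_t$-invariants of $\Hom_{\scr{O}_t(*0)}(\scr{M},\scr{M}')$, that is, the degree-zero cohomology of the de Rham complex of the internal $\homscr(\scr{M},\scr{M}')$, which is again mild (mildness being preserved under tensor and dual). On the Stokes side, a morphism of Stokes filtered $\scr{A}_\per$-modules is, by definition, a global section on $S^1$ of the subsheaf $\homscr(\RH(\scr{M}),\RH(\scr{M}'))_{\leqslant 0}$. Applying Corollary \ref{Cor} to $\homscr(\scr{M},\scr{M}')$ and taking global sections over $S^1$, one identifies this with $H^0(S^1,\scr{H}^0\DR_{\leqslant 0}(\homscr(\scr{M},\scr{M}')))$, and an argument using Theorem \ref{O} (extended to matrix coefficients) shows that this in turn equals $\Hom_{\Diffc}(\scr{M},\scr{M}')$.

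For essential surjectivity, I would run a Malgrange-Sibuya style argument. Given $(\scr{L},\scr{L}_\bullet)\in \Stokes$, the data of $\gr\scr{L}$ as a local system of $\C[u^{\pm 1}]$-modules together with the $2\pi\i s\Z$-orbit structure on $\Sigma(\scr{L})$ is exactly what is required to reconstruct a mild graded difference module $\scr{N}$ over $(\scr{O}_t(*0),\phi_t)$ with $\RH(\scr{N})\simeq \scr{G}\coloneqq \scr{A}_\per\otimes_{\C[u^{\pm 1}]}\gr\scr{L}$: each $2\pi\i s\Z$-orbit $[\frak{a}]$ equipped with its $\C$-local-system summand $\gr_\frak{a}\scr{L}$ (monodromy $G$) contributes a factor $\scr{E}^\frak{a}\otimes \rho^*\scr{R}_G$, which descends through Lemma \ref{Graded}, and the matching under $\RH$ is provided by Corollary \ref{EM}. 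By Theorem \ref{ClSt}, pairs $((\scr{L},\scr{L}_\bullet),\Xi_\gr)$ are then parametrized by $H^1(S^1,\aut^{<0}(\scr{G}))$. On the difference-module side, mild difference modules $\scr{M}$ equipped with a formal isomorphism $\widehat{\Xi}\colon \rho^*\widehat{\scr{M}}\simeqto \rho^*\widehat{\scr{N}}$ should be classified by an analogous non-abelian cohomology $H^1(S^1,\scr{G}^{<0}(\scr{N}))$, where $\scr{G}^{<0}(\scr{N})\coloneqq \id_{\scr{N}}+\scr{E}nd(\scr{N})^{<0}_{\mathrm{diff}}$ consists of automorphisms of $\scr{A}^{\leqslant 0}\otimes \scr{N}_{S^1}$ asymptotic to the identity and commuting with $\widetilde{\phi}_t$; the cocycles arise by comparing the local analytic lifts of $\widehat{\Xi}$ provided by Theorem \ref{CReg}.

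The crux of the argument is then to show that $\RH$ induces a bijection $H^1(S^1,\scr{G}^{<0}(\scr{N}))\simeq H^1(S^1,\aut^{<0}(\RH(\scr{N})))$. This will follow from an isomorphism of the underlying sheaves of (non-abelian) groups, which I would check fibrewise using Corollary \ref{Cor} applied to $\scr{E}nd(\scr{N})$ and the explicit description in Corollary \ref{EM}. The main obstacle is the bookkeeping around Remark \ref{Periodic}: the filtration $\scr{H}^0\DR_{\leqslant \bullet}$ is not exhaustive in $\scr{H}^0\widetilde{\DR}$, so one must consistently work with $\Per$ rather than the full horizontal-section sheaf, and simultaneously ensure that the $u$-equivariance on the Stokes side matches the $2\pi\i s\Z$-shift on exponential factors on the difference side. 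Once these compatibilities are verified on the level of the automorphism sheaves, the bijection of cohomology sets follows formally, and essential surjectivity is obtained by transporting a given class from one side to the other.
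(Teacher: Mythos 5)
Your proposal follows essentially the same route as the paper: full faithfulness is proved by identifying $\Hom_{\Diffc}(\scr{M},\scr{M}')$ with global sections of $\scr{H}^0\DR_{\leqslant 0}(\homscr(\scr{M},\scr{M}'))$ (the paper makes this precise via the projection formula $\DR(\scr{N})=\R\varpi_*\DR_{\leqslant 0}(\scr{N})$ together with Theorem \ref{Vanishing}, rather than Theorem \ref{O}), and essential surjectivity is proved by constructing the graded model, classifying both sides via $H^1$ of $\aut^{<0}$ sheaves (Theorems \ref{ClSt} and \ref{ClMe}, the latter relying on the classical Malgrange--Sibuya theorem which your outline leaves implicit), and transporting across the resulting cohomology bijection. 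Your proposed ``fibrewise check'' of the isomorphism of automorphism sheaves is the same content as the paper's appeal to full faithfulness, since in both cases the key input is the local sheaf isomorphism $\RH_{\leqslant 0}$ supplied by Theorem \ref{CReg} and Corollary \ref{EM}.
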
 
We give a proof of the fact that 
$\RH$ is fully faithful in \S \ref{FF}.
Then we use the fact to prove that 
$\RH$ is essentially surjective 
in \S \ref{ES}.
\subsection{Fully faithfulness}\label{FF}
We shall prove that the functor $\RH$ 
is fully faithful. 
\subsubsection{}
Let $\scr{M}=(\scr{M},\psi)$ and 
$\scr{M}'=(\scr{M},\psi')$ be  difference modules
over $(\scr{O}_t(*0),\phi_t)$. 
The 
module 
$\homscr_{\scr{O}}(\mathscr{M},\scr{M}')$
of morphisms of $\mathscr{O}_t(*0)$-modules
from $\scr{M}$ to $\scr{M}'$
is equipped with the automorphism 
$\psi_{\scr{M},\scr{M}'}(h)\coloneqq  
\psi'\circ h\circ \psi^{-1}$
for $h\in \homscr_{\scr{O}}(\mathscr{M},\scr{M}')$.
For simplicity of the notations, 
we set
$(\scr{L},\scr{L}_{\bullet})\coloneqq
(\Per(\scr{M}),\scr{H}^0\DR_\bullet(\scr{M}{}))$
and 
$(\scr{L}',\scr{L}'_\bullet)
\coloneqq
(\Per(\scr{M}'),\scr{H}^0\DR_\bullet(\scr{M}')).
$
\begin{lemma}\label{RH_0}
There is a natural morphism
\begin{align*}
    \RH_{\leqslant 0}\colon 
    \scr{H}^0\DR_{\leqslant 0}(\homscr_{\scr{O}}
    (\scr{M},\scr{M}'))\longrightarrow
    \homscr(\scr{L}, \scr{L}')_{\leqslant0}.
\end{align*}
% such that $\Gamma(S^1,\RH_{\leqslant 0})
% =\RH\colon \Hom(\scr{M},\scr{M}')\to \Hom(\scr{L},\scr{L}')$. 
\end{lemma}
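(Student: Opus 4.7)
The plan is to unpack the source of $\RH_{\leqslant 0}$ as the space of $\scr{A}^{\leqslant 0}$-linear, $\widetilde{\phi}_t$-intertwining morphisms between $\scr{A}^{\leqslant 0}\otimes\scr{M}_{S^1}$ and $\scr{A}^{\leqslant 0}\otimes \scr{M}'_{S^1}$, then transport each such morphism across the functors $\DR_{\leqslant\frak{b}}$ for every $\frak{b}\in\scr{I}(U)$ and assemble the resulting pieces on $\scr{H}^0$ into a single morphism lying in $\homscr(\scr{L},\scr{L}')_{\leqslant 0}$.

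First I would observe that, because $\scr{M}$ and $\scr{M}'$ are free of finite rank over $\scr{O}_t(*0)$, the natural map
\[
\scr{A}^{\leqslant 0}\otimes_{\scr{O}_t(*0)_{S^1}} \homscr_\scr{O}(\scr{M},\scr{M}')_{S^1} \simeqto \homscr_{\scr{A}^{\leqslant 0}}\bigl(\scr{A}^{\leqslant 0}\otimes \scr{M}_{S^1},\,\scr{A}^{\leqslant 0}\otimes\scr{M}'_{S^1}\bigr)
\]
is an isomorphism; under this identification a section killed by $\nabla_{\widetilde{\psi}_{\scr{M},\scr{M}'}}$ corresponds precisely to an $\scr{A}^{\leqslant 0}$-linear morphism $\widetilde{h}$ satisfying $\widetilde{h}\circ\widetilde{\psi}=\widetilde{\psi}'\circ\widetilde{h}$. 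Thus sections of $\scr{H}^0\DR_{\leqslant 0}(\homscr_\scr{O}(\scr{M},\scr{M}'))$ are intrinsically morphisms of sheaves of difference modules over $(\scr{A}^{\leqslant 0},\widetilde{\phi}_t)$.

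Next, for each $\frak{b}\in\scr{I}(U)$, I would tensor $\widetilde{h}$ on $U$ by $\scr{A}^{\leqslant\frak{b}}$ over $\scr{A}^{\leqslant 0}$. Because $\scr{A}^{\leqslant\frak{b}}\subset\widetilde{\scr{O}}$ is $\widetilde{\phi}_t$-stable---this is where the mildness of the exponential factors enters, since for $\frak{b}$ of the form \eqref{q(tau)} the difference $\widetilde{\phi}_t(\frak{b})-\frak{b}$ is holomorphic at the origin---this produces a morphism of two-term complexes
\[
\widetilde{h}_\frak{b}\colon \DR_{\leqslant\frak{b}}(\scr{M})\longrightarrow \DR_{\leqslant\frak{b}}(\scr{M}').
\]
Applying $\scr{H}^0$ yields morphisms $\scr{L}_{\leqslant\frak{b}}\to \scr{L}'_{\leqslant\frak{b}}$ compatible with restriction $V\subset U$ and with the monotonicity inclusions $\scr{L}_{\leqslant\frak{b}_1}\subset\scr{L}_{\leqslant\frak{b}_2}$ when $\frak{b}_1\leqslant_U\frak{b}_2$. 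Summing over $\frak{b}$ and using the definition of $\Per$ then assembles a single $\scr{A}_\per$-linear morphism $\scr{L}\to \scr{L}'$ sending $\scr{L}_{\leqslant\frak{b}}$ into $\scr{L}'_{\leqslant\frak{b}}$ for every $\frak{b}$; specializing $\frak{a}=0$ in the definition of the internal-hom filtration shows this is precisely a section of $\homscr(\scr{L},\scr{L}')_{\leqslant 0}$.

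The resulting assignment $h\mapsto \RH_{\leqslant 0}(h)$ is $\scr{A}_\per^{\leqslant 0}$-linear and functorial in $(\scr{M},\scr{M}')$. The main obstacle is not conceptual but bookkeeping: one must check that $\widetilde{h}_\frak{b}$ is genuinely a morphism of complexes of $(\scr{A}^{\leqslant 0},\widetilde{\phi}_t)$-modules and that its $\scr{H}^0$ lands inside $\scr{L}'_{\leqslant\frak{b}}\subset \Per(\scr{M}')$, both of which follow from $\widetilde{\phi}_t$-stability of $\scr{A}^{\leqslant\frak{b}}$ and from the fact that tensoring the source DR complex by $\scr{A}^{\leqslant\frak{b}}$ is naturally identified with $\DR_{\leqslant\frak{b}}$ applied to the scalar-extended module, after which everything reduces to the formal compatibility between $\nabla_{\widetilde{\psi}}$ and the intertwining identity satisfied by $\widetilde{h}$.
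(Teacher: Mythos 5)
Your proposal is correct and takes essentially the same route as the paper: identify a section of $\scr{H}^0\DR_{\leqslant 0}(\homscr_{\scr{O}}(\scr{M},\scr{M}'))$ with an $\scr{A}^{\leqslant 0}$-linear, $\widetilde{\phi}_t$-equivariant morphism $\scr{M}_{|U}\to\scr{M}'_{|U}$, note that it preserves each $\scr{A}^{\leqslant\frak{a}}\otimes(\text{--})$-submodule and hence each $\scr{L}_{\leqslant\frak{a}}$, and read this off as a local section of $\homscr(\scr{L},\scr{L}')_{\leqslant 0}$. The paper's proof is more terse (it does not spell out the tensor-hom identification for free modules nor flag the $\widetilde{\phi}_t$-stability of $\scr{A}^{\leqslant\frak{a}}$, which is implicitly ensured once the complexes $\DR_{\leqslant\frak{a}}$ are well defined), but the underlying argument is the same.
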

\begin{proof}
A local section on an open subset $U$ of  
$\scr{H}^0\DR_{\leqslant 0}(\homscr_{\scr{O}}(\scr{M},\scr{M}'))$
is regarded as a morphism
\begin{align*}
    \xi\colon \scr{M}_{|U}
    \to \scr{M}_{|U}
\end{align*}
which is compatible with the difference operators. 
For each $\frak{a}\in\scr{I}(U)$,
it also sends 
$\scr{A}^{\leqslant \frak{a}}
\otimes \scr{M}_{|U}$
into 
$\scr{A}^{\leqslant \frak{a}}
\otimes\scr{M}'_{|U}$.
It follows that 
$\xi$ sends $\scr{L}_{\leqslant \frak{a}}$
into $\scr{L}'_{\leqslant\frak{a}}$.
Hence it defines
a local section 
$\RH_{\leqslant 0}(\xi)
\in \homscr(\scr{L},\scr{L'})_{\leqslant 0}$. 
\end{proof}
\subsubsection{}\label{RH^tau}
For each point 
$e\in S^1_t$, there is an open neighborhood 
$U$ of $e$ 
%and section $\frak{s}\colon U\to S^1_\tau$
such that 
$\RH_{\leqslant 0}$
is isomorphic to 
the direct sum of the morphisms of the form
\begin{align*}
    &\scr{H}^0\DR_{\leqslant 0}
    (\homscr
    (\scr{E}^{\frak{a}}\otimes\scr{R}_{G|U},
    \scr{E}^{\frak{b}}\otimes \scr{R}_{H|U}))\\
    &\longrightarrow 
    \homscr(\exp(-\frak{a})
    t^{-G}\scr{A}^{\oplus r}_{\per|U}, 
    \exp(-\frak{b})
    t^{-H}\scr{A}^{\oplus r'}_{\per|U})
    _{\leqslant 0}
\end{align*}
where $\frak{a},\frak{b}\in \scr{I}(U)$,
$G\in \End(\C^r)$, and 
$H\in \End(\C^{r'})$ by Theorem \ref{CReg}.  
It then follows that 
the morphism $\RH_{\leqslant 0}$
is 
an isomorphism. 
\subsubsection{}
Let us set
$\scr{N}=\homscr_{\scr{O}}(\scr{M},\scr{M}')$
and we define $\DR(\scr{N})$ by the complex
$\scr{N}\xrightarrow{\psi-\id} \scr{N}$
in degree zero and one, where $\psi$ denotes
the automorphism on $\scr{N}$ naturally induced
from $\scr{M}$ and $\scr{M}'$. 
Then we have $\scr{H}^0\DR(\scr{N})=
\Hom(\scr{M},\scr{M}')$. 
By the projection formula,
we have
$\DR(\scr{N})=\R\varpi_*\DR_{\leqslant 0}(\scr{N})$. 
By Theorem \ref{Vanishing},
we obtain 
$\R\varpi_*\DR_{\leqslant 0}(\scr{N})
=\R\varpi_*\scr{H}^0\DR_{\leqslant 0}(\scr{N})$
and hence 
\begin{align*}
    \Hom(\scr{M},\scr{M}')=
    \varpi_*\scr{H}^0\DR_{\leqslant 0}
    (\homscr_{\scr{O}}(\scr{M},\scr{M}')).
\end{align*}
Then, the morphism
$\RH=\varpi_*\RH_{\leqslant 0}$
gives an isomorphism between 
$\Hom(\scr{M},\scr{M}')$
and 
$\varpi_*\homscr(\scr{L},\scr{L}')_{\leqslant 0}
=\Hom_{\Stokes}(\scr{L},\scr{L}')$.
\subsection{Essential surjectivity}\label{ES}
To complete the proof of 
Theorem \ref{Main}, 
we shall prove the essential surjectivity of 
the functor $\RH$. 
\subsubsection{Graded case}
We shall firstly show the 
essential surjectivity of
$\RH$ in the graded case:

\begin{lemma}\label{ESG}
Let $\scr{G}$ be a graded 
Stokes filtered $\mathscr{A}_\per$-module. 
    There exists 
    a mild graded difference module 
    $\scr{N}$ over $(\scr{O}_t(*0),\phi_t)$
    such that  
    $\RH(\scr{N})\simeq \scr{G}$. 
\end{lemma}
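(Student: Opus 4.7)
The plan is to build $\scr{N}$ by first constructing an explicit mild elementary module on a finite cover from the orbit decomposition of $\gr(\scr{G})$, and then descending via $\mu_m$-equivariance. Since $\gr(\scr{G})$ is a local system of finitely generated free $\C[u^{\pm 1}]$-modules on the compact circle $S^1_t$ and $\Sigma(\scr{G})$ has locally finitely many $2\pi\i s\Z$-orbits, there exists a positive integer $m$ with $\Sigma(\scr{G})\subset \scr{I}_m$. Pulling back by the $m$-fold cover $\rho=\rho_m\colon S^1_\tau\to S^1_t$, the pull-back $\rho^{-1}\scr{I}_m$ is contained in $\scr{I}_{m,\tau}$, whose sections are globally polynomials in $\tau^{-1}$ of degree at most $m$; hence one can choose global representatives $\frak{a}_1,\dots,\frak{a}_N\in\C[\tau^{-1}]_{\leq m}$ for the $2\pi\i s\Z$-orbits in $\rho^{-1}\Sigma(\scr{G})$.

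For each $j$ set $V_j\coloneqq \gr_{\frak{a}_j}(\rho^{-1}\scr{G})$, which by the definition of a graded Stokes filtered module is a local system of finite-dimensional $\C$-vector spaces on $S^1_\tau$; let $T_j$ denote its monodromy. Via the Jordan decomposition and a fixed branch of the logarithm, choose $G_j\in\End(V_j)$ with $\exp(-2\pi\i m G_j)=T_j$, and set
\begin{align*}
    \scr{N}'\coloneqq \bigoplus_{j=1}^N \scr{E}^{\frak{a}_j}\otimes \rho^*\scr{R}_{G_j},
\end{align*}
a mild elementary module over $(\scr{O}_\tau(*0),\phi_\tau)$. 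Applying Corollary \ref{EM} summand by summand, together with the fact that the flat local system of $\rho^*\scr{R}_{G_j}$ on $S^1_\tau$ is generated by $\tau^{-mG_j}$ and so has monodromy $\exp(-2\pi\i m G_j)=T_j$, one obtains a natural isomorphism between the Stokes filtered $\rho^{-1}\scr{A}_\per$-module associated to $\scr{N}'$ and $\rho^{-1}\scr{G}$, matching both the filtration by $\exp(-\frak{a}_j)\tau^{-mG_j}\scr{A}_\per^{\oplus r_j}$ and the graded local systems.

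For the descent, observe that $\rho^{-1}\scr{G}$ carries a canonical $\mu_m$-equivariant structure (inherited from $\scr{G}$ being defined on $S^1_t$), and this action permutes the orbit representatives via $\sigma_\zeta^*\frak{a}_j=\frak{a}_{\pi_\zeta(j)}$ for some permutation $\pi_\zeta$ of $\{1,\dots,N\}$. Transferring the $\mu_m$-action through the isomorphism built above endows $\scr{N}'$ with a $\mu_m$-equivariant structure, and the descent procedure in the proof of Lemma \ref{Graded} then produces a mild graded difference module $\scr{N}$ over $(\scr{O}_t(*0),\phi_t)$ with $\rho^*\scr{N}\simeq \scr{N}'$. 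The isomorphism $\RH(\scr{N})\simeq \scr{G}$ follows by $\mu_m$-descent applied to the equivariant isomorphism at the level of the cover, using the compatibility of $\RH$ with pull-back along $\rho$.

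The main obstacle is ensuring that the matrices $G_j$ can be chosen compatibly with the $\mu_m$-equivariant structure required for the descent. When $\pi_\zeta(j)=j'$ with $j\neq j'$, the pairs $(V_j,G_j)$ and $(V_{j'},G_{j'})$ must be related by pull-back through $\sigma_\zeta$, which forces a coherent choice of logarithm branches along each $\mu_m$-orbit in $\{1,\dots,N\}$; when a $\mu_m$-orbit has non-trivial stabilizer, the corresponding $V_j$ carries a stabilizer action that must commute with $G_j$. Both constraints can be met by first choosing $G_j$ for one representative in each $\mu_m$-orbit, propagating the choice along the orbit via $\sigma_\zeta$, and simultaneously diagonalizing the monodromy and the stabilizer action on $V_j$ thanks to their commutativity.
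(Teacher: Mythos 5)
Your proof takes essentially the same route as the paper: choose $m$ with $\Sigma(\scr{G})\subset\scr{I}_m$, pull back to $S^1_\tau$, pick orbit representatives $\frak{a}_i$, form $\scr{N}'=\bigoplus\scr{E}^{\frak{a}_i}\otimes\rho^*\scr{R}_{G_i}$ with $G_i$ reproducing the monodromy of $\gr_{\frak{a}_i}(\rho^{-1}\scr{G})$, and descend by $\mu_m$-equivariance using Lemma \ref{Graded} and Corollary \ref{EM}. The only difference is that you spell out the compatibility constraints on the choice of the $G_j$ (coherent branch of $\log$ along $\mu_m$-orbits, commutation with stabilizer actions) that the paper's phrase ``by the $\mu_m$-invariant construction'' leaves implicit — a worthwhile clarification, though note $T_j$ need not be diagonalizable, so ``simultaneously diagonalizing'' should read as putting the commuting pair in a compatible normal form.
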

\begin{proof}
Take a positive integer 
$m$ such that $\Sigma(\scr{G})\subset \scr{I}_m$. 
Then 
$\rho^{-1}\Sigma(\scr{G})\subset 
\rho^{-1}\scr{I}_m=\scr{I}_{m,\tau}$
is a trivial sheaf of finitely many
$ (2\pi\i \Z s)$-orbits, 
which admits a $\mu_m$-action.
Take a section $\frak{a}_i$ for each $i=1,\dots,\ell$
in the orbit, so that 
\[\rho^{-1}_m\Sigma(\scr{G})
=\bigsqcup_{i=1}^\ell (\frak{a}_i+2\pi\i s \Z_{S^1}).\] 
For each $i$, 
let $\gr_{\frak{a}_{i}}(\rho^{-1}\scr{G})$ be
a local system of 
$\C$-vector space 
defined as follows:
\begin{align*}
    \gr_{\frak{a}_{i}}(\rho^{-1}\scr{G})(U)=
    \gr_{\frak{a}_{i,U}}\scr{G}(\rho(U))
\end{align*}
where $U$ is an open subset 
such that $\rho_{|U}$ is a homeomorphism
and $\frak{a}_{i,U}\in \scr{I}_m(U)$
is a section such that 
$\rho_{|U}^*\frak{a}_{i,U}=\frak{a}_{i|U}$.
Then, there is a matrix $G_i$
such that the monodromy of $\gr_{\frak{a}_i}(\scr{G})$
is given by $\exp(2\pi \i m G_i)$ for some basis at 
a fiber. 
Set 
$\scr{N}'\coloneqq 
\bigoplus_{i}\scr{E}^{\frak{a}_i}\otimes \rho^*\scr{R}_{G_i}$.
Then, by the $\mu_m$-invariant construction given above, 
there is a 
mild graded difference module 
$\scr{N}$ such that $\rho^*\scr{N}\simeq \scr{N}'$. 
We can easily check that $\RH(\scr{N})\simeq \scr{G}$. 
\end{proof}

\subsubsection{Classification theorem}\label{MSthm}
%This section
For a difference module $\scr{N}$ over 
$(\scr{O}_t(*0),\phi_t)$, we set 
\begin{align*}
    \aut^{<0}(\scr{N})
    =\id_{\scr{N}}+
    \scr{H}^0\DR_{<0}(\scr{E}nd(\scr{N})).
\end{align*}
The following Malgrange-Sibuya type classification theorem
for difference modules plays a key role 
in the proof of essential surjectivity of $\RH$:
\begin{theorem}\label{ClMe}
Let $\scr{N}$ be a mild graded
difference module over 
$(\scr{O}_{t}(*0),\phi_t)$. 
Let 
$m$ be a positive integer 
such that $\rho^*_m\scr{N}$ is 
mild elementary. 
Then there is a natural one-to-one correspondence 
between the set
\begin{align*}
    H^1(S^1 ,\aut^{<0}(\scr{N}))
\end{align*}
and 
the set of isomorphism classes of  
pairs
$(\scr{M},\widehat{\Xi})$ of 
\begin{itemize}
    \item a mild difference module 
    $\scr{M}$ over $(\scr{O}_t(*0),\phi_t)$,
    and 
    \item an isomorphism 
    $\widehat{\Xi}\colon \rho^*\widehat{\scr{M}}
    \simeqto \rho^*\widehat{\scr{N}}$
    of difference modules over 
    $(\widehat{\scr{O}}_\tau(*0),\widehat{\phi}_\tau)$.
\end{itemize}
\end{theorem}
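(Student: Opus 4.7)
The proof I would give follows the classical Malgrange–Sibuya paradigm, adapted to the difference setting, with Theorem \ref{CReg} playing the role of the sectorial lifting theorem. Fix a positive integer $m$ such that $\rho_m^*\scr{N}$ is mild elementary.

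For the map from pairs to cohomology, start with a pair $(\scr{M},\widehat{\Xi})$. By Theorem \ref{CReg} applied at every point $e^{\i\theta}\in S^1_t$, there exists a finite open covering $S^1_t=\bigcup_{\alpha\in\Lambda}U_\alpha$ by intervals (with the combinatorial structure used in the proof of Theorem \ref{ClSt}), local sections $\frak{s}_\alpha\colon U_\alpha\to S^1_\tau$, and isomorphisms
\begin{align*}
    \Xi_\alpha\colon (\cal{A}^{(m)}\otimes\scr{M}_{S^1})_{|U_\alpha}
    \simeqto
    (\cal{A}^{(m)}\otimes\scr{N}_{S^1})_{|U_\alpha}
\end{align*}
of difference modules over $(\cal{A}^{(m)}_{|U_\alpha},\widetilde{\phi}_t)$ whose formal completion via $\asy_{\frak{s}_\alpha}$ is $\widehat{\Xi}$. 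On an intersection $U_{\alpha\beta}:=U_\alpha\cap U_\beta$, the automorphism $g_{\alpha\beta}:=\Xi_\beta\circ\Xi_\alpha^{-1}$ of $(\cal{A}^{(m)}\otimes\scr{N}_{S^1})_{|U_{\alpha\beta}}$ satisfies $\asy_{\frak{s}}(g_{\alpha\beta}-\id)=0$; by the exact sequence \eqref{pre-m}, the difference $g_{\alpha\beta}-\id$ has entries in $\scr{A}^{<0}$, and compatibility with $\widetilde{\phi}_t$ places it in $\scr{H}^0\DR_{<0}(\scr{E}nd(\scr{N}))$. Thus $\{g_{\alpha\beta}\}$ is a Čech $1$-cocycle in $\aut^{<0}(\scr{N})$, and a different choice of local isomorphisms differs by a coboundary.

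For the inverse construction, start with a cocycle $\{g_{\alpha\beta}\}$ valued in $\aut^{<0}(\scr{N})$. Glue the sheaves $(\cal{A}^{(m)}\otimes \scr{N}_{S^1})_{|U_\alpha}$ along the $g_{\alpha\beta}$; because each $g_{\alpha\beta}$ commutes with $\widetilde{\phi}_t$, the result is a sheaf $\widetilde{\scr{M}}$ on $S^1_t$ of difference modules over $(\cal{A}^{(m)},\widetilde{\phi}_t)$ that is locally isomorphic to $\cal{A}^{(m)}\otimes\scr{N}_{S^1}$. Since every $g_{\alpha\beta}$ is formally the identity, the formal completions patch to give a canonical isomorphism $\widehat{\Xi}\colon \rho^*\widehat{\scr{M}'}\simeqto \rho^*\widehat{\scr{N}}$, where $\scr{M}'$ is any convergent realization. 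The remaining and main step is to extract an actual mild difference module $\scr{M}$ over $(\scr{O}_t(*0),\phi_t)$ from $\widetilde{\scr{M}}$. The plan is to push $\widetilde{\scr{M}}$ forward along $\varpi\colon\widetilde{\Delta}_t\to\Delta_t$ using the extended sheaf of moderate-growth holomorphic sections to obtain a locally free $\scr{O}_{\Delta_t}(*0)$-module on $\Delta_t$, equipped with an induced automorphism $\psi$ coming from the $\widetilde{\phi}_t$-structure; taking germs at $0$ yields the desired $(\scr{M},\psi)$. Mildness follows because the graded pieces are of the mild form and the transition cocycles are rapidly decaying, while the formal isomorphism $\widehat{\Xi}$ survives by construction.

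Finally, one checks that the two constructions are mutually inverse: starting from $(\scr{M},\widehat{\Xi})$ and computing the glued module recovers $\scr{M}$ because global sections over $S^1$ of $\cal{A}^{(m)}\otimes\scr{M}_{S^1}$ that are flat for $\widetilde{\phi}_t$ and have prescribed asymptotics reproduce $\scr{M}$; conversely, starting from a cocycle and then computing local trivializations of the glued module reproduces the cocycle. The main obstacle is the descent step in the previous paragraph: showing that the analytically glued sheaf $\widetilde{\scr{M}}$ comes from a convergent-coefficient difference module in the sense of Definition \ref{E&U}. This is the difference-equation counterpart of the classical analytic existence theorem used in the Deligne–Malgrange proof, and it relies crucially on the asymptotic lifting theorem of Braaksma–Faber and Immink invoked in Theorem \ref{CReg}, which guarantees that local sectorial flat sections have convergent analogues after passing to the appropriate covering. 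Once this step is available, together with the already-established fully faithfulness used implicitly to identify morphism spaces, the bijection between $H^1(S^1,\aut^{<0}(\scr{N}))$ and isomorphism classes of pairs $(\scr{M},\widehat{\Xi})$ follows by standard Čech-theoretic bookkeeping.
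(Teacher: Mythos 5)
Your forward direction (from a pair to a cohomology class) is correct and follows the intended line: apply Theorem \ref{CReg} to obtain local sectorial lifts $\Xi_\alpha$ of $\widehat{\Xi}$, and observe via the exact sequence \eqref{pre-m} that $g_{\alpha\beta}=\Xi_\beta\circ\Xi_\alpha^{-1}$ lies in $\aut^{<0}(\scr{N})$. The gap is in the reverse direction. You correctly flag the descent step --- producing a convergent mild difference module from the glued sectorial sheaf $\widetilde{\scr{M}}$ --- as the crux, but you attribute its solution to the Braaksma--Faber/Immink asymptotic lifting theorem. That theorem is precisely what powers Theorem \ref{CReg}, i.e.\ the forward direction; it does not on its own produce a convergent object from a cocycle of flat germs. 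The tool the paper actually invokes here is the Malgrange--Sibuya theorem: after fixing a frame $\scr{N}=\bigoplus_i\scr{O}_t(*0)e_i$, the cocycle $g=(g_\alpha)$ is regarded as a cocycle in $\GL_r(\cal{A}^{(m)})$ whose class in $H^1(S^1,\GL_r(\cal{A}^{(m)}))$ is trivial; one then writes $g_\alpha=h_\alpha^{-1}h_{\alpha+1}$ with $h_\alpha\in\GL_r(\cal{A}^{(m)}(U_\alpha))$, sets $\widetilde{A}_\alpha=\widetilde{\phi}_t(h_\alpha)A^0 h_\alpha^{-1}$ where $\psi_{\scr{N}}=A^0\phi_t^{\oplus r}$, and checks that the $\widetilde{A}_\alpha$ agree on overlaps, so they glue to a genuine convergent matrix $A$ defining $\scr{M}$ with $\widehat{\Xi}$ the identity. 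Your gluing-and-pushforward reformulation is consistent with this, but without Malgrange--Sibuya (or an equivalent $H^1$-vanishing statement for $\cal{A}^{(m)}$) the claim that $\varpi_*\widetilde{\scr{M}}$ is locally free over $\scr{O}_{\Delta_t}(*0)$ is exactly what remains unproved, so the argument does not close.
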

\begin{proof}
This theorem is essentially proved in 
\cite[Theorem 11.12]{Galois}.
We shall recall the construction of 
the pair
$(\scr{N},\widehat{\Xi})$ from the cohomology 
class $[g]$
for the convenience of the reader. 
Take a finite open covering 
$S^1=\bigcup_{\alpha\in \Lambda} U_\alpha$ 
such that 
\begin{itemize}
\item 
$\Lambda=\Z/N\Z$ for a positive integer $N$
such that, 
\item
$U_\alpha$ is an open interval for each 
$\alpha\in \Lambda$, and
\item
$U_\alpha\cap U_\beta=\emptyset$ for 
$\beta\notin \{\alpha-1,\alpha,\alpha+1\}$,
\end{itemize}
and a representative 
$g=(g_\alpha)_\alpha\in
\prod_{\alpha\in \Lambda}
H^0(V_{\alpha},
\aut^{<0}(\scr{N}))$,
where
$V_{\alpha}\coloneqq U_{\alpha}\cap U_{\alpha+1}$.
Fix a frame 
$\scr{N}= 
\bigoplus_{i=1}^r \scr{O}_t(*0)e_i$, 
then, for each $\alpha$, 
the section $g_\alpha$ is
identified with a section $(g_{\alpha,ij})\in
\GL_r(\cal{A}^{(m)}(V_{\alpha}))$
via $g_\alpha(e_i)=\sum_jg_{\alpha,ij}e_j$. 
Then we obtain 
the map 
\[H^1(S^1, \aut^{<0}(\scr{N}))
\to H^1(S^1,\GL_r(\cal{A}^{(m)})).\]
By the Malgrange-Sibuya theorem,
this map is trivial. 
It implies that 
there exists a family 
$(h_\alpha)_\alpha\in 
\prod_{\alpha\in\Lambda} 
H^0(U_\alpha, \GL_r(\cal{A}^{(m)}))$
such that 
$g_\alpha=h_{\alpha}^{-1} h_{\alpha+1}$
on $V_{\alpha}$ 
(taking finer covering if necessary). 
Let $\psi$ be 
the difference operator on $\scr{N}$. 
Using the frame,  we have 
$\psi=A^0(t)\phi_t^{\oplus r} $
for some $A^0(t)\in \GL_r(\scr{O}_t(*0))$.
Note that we have the equality
$ \widetilde{\phi}_t(g_\alpha)
=A^{0}g_\alpha(A^0)^{-1}$.
We then set 
\[\widetilde{A}_\alpha (t)
=\widetilde{\phi}_t(h_{\alpha})
A^0(t)h_\alpha^{-1}
\in \GL_r(\cal{A}^{(m)}(U_\alpha))\]
for each $\alpha\in \Lambda$. 
Since we have
\begin{align*}
    \widetilde{A}_{\alpha}\widetilde{A}_{\alpha+1}^{-1}
    =\widetilde{\phi}_\tau(h_{\alpha})
A^0(t)h_\alpha^{-1}
h_{\alpha+1}
A^0(t)^{-1}\widetilde{\phi}_\tau(h_{\alpha+1}^{-1})
=\id
\end{align*}
on $V_{\alpha}$, 
there exists a unique $A(t)\in \GL_r(\scr{O}_\tau(*0))$
such that $A(t)_{|U_\alpha}=\widetilde{A}_\alpha$. 
Then $\psi\coloneqq A(t)\phi_t^{\oplus r}$ on 
$\scr{M}\coloneqq
\scr{N}
=\bigoplus_{i=1}^r \scr{O}_\tau(*0)e_i$
defines a new difference module 
$(\scr{N},\psi)$ with the trivial isomorphism
$\widehat{\Xi}$ between the formal completions. 
\end{proof}

\subsubsection{End of the proof of Theorem \ref{Main}
$($Essential surjectivity$)$}
Let $(\scr{L},\scr{L}_{\bullet})$ be an object 
in $\Stokes$. 
There exists a positive integer 
$m$ such that 
$ \Sigma(\scr{L})\subset \scr{I}_m$. 
By Theorem \ref{ClSt}, 
the pair $(\scr{L},\scr{L}_{\bullet})$
corresponds to a cohomology class
$[\scr{L},\scr{L}_{\bullet}]$
in 
$H^1(S^1,
\aut^{<0}(\scr{G}))$
with $\scr{G}=\scr{A}_{\per}\otimes \gr\scr{L}$. 
There exists a
mild difference module 
$\scr{N}$ over $(\scr{O}_t(*0),\phi_t)$
such that $\RH(\scr{N})=\scr{G}$ by Lemma \ref{ESG}.
By the fully-faithfulness of $\RH$, 
we have an isomorphism 
\begin{align*}
    H^1(S^1,\aut^{<0}(\scr{N}))
    \simeq 
    H^1(S^1,
    \aut^{<0}(\scr{G})).
\end{align*}
Let $[\scr{M}]$ be the class in 
$H^1(S^1, \aut^{<0}(\scr{N}))$
which corresponds to
$[\scr{L},\scr{L}_{\bullet}]$
by the above isomorphism. 
By Theorem \ref{ClMe},
there exists a difference module 
$\scr{M}$ over $(\scr{O}_t(*0),\phi_t)$
which corresponds to the class $[\scr{M}]$. 
Then, by the construction,
we have 
$\RH(\scr{M})\simeq 
(\scr{L},\scr{L}_{\bullet})$,
which implies the essential surjectivity  of $\RH$.
\qed

\subsection{Rank one non-trivial examples}\label{R1}
\subsubsection{A regular singular difference module}
For a complex number $\alpha\in\C\setminus \Z$, 
we consider a 
difference module 
$\scr{B}_{\alpha}\coloneqq (\scr{O}_t(*0),\psi_\alpha )$
defined as 
$\psi_\alpha=(1+\alpha t)\phi_t$.
It is easy to see that 
the formal completion 
$\widehat{\scr{B}}_{\alpha}$
is isomorphic to 
$\widehat{\scr{R}}_{\alpha}$,
where $\alpha$ is regarded as a $(1\times 1)$-matrix.
\begin{proposition}\label{REX}
We have the following description of
$\Per({\scr{B}}_{\alpha}):$
\begin{align*}
    \Per({\scr{B}}_{\alpha})_{|U}=
    \begin{cases}
    \scr{A}_{\per|U} \Gamma(s)/\Gamma(s+\alpha)
    &(e^{\pi\i}\notin U)
    \\
    \scr{A}_{\per|U} 
    (1-u)\Gamma(s)
    /
    (1-e^{2\pi \i\alpha }u)\Gamma(s+\alpha)
    &
    (e^{0}\notin U), 
    \end{cases}
\end{align*}
where $U$ is an open subset in $S^1$, 
the symbol $\Gamma(s)$ denotes the Gamma function,
and we set $s=t^{-1}$
and $u=\exp(2\pi\i s)$ as in $\S \ref{Another}$. 
\end{proposition}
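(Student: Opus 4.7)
The plan is to reduce the computation of $\Per(\scr{B}_\alpha)$ to the already-established $\Per(\scr{O}_t(*0))=\scr{A}_\per$ of Theorem \ref{O} by producing, on each of the two domains, an explicit flat section of $\scr{B}_\alpha$ which is a \emph{unit} in $\scr{A}^{\leqslant 0}$ and intertwines the difference operators.

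First I would set $f_1(s)\coloneqq \Gamma(s)/\Gamma(s+\alpha)$. The functional equation $\Gamma(s+1)=s\,\Gamma(s)$ gives $f_1(s+1)/f_1(s)=s/(s+\alpha)$, equivalent to $(1+\alpha t)\widetilde{\phi}_t(f_1)=f_1$, so $f_1$ is a flat section. Since $\Gamma$ never vanishes, the poles $\{s\in\Z_{\leq 0}\}$ of $\Gamma(s)$ and the zeros $\{s\in -\alpha+\Z_{\leq 0}\}$ of $1/\Gamma(s+\alpha)$ correspond to sequences in $\C^*_t$ accumulating at $t=0$ only from the negative real direction. Hence $f_1$ is holomorphic and nowhere vanishing on a neighborhood in $\widetilde{\C}_t$ of any open $U\subset S^1_t$ with $e^{\pi\i}\notin U$. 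Stirling's asymptotic gives $f_1\sim s^{-\alpha}$ and $1/f_1\sim s^{\alpha}$ uniformly in sectors avoiding the negative real axis of $s$, so both $f_1$ and $f_1^{-1}$ lie in $\scr{A}^{\leqslant 0}(U)$; in particular $f_1$ is a unit in $\scr{A}^{\leqslant 0}(U)$. Multiplication by $f_1$ then furnishes an isomorphism of sheaves of difference modules
\[
\mu_{f_1}\colon \bigl(\widetilde{\scr{O}}_{|U},\widetilde{\phi}_t\bigr)
\simeqto
\bigl(\widetilde{\scr{O}}_{|U},(1+\alpha t)\widetilde{\phi}_t\bigr),
\]
and the unit property gives $f_1\cdot \scr{A}^{\leqslant\frak{a}}_{|U}=\scr{A}^{\leqslant\frak{a}}_{|U}$ for every $\frak{a}\in\scr{I}(U)$. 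Applying $\scr{H}^0\DR_{\leqslant\frak{a}}$ and summing over $\frak{a}$ produces an induced isomorphism $\Per(\scr{O}_t(*0))_{|U}\simeqto \Per(\scr{B}_\alpha)_{|U}$ sending $1$ to $f_1$; by Theorem \ref{O} this yields $\Per(\scr{B}_\alpha)_{|U}=\scr{A}_{\per|U}\cdot f_1$.

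For the case $e^0\notin U$, I would set $f_2\coloneqq (1-u)\Gamma(s)/\bigl[(1-e^{2\pi\i\alpha}u)\Gamma(s+\alpha)\bigr]$. Periodicity of $u$ under $\widetilde{\phi}_t$ shows $f_2$ is flat. The reflection formula $\Gamma(s)\Gamma(1-s)=\pi/\sin(\pi s)$ combined with $1-u=-2\i e^{\i\pi s}\sin(\pi s)$ yields the simplification
\[
f_2 = e^{-\i\pi\alpha}\,\frac{\Gamma(1-s-\alpha)}{\Gamma(1-s)},
\]
from which the poles $\{s\in 1-\alpha+\Z_{\geq 0}\}$ and zeros $\{s\in\Z_{>0}\}$ accumulate at $t=0$ only from the positive real direction. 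Stirling again gives $f_2\sim s^{-\alpha}$ and $f_2^{-1}\sim s^\alpha$ in the appropriate sectors, so $f_2$ is a unit in $\scr{A}^{\leqslant 0}(U)$, and the previous argument gives $\Per(\scr{B}_\alpha)_{|U}=\scr{A}_{\per|U}\cdot f_2$. On the overlap $U\subset S^1_t\setminus\{e^0,e^{\pi\i}\}$, one checks that $f_2/f_1=(1-u)/(1-e^{2\pi\i\alpha}u)$ is a unit in $\scr{A}_\per(U)$ by extracting the common simple pole of $(1-u)$ and $(1-e^{2\pi\i\alpha}u)$ in $\scr{O}_v(*0)$ or $\scr{O}_u(*0)$, so the two descriptions agree as submodules of $\widetilde{\scr{O}}$.

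The main technical point is verifying that $f_1$ (respectively $f_2$) is genuinely a unit in $\scr{A}^{\leqslant 0}$ on the whole domain: this requires the uniform Stirling asymptotic together with a careful localization of the pole and zero sequences of the $\Gamma$-factors in $\C^*_t$, including their accumulation behavior as one approaches the excluded point $e^{\pi\i}$ (respectively $e^0$) from either side of $S^1_t$.
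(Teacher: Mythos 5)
Your proof is correct and follows essentially the same approach as the paper: identify the explicit flat section $\Gamma(s)/\Gamma(s+\alpha)$ via the functional equation, show (Stirling) it is a unit in $\scr{A}^{\leqslant 0}$ on the relevant sector, transport everything to $\scr{O}_t(*0)$ via the multiplication isomorphism, and invoke Theorem~\ref{O}; the case $e^0\notin U$ is handled by the same reflection-formula manipulation as in the paper. Your write-up is somewhat more explicit about why multiplication by a flat unit yields the reduction to Theorem~\ref{O} and about the overlap compatibility, but this is the same underlying argument.
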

\begin{proof}
By the relation $\Gamma(s+1)=s\Gamma(s)$, the equality
\[\psi_\alpha (\Gamma(s)/\Gamma(s+\alpha))
=\Gamma(s)/\Gamma(s+\alpha)\] 
holds, 
which implies that 
$\Gamma(s)/\Gamma(s+\alpha)\in
\widetilde{\DR}({\scr{B}_{\alpha}})(U_+)$ 
for $U_+=S^1\setminus \{e^{\pi\i}\}$. 
By the Stirling formula,
we moreover obtain that 
$\Gamma(s)/\Gamma(s+\alpha)\in 
\DR_{\leqslant0}(\scr{B}_{\alpha})(U_+)$,
whose germ at each point $x\in U_+$ is not in 
$\DR_{<0}(\scr{B}_{\alpha})_x$. 
By Theorem \ref{O},
we obtain the first half of the proposition.
The latter half can be proved in a similar way
by using the reflection formula:
$(1-u)\Gamma(s)=(-2\pi\i)e^{\pi\i s}/\Gamma(1-s)$. 
\end{proof}
\subsubsection{A mild difference module}
Let 
$\scr{E}_\Gamma\coloneqq 
(\scr{O}_t(*0),\psi_\Gamma)$
be a difference module with
$\psi_\Gamma=
\exp(\frak{l}(s+1)
-\frak{l}(s))t\phi_t$ where 
$\frak{l}(s)=s\log s$.
It is not difficult to see
that the formal completion 
$\widehat{\scr{E}}_\Gamma$
is isomorphic to 
$\scr{E}^{\frak{a}_\Gamma}\otimes 
\scr{R}_{-1/2}$
where $\frak{a}_\Gamma(s)=-s$. 

\begin{proposition}
    We have the following description
    of $\Per (\scr{E}_\Gamma)$$:$
    \begin{align*}
        \Per(\scr{E}_\Gamma)_{|U}
        =\begin{cases}
            \mathscr{A}_{\per|U}s^{-s}\Gamma(s) &(e^{\i\pi}\notin U)\\
            \scr{A}_{\per|U}(1-u)s^{-s}\Gamma(s)&
            (e^0\notin U)
        \end{cases}
    \end{align*}
    where the notations except $s^{-s}$
    are defined as in Proposition \ref{REX}
    and we define 
    $s^{-s}=\exp(-s\log s)$.
    Note that  
    the choice of the branch of $log s$
    does not matter in the definition of the modules. 
\end{proposition}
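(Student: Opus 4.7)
The plan is to imitate the proof of Proposition \ref{REX} on each of the two open subsets $U_+ := S^1\setminus\{e^{\i\pi}\}$ and $U_- := S^1\setminus\{e^0\}$: exhibit an explicit horizontal section of $\scr{E}_\Gamma$, determine its growth class via Stirling's formula, and reduce to Theorem \ref{O} by multiplication. For $U_+$ the candidate is $\omega := s^{-s}\Gamma(s)$; horizontality $\psi_\Gamma(\omega)=\omega$ follows from $\Gamma(s+1)=s\Gamma(s)$, $ts=1$ and $\exp(\frak{l}(s+1))=(s+1)^{s+1}$ by direct computation. Stirling's formula $\Gamma(s)\sim\sqrt{2\pi}\,s^{s-1/2}e^{-s}$ valid in $|\arg s|<\pi$ gives $\omega\sim\sqrt{2\pi}\,s^{-1/2}e^{-s}$ on $U_+$, so $\exp(-\frak{a}_\Gamma)\omega\in\scr{A}^{\leqslant 0}_{|U_+}$ and hence $\omega\in\scr{H}^0\DR_{\leqslant\frak{a}_\Gamma}(\scr{E}_\Gamma)(U_+)$. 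Since $\omega^{-1}\sim s^{1/2}e^s/\sqrt{2\pi}$ lies in $\scr{A}^{\leqslant -\frak{a}_\Gamma}_{|U_+}$, multiplication by $\omega$ is an isomorphism $\scr{A}^{\leqslant\frak{b}}\simeqto\scr{A}^{\leqslant\frak{b}+\frak{a}_\Gamma}$; summing over $\frak{b}$ and invoking Theorem \ref{O} gives $\Per(\scr{E}_\Gamma)_{|U_+}=\scr{A}_{\per|U_+}\cdot\omega$.

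For the second case I would use $\omega':=(1-u)s^{-s}\Gamma(s)$ and first apply the reflection formula $\Gamma(s)\Gamma(1-s)=\pi/\sin(\pi s)$ together with the identity $1-e^{2\pi\i s}=-2\i e^{\i\pi s}\sin(\pi s)$ to obtain
\[\omega' = \frac{-2\pi\i\,e^{\i\pi s}\,s^{-s}}{\Gamma(1-s)}.\]
Stirling's formula for $\Gamma(1-s)$ is valid in $|\arg(1-s)|<\pi$, which corresponds precisely to $U_-$ under $s=t^{-1}$. The element $\omega'$ is horizontal by horizontality of $\omega$ and $\widetilde{\phi}_t$-invariance of $u$, and a direct asymptotic analysis places it in $\scr{A}^{\leqslant\frak{a}_\Gamma+2\pi\i n s}$ at each point of $U_-$, where $n\in\{0,1\}$ depends on the connected component. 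The multiplication argument of the first case, applied to $\omega'$ and together with Theorem \ref{O}, then yields $\Per(\scr{E}_\Gamma)_{|U_-}=\scr{A}_{\per|U_-}\cdot\omega'$; the fact that $(1-u)$ is a unit in $\scr{A}_\per$ on every small open subset of $U_-$ ensures that this module is independent of the choice of branch of $\log s$, as remarked in the statement.

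The main obstacle is the asymptotic bookkeeping in the second case. Expanding $(1-s)^{1/2-s}$ in the Stirling expansion of $\Gamma(1-s)$ requires a branch choice $\log(-s)=\log s\pm\i\pi$, which introduces a factor $e^{\mp\i\pi s}$ whose sign depends on the component of $U_-$; combined with the prefactor $e^{\i\pi s}$ this collapses to $1$ or $u$ on the two components, placing $\omega'$ in $\scr{A}^{\leqslant\frak{a}_\Gamma}$ on one side and $\scr{A}^{\leqslant\frak{a}_\Gamma+2\pi\i s}$ on the other. The resulting shift by $2\pi\i s$ is harmless because Definition \ref{Pre-St}(3) together with $u^{\pm 1}\in\scr{A}_\per$ absorbs any $2\pi\i s\Z$-translation of the filtration index into the $\scr{A}_\per$-module structure; once this is correctly tracked, reduction to Theorem \ref{O} via multiplication proceeds as in the first case.
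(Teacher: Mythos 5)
Your proposal is correct and follows exactly the route the paper intends: the paper's own "proof" is the single sentence "The proof is parallel to that for Proposition~\ref{REX} and left to the reader," and what you have written supplies precisely that parallel argument. On $U_+$ you verify $\psi_\Gamma(s^{-s}\Gamma(s))=s^{-s}\Gamma(s)$ directly, locate its growth via Stirling, and then transport everything back to Theorem~\ref{O} through multiplication by the invertible frame --- this is word for word the structure of the paper's proof of Proposition~\ref{REX}. On $U_-$ you replace the Gamma reflection formula used there by its $\scr{E}_\Gamma$ analogue, obtaining $\omega'=(1-u)s^{-s}\Gamma(s)=-2\pi\i\,e^{\i\pi s}s^{-s}/\Gamma(1-s)$ and applying Stirling in the valid sector, again exactly parallel.

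One small bookkeeping remark: $U_-=S^1\setminus\{e^0\}$ is connected, so the phrase ``depends on the connected component of $U_-$'' is a slip. What actually happens is that once you fix a global continuous branch of $\log s$ on $U_-$ (as you must, to define $s^{-s}$ there), the comparison $\log(-s)=\log s\pm\i\pi$ is forced with a single, consistent sign throughout $U_-$; the two signs correspond to the two possible global branch choices, not to two subregions. Consequently $\omega'$ lands in $\scr{A}^{\leqslant\frak{a}_\Gamma}$ or in $\scr{A}^{\leqslant\frak{a}_\Gamma+2\pi\i s}$ uniformly on $U_-$, depending on that global choice. This does not affect the conclusion, for exactly the reason you give: $u^{\pm1}\in\scr{A}_\per$ and condition~(3) of Definition~\ref{Pre-St} make $\scr{A}_{\per|U_-}\cdot\omega'$ independent of any $2\pi\i s\Z$-shift, which is also what underlies the paper's remark that the branch of $\log s$ is immaterial.
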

\begin{proof}
The proof is parallel to that for
Proposition \ref{REX} and left to the reader. 
\end{proof}

\section{Proof of  Theorem \ref{Vanishing}}\label{PRH}
In this section, 
we give a proof of Theorem \ref{Vanishing}
using the method in \cite{Immink}.  
\subsection{Preliminary}
\subsubsection{Reduction}
By Theorem \ref{CReg},
we may assume that 
for any $e^{\i\theta}\in S^1$,
there exists an open neighborhood
$U$ of $e^{\i\theta}$
such that the restriction of the 
difference module
$\scr{M}$ in the statement of the 
Theorem \ref{Vanishing}
to $U$
is
isomorphic to the direct sum of 
the difference module of the form
$\scr{E}^{\frak{a}}\otimes \scr{R}_{G|U}$,
where notations are as in \S \ref{EE}. 
We may also assume 
that $G$
is of the form
$G=\gamma \id_{\C^r}+N$
where $N$ is a nilpotent Jordan matrix
and $\gamma \in \C$.
Hence it reduces to prove the following claim:
\begin{claim}\label{Claim}
    Assume that 
    $ \scr{M}_{|U}=
\scr{E}^{\frak{a}}\otimes \scr{R}_{G|U}$
for $\frak{a}\in \scr{I}(U)$
and $G=\gamma\id_{\C^r}+N$ with 
$\gamma\in\C$ and a nilpotent 
matrix $N\in \End(\C^r)$. 
Then, for any $\bm{f}\in 
\scr{M}_{|U,e^{\i\theta}}$
there exists
$\Lambda_\theta(\bm{f}) \in \scr{M}_{|U,e^{\i\theta}}$
such that  
$\nabla_{\widetilde{\psi}}
\Lambda_\theta(\bm{f})=\bm{f}$.
Moreover,
if $\bm{f}\in (\scr{A}^{< 0}\otimes \scr{M}_{S^1})
_{e^{\i\theta}}$, 
then we have $\Lambda_\theta(\bm{f})\in
(\scr{A}^{< 0}\otimes \scr{M}_{S^1})_{e^{\i\theta}}$. 
\end{claim}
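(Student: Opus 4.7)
The plan is to prove Claim \ref{Claim} at the stalk at $e^{\i\theta}$ by reducing the vector equation $\nabla_{\widetilde{\psi}}\Lambda=\bm{f}$ to a scalar first-order difference equation and inverting the latter by explicit one-sided summation in a direction dictated by the leading terms of $\frak{a}$.

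First, I would triangularize. Since $G=\gamma\id_{\C^r}+N$ with $N$ nilpotent, the matrix $(1+t)^{-G}=(1+t)^{-\gamma}(1+t)^{-N}$ is a scalar times a unipotent matrix, and in a suitable Jordan basis the operator on $\scr{E}^{\frak{a}}\otimes\scr{R}_{G|U}$ takes the form $\widetilde{\psi}=(u_0(s)I+\widetilde{N}(t))\circ\widetilde{\phi}_t$ with
$$
u_0(s)=\exp(\frak{a}(s+1)-\frak{a}(s))(1+t)^{-\gamma},
$$
and $\widetilde{N}(t)$ strictly upper triangular with entries vanishing at $t=0$. Triangular back-substitution then reduces the vector equation $A\,\Lambda(s+1)-\Lambda(s)=\bm{f}(s)$ to a finite chain of scalar equations of the form
$$
u_0(s)F(s+1)-F(s)=g(s),
$$
in which $g$ is built from $\bm{f}$ and from previously-constructed components, and so lies in the same growth sheaf ($\scr{A}^{\leqslant 0}$ or $\scr{A}^{<0}$) as $\bm{f}$.

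Second, iterating this scalar equation yields two formal one-sided summations
$$
F(s)=-\sum_{n\geq 0}\Bigl(\prod_{k=0}^{n-1}u_0(s+k)\Bigr)g(s+n)
\quad\text{or}\quad
F(s)=\sum_{n\geq 1}\Bigl(\prod_{k=1}^{n}u_0(s-k)^{-1}\Bigr)g(s-n),
$$
each of which telescopes to the equation when convergent. Using $\frak{a}(s)=\sum_{\ell=1}^{m}c_\ell s^{\ell/m}$ gives the expansion
$$
\frak{a}(s+1)-\frak{a}(s)=c_m+\tfrac{m-1}{m}c_{m-1}s^{-1/m}+O(s^{-2/m}),
$$
so outside finitely many Stokes directions the sign of $\log|u_0(s+k)|$ is definite for large $k$ along the ray of $s=t^{-1}$ corresponding to $e^{\i\theta}$. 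On the appropriate side, the relevant partial product decays geometrically, the corresponding sum converges on a small sector around $e^{\i\theta}$, and the geometric decay dominates any moderate-growth bound on $g$ and preserves rapid decay, producing $\Lambda_\theta(\bm{f})$ in the required growth class.

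The main obstacle is the treatment of the exceptional Stokes directions, namely those $\theta$ at which the effective leading real part of $\log|u_0|$ along the corresponding ray of $s$ vanishes. There the elementary geometric estimate breaks down and one must invoke the finer estimates of Immink \cite{Immink}, which exploit the subleading terms of $\frak{a}(s+1)-\frak{a}(s)$ together with the fact that the iteration proceeds in integer steps rather than along a continuous flow. Once those estimates are available, they supply convergence of one of the two Ansätze on a sector around every $e^{\i\theta}$ together with the required growth bounds, giving a right inverse to $\nabla_{\widetilde{\psi}}$ that preserves both $\scr{A}^{\leqslant 0}\otimes\scr{M}_{S^1}$ and $\scr{A}^{<0}\otimes\scr{M}_{S^1}$, which is the content of Claim \ref{Claim}.
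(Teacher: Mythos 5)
Your proposal takes a genuinely different route from the paper's proof. The paper keeps the vector form and inverts $\nabla_{\widetilde{\psi}}$ by a contour integral with the N\"orlund--type kernel $\frac{1}{1-e^{2\pi\i(s-\zeta)}}$, namely
$$
\Lambda_\theta(f)(s)=-f(s)+Y(s)\int_{C(s)}\frac{Y(\zeta)^{-1}f(\zeta)}{1-e^{2\pi\i(s-\zeta)}}\,d\zeta,
$$
where $Y(s)=\exp(-\frak{a}(s))s^G$ and $C(s)$ is a ray through $s'=s+\tfrac12$ from a fixed base point $p$, hence lying inside the sector $S_c(\theta;\varepsilon)$. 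The identity $\nabla_{\widetilde{\psi}}\Lambda_\theta(f)=f$ then reduces to a residue computation at $\zeta=s+1$, and the growth bound is a single application of Immink's integral estimate (Lemma \ref{Immink-Estimate}). Your triangular reduction to scalar equations is a harmless reorganization --- the paper's bound \eqref{G<} absorbs the Jordan block of $G$ into a polynomial in $\log|s'/\zeta|$ in essentially the same way --- but the one-sided \emph{discrete} summation in your second step has a genuine gap that the contour integral is designed to avoid.

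The gap: for $\theta\notin\{0,\pi\}$, a representative of the germ $\bm{f}$ (hence of your $g$) lives only on a sector $S_c(\theta;\varepsilon)$ in the $s$-plane that does \emph{not} contain the real direction, so the integer iterates $s\pm j$ leave the sector once $j$ is of the order $|\Im s|\tan\varepsilon$. Beyond that point the summands $g(s\pm j)$ are simply not defined, and neither are $\exp(\frak{a}(s\pm j))$, $(s\pm j)^{1/m}$, $(s\pm j)^G$: no single branch of $\log$ is available on $S_c(\theta;\varepsilon)$ together with all its positive (or negative) integer translates. Geometric decay of the partial product is of no help when the individual terms are not numbers. The two directions where the real axis does lie in the sector are precisely $\theta\in\{0,\pi\}$, and the paper defers exactly those cases to \cite{Immink}*{\S 9.3}, where a direct summation is feasible; for every other $\theta$ the kernel $\frac{1}{1-e^{2\pi\i(s-\zeta)}}$, whose poles at $\zeta-s\in\Z$ encode the discrete difference structure while the contour of integration stays inside the sector, is the essential device, and is also what produces the $-2\pi$ in the arrangeable condition \eqref{mu}. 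To salvage a summation argument you would have to pass to such an interpolated (contour) version of the sum, at which point you have reproduced the paper's construction.
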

The remaining part of the theorem 
is easy and left to the 
reader.  
We shall prove this claim
in the case
$0<\theta<{\pi}$.
The proof in the case $\pi<\theta<2\pi$ 
is similar to that in 
the case above.
The proof in the case $\theta\in\{0,\pi\}$
is essentially given in 
\cite{Immink}*{\S 9.3}.

\subsubsection{Notations}
We shall fix some notations. 
Let $c$ be a point in $\C$. 
Let $\theta$ be real number
and $\varepsilon$ be a positive number. 
Then we set
\begin{align*}
    S_c(\theta;\varepsilon)=\{s\in\C\mid 
    s=c+Re^{\i\vartheta} \text{ for }
    |\theta-\vartheta|<\varepsilon
    \text{ and } R>0\}. 
\end{align*}
For $\bm{f}\in 
\scr{M}_{|U,e^{\i\theta}}$,
using the notation \S \ref{Another},
we take a representative
$f={}^t(f_1,\dots,f_r)$ of $\bm{f}$
defined on 
$S_c(\theta;\varepsilon)$
for sufficiently small $\varepsilon>0$ 
and a suitable point $c\in\C$.
Without loss of generality,
we may assume that $|c|>1$ and that
there exist 
 positive constants $C$ and 
$N$ such that 
\begin{align}\label{Moderate}
    |f(s)|\leq  C |s|^N
\end{align}
for $s\in S_c(\theta;\varepsilon)$,
where we put 
$|f(s)|=\sqrt{\sum_{i=1}^r |f_i(s)|^2}$.
In the case where we have
$\bm{f}\in (\scr{A}^{< 0}\otimes \scr{M}_{S^1})
_{e^{\i\theta}}$,
for any 
positive constant $N'$, 
there exist a point $c'_{N'}\in S_c$
and a constant $C_{N'}$ such that 
we have 
\begin{align}\label{Rapid}
    |f(s)|\leq C_{N'}|s|^{-N'}
\end{align}
for $s\in S_{c_{N'}}(\theta,\varepsilon)$.

We fix the branch of 
$\log s$ on 
$S_c(\theta;\varepsilon)$ 
and set 
$s^{-1/m}\coloneqq \exp(m^{-1}\log t)$. 
Then,
$\frak{a}$
is expressed as 
\begin{align*}
    \frak{a}(s)=\sum_{k=1}^m c_k s^{k/m}
    \quad (c_k\in\C). 
\end{align*}
Taking the shift by $2\pi\i \Z s$ if necessarily, 
we may assume that $c_m$ satisfies
the inequality
\begin{align}\label{mu}
    -2\pi+{R}(c_m,\theta,\varepsilon)
    +\mathrm{Im} (c_m)<0,
\end{align}
where we set 
\begin{align*}
    R(c_m,\theta,\varepsilon)=
    \max\{0,-\mathrm{Re}(c_m)\}
    \max\{\tan^{-1}(\theta-\varepsilon),
    \tan^{-1}(\theta+\varepsilon)\}.
\end{align*}
% %Set $n\coloneqq \max\{ k\mid c_k\neq 0\}$. 
We set
$A(s)=\exp(\frak{a}(s+1)-\frak{a}(s))
(1+s^{-1})^{-G}$.
Then $\scr{M}_{|U}$
is identified with the module
$\left((\scr{A}_{|U}^{\leqslant 0}
)^{\oplus r},
A(s)\widetilde{\phi}_t^{\oplus r} 
\right)$.
We also use the notation
$Y(s)=\exp(-\frak{a}(s))s^{G}$
with $s^G=\exp(G\log s)$,
which is also defined on 
$S_c(\theta;\varepsilon) $.

\subsubsection{Preliminary estimate}
We shall recall an estimate by Immink: 
\begin{lemma}[{\cite{Immink}*{Lemma 8.12}}]
\label{Immink-Estimate}
Let $ S\subset\mathbb{C}$
be a connected open subset 
such that $|{\sf s}|\geq 1$ for ${\sf s}\in S$. 
For $\mu\in\C$, $\nu\in \R$, 
and a holomorphic function
${\Psi}\colon S\to \C$, we set
\begin{align*}
    {\Phi}(\zeta)\coloneqq \mu\zeta
    +\nu\log|\zeta|+\Psi(\zeta),
    \quad \quad (\zeta\in S).
\end{align*}
Assume that 
we have the inequalities
\begin{align}
    &|\Psi(\zeta)|\leq C|\zeta|^{1-\epsilon}&
    (\zeta\in S)\\
    &|\Psi'(\zeta)|\leq C|\zeta|^{-\epsilon}
    &(\zeta\in S)
\end{align}
where $C$ and $\epsilon$ are positive numbers. 
For ${\sf s}\in S $, and $\alpha \in\R$, set 
\begin{align*}
    \ell(\mathsf{s};\alpha)\coloneqq 
    \{\zeta\in \C\mid \zeta= {\sf s}+Te^{\i\alpha},  0<T\}. 
\end{align*}
We also assume that 
there exists a positive constant 
$\delta$ the inequality
    $\mathrm{Re}(\mu e^{\i\alpha})\geq \delta$
    holds. 
Further, let $P$
be a polynomial with positive coefficients.
Then there exists a positive number
$K$ which is fully determined by
the constants $\delta,\nu,C$ 
and $\epsilon$, and the polynomial $P$
such that 
    \begin{align*}
        \int_\ell 
        \left|\exp(\Phi({\sf s})-\Phi(\zeta))
        P(\log|{\sf s}/\zeta|)d\zeta
        \right|\leq K.
    \end{align*}
\end{lemma}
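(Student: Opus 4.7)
The plan is a direct estimate via parametrization of the ray. Writing $\zeta = \mathsf{s} + Te^{\i\alpha}$ with $T \in (0,\infty)$ so that $d\zeta = e^{\i\alpha}\,dT$, I would first compute
\[
\Re(\Phi(\mathsf{s}) - \Phi(\zeta)) = -T\,\Re(\mu e^{\i\alpha}) - \nu\log|\zeta/\mathsf{s}| + \Re(\Psi(\mathsf{s}) - \Psi(\zeta)).
\]
The hypothesis $\Re(\mu e^{\i\alpha}) \geq \delta$ immediately yields the decisive exponential decay $e^{-\delta T}$ in $T$, and the task reduces to controlling the two remaining contributions and showing that after integration in $T$ the result is bounded by a constant depending only on $\delta, \nu, C, \epsilon$ and the coefficients of $P$.

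To handle the perturbation I would integrate $\Psi'$ along the segment from $\mathsf{s}$ to $\zeta$ to obtain
\[
|\Psi(\mathsf{s}) - \Psi(\zeta)| \leq C\int_0^T |\mathsf{s} + te^{\i\alpha}|^{-\epsilon}\,dt.
\]
Since the squared distance $|\mathsf{s}+te^{\i\alpha}|^2$ is a convex quadratic in $t$ whose minimum value lies between $1$ and $|\mathsf{s}|^2$ (the lower bound coming from the hypothesis that the ray stays in $S$), splitting the $t$-range according to the position of this minimum yields a sub-linear bound of the form $C'(1 + T^{1-\epsilon})$. Consequently $\exp(\Re(\Psi(\mathsf{s})-\Psi(\zeta)))$ is absorbed by $e^{-\delta T/2}$ at the cost of a multiplicative constant depending only on $\delta, C, \epsilon$.

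Next, I would bound the polylogarithmic factor $|\mathsf{s}/\zeta|^\nu \cdot P(\log|\mathsf{s}/\zeta|)$ by a finite sum of powers $(|\zeta|/|\mathsf{s}|)^{\pm a}$ with small $a>0$, using the elementary inequality $|\log x|^k \leq c_{k,a}(x^a + x^{-a})$. Writing $|\zeta/\mathsf{s}| = |1 + Te^{\i\alpha}/\mathsf{s}|$ and estimating above and below by the triangle inequality, the integrand is then dominated by $e^{-\delta T/2}$ times positive powers of $(1 + T)$, which integrates to a finite quantity over $(0,\infty)$ depending only on the stated constants.

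The main obstacle will be ensuring the bound on the polylogarithmic factor is genuinely uniform in $\mathsf{s}$, rather than merely in $T$: because $|\zeta|$ can drop to order $1$ when the ray comes near the origin, the ratio $|\mathsf{s}/\zeta|$ can be as large as $|\mathsf{s}|$, and a crude estimate would spoil the independence of $K$ from $\mathsf{s}$. The key observation is that such a ``grazing'' regime occurs only for $T$ in an interval centered at a value of order $|\mathsf{s}|$, where the exponential factor $e^{-\delta T}$ is already of order $e^{-\delta|\mathsf{s}|}$, absorbing any polynomial growth in $|\mathsf{s}|$ arising from the logarithmic factor.
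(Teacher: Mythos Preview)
The paper does not actually prove this lemma: it is quoted verbatim from Immink's monograph \cite{Immink}*{Lemma 8.12} and used as a black box in the estimates of \S5.3. So there is no ``paper's own proof'' to compare against; your proposal supplies an argument where the paper supplies only a citation.

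That said, your outline is sound and would constitute a correct proof. The parametrisation of the ray, the use of $\Re(\mu e^{\i\alpha})\geq\delta$ for the dominant exponential decay, and the integration of $\Psi'$ along the segment are all the right moves. Your claimed bound $|\Psi(\mathsf{s})-\Psi(\zeta)|\leq C'(1+T^{1-\epsilon})$ is indeed uniform in $\mathsf{s}$: the only subtlety is the range $t\in[|\mathsf{s}|/2,\,2|\mathsf{s}|]$, but there the minimum distance $d\geq 1$ (because the ray stays in $S$) gives $|\mathsf{s}+te^{\i\alpha}|\geq\tfrac{1}{\sqrt{2}}(|t-t_0|+1)$, and since $t_0\leq|\mathsf{s}|$ the resulting contribution $(t_0+1)^{1-\epsilon}$ is dominated by $T^{1-\epsilon}$ once $T$ has passed through this range. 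Your final paragraph correctly isolates the only remaining danger---the ratio $|\mathsf{s}/\zeta|$ blowing up when the ray grazes the unit circle---and correctly disposes of it: that regime forces $T\gtrsim|\mathsf{s}|$, so $e^{-\delta T/2}$ kills any polynomial in $|\mathsf{s}|$ coming from the factor $|\mathsf{s}/\zeta|^{\nu}P(\log|\mathsf{s}/\zeta|)$. One small point worth making explicit in a full write-up is that the \emph{length} of the grazing interval is at most of order $|\mathsf{s}|$, which is also absorbed by the exponential.
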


\subsection{Definition of the Splitting}

\subsubsection{Family of paths}
Take a point  
$p\in S_c
(\theta;\varepsilon)$
and $q\in S_p
(\theta;\varepsilon)$ such that the point 
$q+1$ is in $S_p(\theta;\varepsilon)$.
For $s\in S_q(\theta;\varepsilon)$,
set $s'=s+\tfrac{1}{2}$. 
Then we define the family of 
paths
\begin{align*}
    C(s)=
    p+\R_{>0}(s'-p)
    =\{p+T(s'-p)\mid  T> 0\}
\end{align*}
for each $s\in S_q(\theta;\varepsilon)$. 
\subsubsection{Integral operator}
We consider the following integral
\begin{align}
\Lambda_\theta (f)(s)\coloneqq
-f(s)+
    Y(s)
    \int_{{{C}} (s)}
    \frac{Y(\zeta)^{-1}f(\zeta)}
    {1-e^{ 2\pi\i (s-\zeta) }}d\zeta    
\end{align}
for the family of paths $C(s)$, 
$s\in S_q(\theta;\varepsilon)$.
The proof of the following lemma 
will be given 
in the next subsection:
\begin{lemma}\label{Est}
 % Assume that $f$ satisfies the inequality
 % \eqref{Moderate} for some positive constants
 % $C$ and $N$.
  There exist 
  positive constants $K$ and ${N}$ such that 
  the inequality 
   \begin{align}
      \left| Y(s)
       \int_{C(s)}
       \frac{Y(\zeta)^{-1}f(\zeta)}
       {1-e^{2\pi\i(s-\zeta)}}d\zeta\right|
       \leq K|s|^{{N}}
    \end{align} 
  holds for $s\in S_{q}(\theta,\varepsilon)$.
  In particular, the integral is well defined. 
  If moreover $f$ rapid decays,
  then for any positive $N'$ 
  there are positive constants $K',\varepsilon'$
  and a point $c_{N'}\in S_{c}(\theta,\varepsilon)$
  such that the inequality
  \begin{align}\label{rapid2}
      \left| Y(s)
       \int_{C(s)}
       \frac{Y(\zeta)^{-1}f(\zeta)}
       {1-e^{2\pi\i(s-\zeta)}}d\zeta\right|
       \leq K'|s|^{-N'}
    \end{align}
holds for $s\in S_{c_{N'}}(\theta,\varepsilon')$. 

\end{lemma}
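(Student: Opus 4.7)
The plan is to apply Immink's Lemma (Lemma \ref{Immink-Estimate}) to the two pieces of $C(s)$ obtained by splitting at $T=1$, i.e., at $\zeta=s'$. On the outer piece $T\geq 1$ we have $\Im(\zeta-s) = (T-1)\Im(s-p) > 0$ for $s$ deep in $S_q(\theta,\varepsilon)$, so I would rewrite the kernel as $-e^{2\pi\i(\zeta-s)}/(1-e^{2\pi\i(\zeta-s)})$ to keep the new denominator uniformly bounded below. The extra factor $e^{2\pi\i(\zeta-s)}$ combines with $Y(s)Y(\zeta)^{-1} = \exp(\frak{a}(\zeta)-\frak{a}(s))(s/\zeta)^\gamma\exp(N_{\mathrm{nil}}\log(s/\zeta))$ (with $N_{\mathrm{nil}}$ the nilpotent part of $G$) to write the integrand, up to a uniformly bounded factor, as $\exp(\Phi_+(s)-\Phi_+(\zeta))\cdot P(\log(s/\zeta))\cdot f(\zeta)$ with
\begin{align*}
\Phi_+(\zeta) \coloneqq -(c_m + 2\pi\i)\zeta + (\gamma-N)\log\zeta - \sum_{k=1}^{m-1}c_k\zeta^{k/m},
\end{align*}
where $-N\log\zeta$ is inserted to absorb the $|\zeta|^N$ factor from the moderate-growth estimate on $f$, and $P$ is a matrix polynomial of degree at most $r-1$.

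Next, I would apply Immink's Lemma with $\mu_+ = -(c_m+2\pi\i)$ and direction $\alpha = \arg(s'-p)$, which lies in $(\theta-\varepsilon,\theta+\varepsilon)$ for $|s|$ large. The sign hypothesis $\Re(\mu_+ e^{\i\alpha})\geq\delta > 0$ unfolds to $\Re(c_m)\cos\alpha < (\Im(c_m) + 2\pi)\sin\alpha$, which is the content of \eqref{mu} for $\theta\in(0,\pi)$ after possibly shrinking $\varepsilon$. An entirely analogous treatment of the inner segment $T\in(0,1]$, regarded as a truncation of the ray emanating from $s'$ in direction $-e^{\i\alpha}$ with
\begin{align*}
\Phi_-(\zeta) \coloneqq -c_m\zeta + (\gamma-N)\log\zeta - \sum_{k=1}^{m-1}c_k\zeta^{k/m}
\end{align*}
and $\mu_- = -c_m$, requires the complementary inequality $\Re(c_m e^{\i\alpha})\geq\delta>0$, which is the second half of what \eqref{mu} enforces. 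Combining the two Immink bounds and using $|\zeta|^N = |s|^N(|\zeta|/|s|)^N$ with $(|\zeta|/|s|)^N$ absorbed into the shifted $\log\zeta$ term of $\Phi_\pm$ will yield $|Y(s)\int_{C(s)}(\cdot)\,d\zeta|\leq K|s|^N$.

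The rapid-decay refinement \eqref{rapid2} follows by running the same argument with $N$ replaced by $-N'-M$ for $M$ large enough to dominate the residual polynomial contributions of $\Psi_\pm$, $P$, and the bounded factors, after restricting to $S_{c_{N'}}(\theta,\varepsilon')$ where $|f(\zeta)|\leq C_{N'+M}|\zeta|^{-N'-M}$. The main obstacle I expect is the uniform compatibility of the two sign hypotheses: together they pin $\Re(c_m e^{\i\alpha})$ inside the band $(0, 2\pi\sin\alpha)$, a window that is non-empty precisely because $\sin\theta>0$, and the asymmetric definition of $R(c_m,\theta,\varepsilon)$ appearing in \eqref{mu} — which incorporates the worst-case deviation of $\arg(s-p)$ from $\theta$ — is exactly what keeps this containment uniform as $s$ varies over $S_q(\theta,\varepsilon)$.
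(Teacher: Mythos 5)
The overall strategy---split the contour $C(s)$ at $T=1$ into an inner segment $\ell_0$ and an outer ray $\ell_1$, and control the outer ray with Lemma~\ref{Immink-Estimate}---matches the paper, and your sign-corrected form of $\Phi_+$ (with $\mu_+=-(c_m+2\pi\i)$) is the correct one; the paper's display $\bm{\mu}=c_m-2\pi\i$ appears to carry a sign slip relative to the definition $Y(s)=\exp(-\frak{a}(s))s^G$, so your version is the one that actually makes $|\exp(\Phi_+(s')-\Phi_+(\zeta))|$ absorb the factors $|Y(s')Y(\zeta)^{-1}|$, $|\zeta|^N$, and $|e^{-2\pi\i(s'-\zeta)}|$.

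Where you depart from the paper is in the treatment of $I_0$. The paper disposes of the inner segment with a one-line assertion and only invokes Lemma~\ref{Immink-Estimate} for $\ell_1$; you instead extend $\ell_0$ to the ray $s'-\R_{>0}e^{\i\alpha}$, set $\mu_-=-c_m$, and apply Lemma~\ref{Immink-Estimate} a second time. That application requires $\Re(\mu_- e^{\i(\alpha+\pi)})\geq\delta>0$, i.e.\ $\Re(c_m e^{\i\alpha})\geq\delta>0$, and you assert this is ``the second half of what \eqref{mu} enforces''. This is the genuine gap. The only freedom built into \eqref{mu} is the shift $\frak{a}\mapsto\frak{a}+2\pi\i n s$, which moves $\Im(c_m)$ by $2\pi n$ and leaves $\Re(c_m)$ untouched; \eqref{mu} is a single inequality on $\Im(c_m)$ and $R(c_m,\theta,\varepsilon)$ and cannot force $\Re(c_m e^{\i\alpha})$ into the band $(0,2\pi\sin\alpha)$ you describe. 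Concretely, take $c_m=-1$ and $\theta=\pi/4$: then $R(c_m,\theta,\varepsilon)\leq 1$ for small $\varepsilon$, so $-2\pi+R+\Im(c_m)=-2\pi+R<0$ and \eqref{mu} is satisfied, yet $\Re(c_m e^{\i\alpha})=-\cos\alpha<0$ for $\alpha$ near $\pi/4$, so the hypothesis of Lemma~\ref{Immink-Estimate} fails for your $\mu_-$ along the backward ray. The ``window'' argument at the end of your write-up would require a normalization of $\Re(c_m)$ which simply is not available. So the second application of Immink's estimate is not justified as written; if you want to handle $\ell_0$ without case analysis on the sign of $\Re(c_m e^{\i\alpha})$, you need a different bound (and, more to the point, the paper's own brief claim about $I_0$ deserves the same scrutiny, since the direct estimate of $\int_{\ell_0}$ produces the factor $\int_0^1 e^{(T-1)\Re(c_m(s'-p))}\,dT$, which is exponentially large in $|s|$ when $\Re(c_m e^{\i\alpha})<0$; that regime seems to require a different choice of contour or base point $p$, not a cleverer bound).
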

By this lemma, $\Lambda_\theta (f)(s)$
is a holomorphic function on 
$S_q(\theta;\varepsilon)$. 
We shall check that 
this lemma implies 
Claim \ref{Claim}
in the case $0<\theta<\pi$:
\begin{lemma}
We have 
$\nabla_{\widetilde{\psi}}\Lambda_\theta(f)(s)=f(s)$
on $s\in S_q(\theta;\varepsilon)
\cap S_{q+1}(\theta;\varepsilon)$. 
\end{lemma}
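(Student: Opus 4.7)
The plan is to compute $A(s)\Lambda_\theta(f)(s+1) - \Lambda_\theta(f)(s)$ directly, and to reduce the difference of the two integrals that arises to a single closed-contour integral which we then evaluate by the residue theorem.

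Two algebraic ingredients underlie the calculation. The first is the shift identity $A(s)Y(s+1) = Y(s)$, which follows from $Y(s) = \exp(-\frak{a}(s))s^G$, $A(s) = \exp(\frak{a}(s+1) - \frak{a}(s))(1+s^{-1})^{-G}$, and the identity $(1+s^{-1})^{-G}(s+1)^G = s^G$. The second is the $\Z$-periodicity of the kernel, namely $1 - e^{2\pi\i(s+1-\zeta)} = 1 - e^{2\pi\i(s-\zeta)}$. Substituting these into the definition of $\Lambda_\theta$ and using Lemma \ref{Est} to justify the convergence of each integral, one obtains
\begin{equation*}
    \nabla_{\widetilde{\psi}}\Lambda_\theta(f)(s) = f(s) - A(s)f(s+1) + Y(s)\int_{C(s+1)-C(s)}\frac{Y(\zeta)^{-1}f(\zeta)}{1 - e^{2\pi\i(s-\zeta)}}\,d\zeta,
\end{equation*}
where $C(s+1) - C(s)$ is the formal difference of the two rays based at $p$ passing through $s+3/2$ and $s+1/2$ respectively.

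I would then close this formal contour by an arc at infinity inside the wedge $\Omega$ bounded by the two rays, producing a counterclockwise-oriented closed curve. The integrand has simple poles at $\zeta = s+n$, $n\in\Z$. Writing $s-p = re^{\i\phi}$ with $\phi$ near $\theta\in(0,\pi)$, a direct geometric check shows that $\arg(s+n-p)$ lies strictly between $\arg(s+3/2-p)$ and $\arg(s+1/2-p)$ if and only if $n=1$; hence the unique enclosed pole is $\zeta = s+1$, whose residue equals $\tfrac{1}{2\pi\i}Y(s+1)^{-1}f(s+1)$. The residue theorem then gives
\begin{equation*}
    \int_{C(s+1)-C(s)}\frac{Y(\zeta)^{-1}f(\zeta)}{1 - e^{2\pi\i(s-\zeta)}}\,d\zeta = Y(s+1)^{-1}f(s+1).
\end{equation*}
Multiplying by $Y(s)$ and invoking $Y(s)Y(s+1)^{-1} = A(s)$ (the rearrangement of the shift identity above), this term equals $A(s)f(s+1)$, which cancels the middle term and leaves $f(s)$, as required.

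The principal obstacle is to justify that the arc at infinity contributes nothing in the limit. This combines the moderate-growth bound (\ref{Moderate}) on $f$, the polynomial behaviour of $\zeta^{-G}$, the boundedness of the periodic kernel away from its poles, and crucially the condition (\ref{mu}) on $c_m$, which forces the factor $\exp(\frak{a}(\zeta))$ inside $Y(\zeta)^{-1}$ to decay on the angular range adjacent to the wedge. This is precisely the bookkeeping packaged by Lemma \ref{Immink-Estimate}, which I would apply at both ends of the arc to finish the argument.
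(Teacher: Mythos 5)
Your proposal is correct and takes essentially the same approach as the paper: the paper's proof states the identity $\int_{C(s+1)-C(s)}\frac{Y(\zeta)^{-1}f(\zeta)}{1-e^{2\pi\i(s-\zeta)}}\,d\zeta = Y(s+1)^{-1}f(s+1)$ without elaboration, and your residue-theorem argument (closing the wedge at infinity, identifying $\zeta=s+1$ as the unique enclosed pole, and computing its residue) is exactly the justification that the paper leaves implicit. One small caveat on the technical tail of your argument: Lemma \ref{Immink-Estimate} as stated bounds integrals over rays $\ell(\mathsf{s};\alpha)$, not arcs, so the vanishing of the arc contribution is not a literal application of that lemma; the clean way to finish is to observe that both ray integrals converge absolutely by Lemma \ref{Est}, apply the residue theorem to the truncated wedge of radius $R$, and use the exponential decay of $\exp(-\bm{\varphi}(\zeta))$ (coming from the inequality \eqref{mu}) together with the moderate-growth bound \eqref{Moderate} to show the arc integral at radius $R$ tends to zero as $R\to\infty$.
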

\begin{proof}
Note that $A(s)Y(s+1)=Y(s)$. 
Then we have 
\begin{align*}
    \nabla_{\widetilde{\psi}} \Lambda_\theta (f)
    =&-A(s)f(s+1)+A(s)Y(s+1)\int_{C(s+1)}
    \frac{Y(\zeta)^{-1}f(\zeta)}
    {1-e^{2\pi\i(s-\zeta)}}\\
    &+f(s)-Y(s)\int_{C(s)}
    \frac{Y(\zeta)^{-1}f(\zeta)}
    {1-e^{2\pi\i(s-\zeta)}}d\zeta\\
    =&-A(s)f(s+1)+f(s)+A(s)Y(s+1)\int_{C(s+1)-C(s)}\frac{Y(\zeta)^{-1}f(\zeta)}
    {1-e^{2\pi\i(s-\zeta)}}d\zeta\\
    =&-A(s)f(s+1)+f(s)+A(s)Y(s+1)Y(s+1)^{-1}f(s+1)\\
    =&
    f(s). 
\end{align*}
This completes the proof.
\end{proof}
This lemma implies the first part of 
Claim \ref{Claim}.
The last part follows from
\eqref{rapid2}.
% \begin{lemma}
% If $f$ is rapid decay on $S_c()$,
% then $f$
% \end{lemma}
% \begin{proof}
% \end{proof}
\subsection{Estimates}
We shall give a proof of 
the first part of Lemma \ref{Est}. 
The latter part can 
be shown in a similar way
and is left to the reader.
\subsubsection{}
By the definition
of $Y(s)$, the matrix-valued 
function
$Y(s)Y(s')^{-1}$
is bounded on $S_c(\theta;\varepsilon)$,
and hence there is a positive 
constant $K_1$ such that 
\begin{align}\label{Y<}
    |Y(s)Y(\zeta)^{-1}|
    <
    K_1|Y(s')Y(\zeta)^{-1}|
    \quad s\in S_c(\theta;\varepsilon).
\end{align}
Since $\arg(s)$ is bounded on 
$S_c(\theta;\varepsilon)$
and the matrix
$N$ in the sum $G=\gamma\id+N$
is a Jordan nilpotent matrix, 
there is a 
polynomial $P$ with positive coefficients
such that 
\begin{align}\label{G<}
    |(s'/\zeta)^G|\leq 
    |s'/\zeta|^{\mathrm{Re}(\gamma)}
    |P(\log |s'/\zeta|)|,
\end{align}
where $|(s'/\zeta)^G|$ denotes an operator norm. 
We also have 
a positive constant $K_2$,
which is independent of 
$\zeta\in C(s)$
such that 
\begin{align}\label{E}
    |1-e^{2\pi\i (s-\zeta)}|
    \leq
    K_2|e^{-2\pi\i (s'-\zeta)}|
\end{align}
for $s\in S_c(\theta;\varepsilon)$ 
and 
$\zeta\in C(s)$. 
\subsubsection{}
We shall divide 
the path $C(s)$ into two terms
as follows:
\begin{align*}
    \ell_0(s)&=
    \{(p+T(s'-p)\mid  0<T\leq 1\},\\
    \ell_1(s)&=
    \{(p+T(s'-p)\mid   T\geq 1\}.
\end{align*}
We then set 
\begin{align*}
    I_j(s)\coloneqq 
    Y(s)
    \int_{{{\ell}_j} (s)}
    \frac{Y(\zeta)^{-1}f(\zeta)}
    {1-e^{ 2\pi\i (s-\zeta) }}d\zeta
\end{align*}
for $j=0,1$.
It is not difficult to see that 
$I_0(s)$ is of moderate growth
(resp. rapidly decay)
when $f$ has the same property. 
We shall give an estimate on $I_1(s)$. 
Set $\bm{\psi}(s)\coloneqq \frak{a}(s)-c_m s$,
$\bm{\varphi}(\zeta)
\coloneqq \bm{\mu} \zeta+\bm{\nu}
\log \zeta+\bm{\psi}(\zeta)$
where we put $\bm{\mu}=c_m-2\pi\i$,
and $\bm{\nu}=-N+\mathrm{Re}(\gamma)$.
% We also put 
% \begin{align*}
%     I(s)\coloneqq Y(s)
%     \int_{{{C}} (s)}
%     \frac{Y(\zeta)^{-1}f(\zeta)}
%     {1-e^{ 2\pi\i (s-\zeta) }}d\zeta
% \end{align*}
Then, combining 
\eqref{Y<}, \eqref{G<},
and \eqref{E}, and 
the assumption \eqref{Moderate},
there exist positive constants
$K_3$ and $N$ such that 
\begin{align*}
    |I_1(s)|&\leq 
     K_3|s'|^N
    \int_{\ell_1(s)} |
    \exp(\bm{\varphi}(s')-\bm{\varphi}
    (\zeta))P(\log|s'/\zeta|)
    d\zeta| 
\end{align*}
Then we can apply 
Lemma \ref{Immink-Estimate} 
to the integral 
by the inequality \eqref{mu}. \qed
\bibliographystyle{alpha}
\bibliography{Difference}
\end{document}